\documentclass[a4paper,11pt, reqno]{amsart}
\usepackage{cite}

\usepackage{xcolor}
\usepackage{amsmath,amsthm,amscd,amssymb}
\usepackage{latexsym}
\usepackage[colorlinks,citecolor=red,pagebackref,hypertexnames=false]{hyperref}

\theoremstyle{plain}
\newtheorem{theorem}{Theorem}[section]
\newtheorem{lem}[theorem]{Lemma}
\newtheorem{corollary}[theorem]{Corollary}
\newtheorem{proposition}[theorem]{Proposition}

\theoremstyle{definition}
\newtheorem{definition}[theorem]{Definition}

\theoremstyle{remark}
\newtheorem{rem}[theorem]{Remark}

\newtheorem{case[theorem]}{Case}

\def \R{{\mathbb R}}

\def \C{{\mathbb C}}

\newcommand{\dint}{\displaystyle\int}

\def\norm#1.#2.{\lVert#1\rVert_{#2}}

\def\R{\mathbb R}

\def \H{{\mathcal H}}

\title[Sampling and Density Theorems for the Fractional Opdam--Cherednik Transform]{Sampling and Density Theorems for the Fractional Opdam--Cherednik Transform}

\author{Riya Ghosh}
\address{Department of Electrical Engineering, Indian Institute of Technology Bombay, Mumbai 400076, India}
\email{riya74012@gmail.com, 20004493@iitb.ac.in}

\author{Anirudha Poria}
\address{Department of Applied Mathematics, School of Mathematics and Physics, Xi’an Jiaotong-Liverpool University, Suzhou 215123, China}
\email{Anirudha.Poria@xjtlu.edu.cn}

\keywords{Fractional Opdam–Cherednik transform; sampling theorem; Riesz basis; Paley–Wiener space; Landau density conditions; concentration operator}

\subjclass[2020]{Primary 42A85, 42B35; Secondary 94A20, 43A32.}

\date{\today}

\begin{document}
\begin{abstract}
In this paper, we establish sampling and density results for the fractional Opdam--Cherednik transform. Using the Sturm--Liouville structure of Jacobi functions, we construct a Riesz basis associated with the Opdam--Cherednik kernel and derive an explicit sampling formula. The sampling nodes are determined by the zeros of a shifted Jacobi function. We further develop a concentration-operator approach and obtain Landau-type necessary density conditions for sampling and interpolation in fractional Opdam–Cherednik bandlimited spaces. The non-fractional Opdam--Cherednik sampling formula is recovered when $\theta=\pi/2$.
\end{abstract}
\maketitle
\section{Introduction}

Sampling theory is a fundamental topic in signal processing and harmonic analysis concerned with the reconstruction of a function from its values on a discrete set. It plays an important role in telecommunications, signal processing, data conversion, medical imaging, and digital audio and video processing; see, for example, \cite{sha48,sha49,opp97,pro95,lyo11}. The classical Whittaker--Shannon--Kotel'nikov theorem states that a bandlimited function can be reconstructed exactly from uniformly spaced samples; see \cite{zay93}. This fundamental principle has motivated the development of sampling theorems associated with several generalized Fourier transforms. In particular, sampling results have been obtained in the Dunkl setting \cite{var07,osa08} and for the Hankel transform \cite{abr05,hig72}. We refer to \cite{gar00} for a general discussion of orthogonal sampling formulas.

Fractional Fourier analysis provides another important extension of the classical Fourier framework. The fractional Fourier transform allows a continuous transition between different representations of a signal and has been studied extensively both from the mathematical and applied points of view. Early developments can be found in \cite{wie29,nam80,mcb87}, while applications and further developments include quantum mechanics \cite{nam80}, harmonic analysis \cite{zay98}, radar systems \cite{yet03}, digital communications \cite{mar01}, time-frequency analysis \cite{pei07}, and optics and signal processing \cite{oza01}. This has also led to fractional versions of several generalized Fourier transforms, including the fractional Dunkl transform \cite{gha14}, fractional Hankel transform \cite{ker91}, fractional Jacobi transform \cite{sah20}, fractional Stockwell transform \cite{wei21}, fractional Jacobi--Dunkl transform \cite{hao22}, and fractional Opdam--Cherednik transform \cite{bah24}. 

The Opdam--Cherednik transform associated with the Jacobi--Cherednik differential--difference operator is the Fourier transform in the trigonometric Dunkl setting (see \cite{opd95, opd00}). The harmonic analysis related to the Opdam--Cherednik transform differs substantially from the Euclidean Fourier setting: the role of the exponential kernel is played by the Opdam hypergeometric function, which is closely related to Jacobi functions. As the harmonic analysis associated with the fractional Fourier transform and Opdam--Cherednik transform has known remarkable development, it is natural to study the fractional Opdam--Cherednik (FrOC) transform and investigate its applications. The FrOC transform introduces a fractional parameter $\theta\in(0,\pi)$, and its kernel can be written as
$$ \mathcal{G}_{\alpha,\beta}^{\theta}(x,\lambda) = e^{-\frac{i}{2}(x^2+\lambda^2)\cot\theta} G_{\lambda\csc\theta}^{\alpha,\beta}(x), $$
where $G_{\lambda}^{\alpha,\beta}$ is the Opdam hypergeometric function. The FrOC transform is defined by \cite{bah24}
$$ \mathcal H_{\alpha,\beta}^{\theta}f(\lambda) = \int_{\mathbb R} \mathcal G_{\alpha,\beta}^{\theta}(-x,\lambda) f(x)A_{\alpha,\beta}(x)\,dx. $$
For $\theta=\pi/2$, the FrOC transform reduces to the Opdam--Cherednik transform. The inversion and Plancherel properties of this transform provide a natural setting for studying Paley--Wiener spaces and discrete reconstructions.

The main purpose of the present paper is to develop a sampling theory for the FrOC transform and investigate the necessary density of sampling and interpolation sets. This problem is not a direct consequence of the classical sampling theorem. In the present setting, the kernel is not an exponential function and the natural sampling points do not arise from a uniform lattice. Instead, they are determined by the spectral properties of Jacobi functions. Moreover, the fractional parameter introduces both a chirp factor and a rescaling of the spectral variable. Consequently, the construction of an appropriate basis and the identification of the sampling nodes require a separate spectral analysis. For recent results on the sampling theorem in the fractional setting, we refer to \cite{SamplingFrJD}. 

Our first objective is therefore to construct a suitable basis on the interval $(-1,1)$. The Jacobi differential equation allows us to formulate the relevant problem in Sturm--Liouville form. If
$$ \Psi_{\alpha,\beta}(\mu) = \varphi_{\mu}^{\alpha+1,\beta+1}(1), $$
and $0<\mu_1<\mu_2<\cdots$ are its positive zeros, then these parameters are related to the Neumann boundary condition
$$ \big(\varphi_{\mu_j}^{\alpha,\beta}\big)'(1)=0. $$
Using the even and odd parts of the corresponding Sturm--Liouville problem, we construct an orthonormal basis involving Jacobi functions and their derivatives. From this basis, we obtain a Riesz basis generated by the Opdam hypergeometric functions. This structure is then used to derive an explicit sampling formula for the fractional Paley--Wiener space. The sampling points are
$$ s_j=\mu_j\sin\theta, $$
so that the fractional parameter directly changes the location of the sampling nodes. The sampling expansion converges in the Paley--Wiener norm and uniformly on compact subsets. When $\theta=\pi/2$, it reduces to the corresponding sampling formula for the Opdam--Cherednik transform.

A second objective is to determine necessary density conditions for general sampling and interpolation sets. An explicit sampling expansion gives one particular reconstruction set, whereas a density theorem describes how dense any stable sampling set must be and how sparse an interpolation set can be. Landau's density method \cite{Landau67} has also been extended to non-Euclidean transform settings; in particular, Abreu and Bandeira \cite{AbreuLandau12} established necessary density conditions for sampling and interpolation associated with the Hankel
transform. Following that, we introduce the appropriate concentration operators associated with the FrOC transform and study their trace and spectral behaviour. For the spatially bandlimited space, this leads to Landau-type lower and upper density conditions for sampling and interpolation sets, respectively. For $S=[-\Omega,\Omega]$, the critical density is 
$$ \frac{2^{1-2\rho}}{\pi}\Omega\csc\theta. $$
We also consider the spectrally bandlimited Paley--Wiener space arising in the sampling theorem and obtain the corresponding density conclusions under the stated trace-defect condition.

The main contributions of the paper can therefore be summarized as follows:

\begin{enumerate}
\item We introduce the FrOC Paley--Wiener space relevant to the sampling problem and identify the spectral parameters that determine the sampling nodes.

\item Using Sturm--Liouville theory, we construct an orthonormal basis from Jacobi functions and their derivatives and deduce a Riesz basis formed by Opdam hypergeometric functions.

\item We establish an explicit sampling theorem for the FrOC transform, with sampling nodes determined by the zeros of
$\varphi_{\mu}^{\alpha+1,\beta+1}(1).$
\item We develop a concentration-operator approach and establish Landau-type necessary density conditions for sampling and interpolation in FrOC bandlimited spaces.
\end{enumerate}

The paper is organized as follows. In Section \ref{sec2}, we recall the basic properties of Jacobi functions and the Opdam--Cherednik transform that are needed in the sequel, together with the corresponding fractional transform. In Section \ref{sec4}, we develop the Sturm--Liouville basis construction and establish the sampling theorem. Section \ref{Sec5} is devoted to Landau's necessary density conditions for sampling and interpolation, including both the spatially and spectrally bandlimited settings. Finally, we conclude with a brief discussion of the results and possible further directions.

\section{Preliminaries}\label{sec2}

In this section, we give a brief overview of the Jacobi--Cherednik operator and related harmonic analysis. The main references for this section are \cite{ank12,GT,mej14, opd95, opd00, sch08}. However, we will use the same notation as in \cite{por21}.

We recall only the basic facts concerning the Jacobi--Cherednik operator and the Opdam--Cherednik transform that will be used later. We assume throughout that
$$ \alpha\geq\beta\geq-\frac12,\quad \alpha>-\frac12, \quad \rho=\alpha+\beta+1. $$
The Jacobi--Cherednik differential--difference operator is defined by
\[T_{\alpha, \beta} f(x)=\frac{\mathrm{d}}{\mathrm{d} x} f(x)+ \frac{A'_{\alpha, \beta} (x)}{A_{\alpha, \beta} (x)} \left(  \frac{f(x)-f(-x)}{2} \right) - \rho f(-x), \]
where
\begin{align}\label{A_alpha_beta0}
 A_{\alpha, \beta} (x)= (\sinh |x| )^{2 \alpha+1} (\cosh x )^{2 \beta+1}.
\end{align}

For $\lambda\in\mathbb C$, the Opdam hypergeometric function
$G_{\lambda}^{\alpha,\beta}$ is the unique analytic solution of
$$T_{\alpha,\beta}u(x)=i\lambda u(x), \quad u(0)=1. $$
It is related to the Jacobi function by
$$ G_{\lambda}^{\alpha,\beta}(x) = \varphi_{\lambda}^{\alpha,\beta}(x) - \frac{1}{\rho-i\lambda} \frac{d}{dx}\varphi_{\lambda}^{\alpha,\beta}(x), $$
where
$\varphi^{\alpha, \beta}_\lambda (x)={}_2F_1 \left(\frac{\rho+i \lambda}{2}, \frac{\rho-i \lambda}{2} ; \alpha+1; -\sinh^2 x \right) $ is the hypergeometric function. Equivalently,
$$ G_{\lambda}^{\alpha,\beta}(x) = \varphi_{\lambda}^{\alpha,\beta}(x) + \frac{\rho+i\lambda}{4(\alpha+1)} \sinh(2x)\, \varphi_{\lambda}^{\alpha+1,\beta+1}(x).$$
For real $x,\lambda$,
$$\overline{G_{\lambda}^{\alpha,\beta}(x)} = G_{-\lambda}^{\alpha,\beta}(x), $$
and we shall use the estimate
$$|G_{\lambda}^{\alpha,\beta}(x)|\leq 2, \quad x,\lambda\in\mathbb R.$$
For further properties of $G_{\lambda}^{\alpha,\beta}$, we refer to
\cite{ank12,GT,sch08}.
\begin{definition}
Let $\alpha \geq \beta \geq -\frac{1}{2}$ with $\alpha > -\frac{1}{2}$. The Opdam--Cherednik transform $\mathcal{H}_{\alpha, \beta} (f)$ of a function $f \in C_c(\R)$ is defined by
\begin{equation}\label{Helmi1444b} \H_{\alpha, \beta} (f) (\lambda)=\dint_{\R} f(x)\; G^{\alpha, \beta}_\lambda(-x)\; A_{\alpha, \beta} (x) dx \quad \text{for all } \lambda \in \C. \end{equation}
The inverse Opdam--Cherednik transform for a suitable function $g$ on $\R$ is given by
\begin{equation}\label{HHH} \H_{\alpha, \beta}^{-1} (g) (x)= \dint_{\R} g(\lambda)\; G^{\alpha, \beta}_\lambda(x)\; d\sigma_{\alpha, \beta}(\lambda) \quad \text{for all } x \in \R, \end{equation}
where $$d\sigma_{\alpha, \beta}(\lambda)= \left(1- \dfrac{\rho}{i \lambda} \right) \dfrac{d \lambda}{8 \pi |C_{\alpha, \beta}(\lambda)|^2}$$ and
$$C_{\alpha, \beta}(\lambda)= \dfrac{2^{\rho - i \lambda} \Gamma(\alpha+1) \Gamma(i \lambda)}{\Gamma \left(\frac{\rho + i \lambda}{2}\right)\; \Gamma\left(\frac{\alpha - \beta+1+i \lambda}{2}\right)}, \quad \lambda \in \C \setminus i \mathbb{N}.$$
\end{definition}

The OC transform $\mathcal H_{\alpha,\beta}$ extends to an isometric isomorphism
$$ \mathcal H_{\alpha,\beta}: L^2(\mathbb R,A_{\alpha,\beta}) \longrightarrow L^2(\mathbb R,\sigma_{\alpha,\beta}).$$
The Plancherel formula is given by
\begin{equation}\label{eq03}
\dint_{\R} |f(x)|^2 A_{\alpha, \beta}(x) dx=\dint_\R \H_{\alpha, \beta} (f)(\lambda) \overline{\H_{\alpha, \beta} ( \check{f})(-\lambda)} \; d \sigma_{\alpha, \beta} (\lambda),
\end{equation}
where $\check{f}(x):=f(-x)$. In particular, for even $f$, Plancherel formula \eqref{eq03} gives
$$\dint_{\R} |f(x)|^2 A_{\alpha, \beta}(x) dx=\dint_\R |\H_{\alpha, \beta} (f)(\lambda)|^{2} \; \dfrac{d \lambda}{8 \pi |C_{\alpha, \beta}(\lambda)|^2}.$$

\subsection{Fractional Opdam--Cherednik transform}\label{sec3}
Let $0<\theta<\pi$. The fractional Jacobi--Cherednik operator is defined by
$$ T_{\alpha,\beta}^{\theta}f(x) = T_{\alpha,\beta}f(x) +i\cot\theta\,x f(x). $$
It is related to the Jacobi--Cherednik operator by
$$ e^{\frac{i}{2}x^2\cot\theta} \circ T_{\alpha,\beta}^{\theta} \circ e^{-\frac{i}{2}x^2\cot\theta} = T_{\alpha,\beta}. $$
The corresponding fractional Opdam hypergeometric function is
\begin{align}\label{Fr_eq1}
 \mathcal G_{\alpha,\beta}^{\theta}(x,\lambda) = e^{-\frac{i}{2}(x^2+\lambda^2)\cot\theta} G_{\lambda\csc\theta}^{\alpha,\beta}(x),
 \end{align}
which is the unique analytic solution of
$$ T_{\alpha,\beta}^{\theta}f = i\lambda\csc\theta\,f, \quad f(0) = e^{-\frac{i}{2}\lambda^2\cot\theta}. $$
For $\theta=\pi/2$, we recover
$$ \mathcal G_{\alpha,\beta}^{\pi/2}(x,\lambda) = G_{\lambda}^{\alpha,\beta}(x). $$
The FrOC transform of
$f\in L^1(\mathbb R,A_{\alpha,\beta})$ is defined by
$$ \mathcal H_{\alpha,\beta}^{\theta}f(\lambda) = \int_{\mathbb R} \mathcal G_{\alpha,\beta}^{\theta}(-x,\lambda) f(x)A_{\alpha,\beta}(x)\,dx.$$
By \eqref{Fr_eq1},
$$ \mathcal H_{\alpha,\beta}^{\theta}f(\lambda) = e^{-\frac{i}{2}\lambda^2\cot\theta} \mathcal H_{\alpha,\beta} \left( e^{-\frac{i}{2}x^2\cot\theta}f(x) \right) (\lambda\csc\theta).$$
Thus the FrOC transform is obtained from the ordinary Opdam--Cherednik transform by chirp multiplication and a rescaling of the spectral variable. This relation will be used repeatedly in the sequel.

The inversion formula is
$$ f(x) = \int_{\mathbb R} \mathcal H_{\alpha,\beta}^{\theta}f(\lambda) \mathcal G_{\alpha,\beta}^{\theta}(x,\lambda) \,d\sigma_{\alpha,\beta}(\lambda\csc\theta),$$
under the usual integrability assumptions. Moreover,
$\mathcal H_{\alpha,\beta}^{\theta}$ extends to an $L^2$-isomorphism between the spatial and spectral spaces associated with the measures
$$ A_{\alpha,\beta}(x)\,dx \quad\text{and}\quad d\sigma_{\alpha,\beta}(\lambda\csc\theta), $$
respectively. We denote its inverse by
$$ \mathcal T_{\alpha,\beta}^{\theta} = \left(\mathcal H_{\alpha,\beta}^{\theta}\right)^{-1}. $$
Since $\mathcal H_{\alpha,\beta}^{\theta}$ is an isometric isomorphism between the corresponding $L^2$-spaces, it is unitary; hence
$$ \mathcal T_{\alpha,\beta}^{\theta} = \left(\mathcal H_{\alpha,\beta}^{\theta}\right)^{*}. $$
For further properties of the FrOC transform, we refer to \cite{bah24}.

\subsection{Sampling and interpolation sets}
Let $\mathcal X$ be a Hilbert space of functions on $\mathbb R$ for which point evaluations are well defined. A discrete set $\Lambda=\{\lambda_n\}\subset\mathbb R$ is called a sampling set for $\mathcal X$ if there exist constants $A_\Lambda,B_\Lambda>0$ such that
$$ A_\Lambda\|f\|_{\mathcal X}^{2} \le \sum_{n}|f(\lambda_n)|^{2} \le B_\Lambda\|f\|_{\mathcal X}^{2}, \quad f\in\mathcal X. $$
It is called an interpolation set for $\mathcal X$ if, for every sequence $\{a_n\}\in\ell^2$, there exists $f\in\mathcal X$ such that
$$f(\lambda_n)=a_n,\quad n\in \mathbb{Z}.$$

\section{Sampling theorems for the FrOC transform}\label{sec4}
Let $0<\theta<\pi$. We write
$$\mathcal{T}_{\alpha,\beta}^\theta=\left(\mathcal{H}_{\alpha,\beta}^\theta\right)^{-1}.$$
We consider the \textbf{Spectrally Bandlimited Space} or \textbf{Paley--Wiener space} as
\begin{align}\label{defn_PW}
{PW}^{\theta}_{\alpha,\beta}
=\{f\in L^2(\mathbb{R},\sigma_{\alpha,\beta}(\lambda\, \csc{\theta})): \text{ supp}({\mathcal{T}_{\alpha,\beta}^\theta f})\subseteq [-1,1]\}.
\end{align}
For later use, set
\begin{align}\label{Sec4_Eqn1}
 \Psi_{\alpha,\beta}(\mu) = \varphi_{\mu}^{\alpha+1,\beta+1}(1). 
 \end{align}
Since $\varphi_\mu^{\alpha+1,\beta+1}$ is even in $\mu$, the function
$\Psi_{\alpha,\beta}$ is even. Let
$$ 0<\mu_1<\mu_2<\cdots $$
denote its positive real zeros, and put
$$ \mu_{-j}=-\mu_j,\quad s_j=\mu_j\sin\theta, \quad j\in\mathbb Z\setminus\{0\}, $$
together with $s_0=0$. Notice that, by the identity
$$ G_\mu^{\alpha,\beta}(x) = \varphi_\mu^{\alpha,\beta}(x) - \frac{1}{\rho-i\mu} \frac{d}{dx}\varphi_\mu^{\alpha,\beta}(x), $$
we have
\begin{align}\label{Sec4_Eqn2}
\frac{d}{dx}\varphi_\mu^{\alpha,\beta}(x) = - \frac{\mu^2+\rho^2}{4(\alpha+1)} \sinh(2x) \varphi_\mu^{\alpha+1,\beta+1}(x).
\end{align}
Consequently,
\begin{align}\label{Sec4_Eqn3}
\Psi_{\alpha,\beta}(\mu_j)=0 \text{ implies that }\frac{d}{dx} \varphi_{\mu_j}^{\alpha,\beta}(1)=0, \quad j\ne0. 
\end{align}
Thus the nonzero sampling parameters are exactly the real spectral parameters associated with the Neumann condition at $x=1$.

Consider the Sturm–Liouville problem
$$ -\frac{1}{A_{\alpha+1,\beta+1}(x)} \frac{d}{dx} \left( A_{\alpha+1,\beta+1}(x)\frac{du}{dx} \right) =\Lambda u,\quad 0<x<1, $$
with the natural boundary condition at $x=0$ and $u(1)=0$. Its eigenvalues are real, simple, and form an increasing sequence tending to infinity. Since the regular solution is
$$ u(x)=\varphi_\mu^{\alpha+1,\beta+1}(x), \quad \Lambda=\mu^2+(\rho+2)^2, $$
its positive spectral parameters are precisely the positive zeros
$ 0<\mu_1<\mu_2<\cdots $
of $\Psi_{\alpha,\beta}(\mu)=\varphi_\mu^{\alpha+1,\beta+1}(1)$. These zeros are simple.

We first establish the basis structure associated with the spectral parameters $\{\mu_j\}$. This will provide the functional-analytic framework needed for the sampling expansion.
\begin{lem}\label{orthogonal} 
Let $\{\mu_j\}_{j\ge1}$ denote the positive spectral parameters determined by 
$$ \Psi_{\alpha,\beta}(\mu_j) = \varphi_{\mu_j}^{\alpha+1,\beta+1}(1)=0,$$
and set
$$ P_j^{\alpha,\beta} := \int_{-1}^{1} \left|\varphi_{\mu_j}^{\alpha,\beta}(x)\right|^2 A_{\alpha,\beta}(x)\,dx , \quad j\ge1, $$
and
$$ M_0:=\int_{-1}^{1}A_{\alpha,\beta}(x)\,dx. $$
Then
\begin{align}\label{Sec4_Eqn4}
\left\{ \frac{1}{\sqrt{M_0}}, \frac{\varphi_{\mu_j}^{\alpha,\beta}}{\sqrt{P_j^{\alpha,\beta}}}, \frac{(\varphi_{\mu_j}^{\alpha,\beta})'} {\sqrt{(\mu_j^2+\rho^2)P_j^{\alpha,\beta}}} :\ j\ge1 \right\}
\end{align}
is an orthonormal basis of $L^2\big((-1,1),A_{\alpha,\beta}\big).$ Consequently,
$$ \left\{ \frac1{\sqrt{M_0}}, \frac{G_{\mu_j}^{\alpha,\beta}} {\sqrt{P_j^{\alpha,\beta}}}, \frac{G_{-\mu_j}^{\alpha,\beta}} {\sqrt{P_j^{\alpha,\beta}}} :\ j\ge1 \right\} $$
is a Riesz basis of
$L^2\big((-1,1),A_{\alpha,\beta}\big).$
\end{lem}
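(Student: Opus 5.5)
Split $L^2\big((-1,1),A_{\alpha,\beta}\big)$ into even and odd parts. Since $A_{\alpha,\beta}$ is even, this is an orthogonal decomposition $L^2_{e}\oplus L^2_{o}$. Each $\varphi_{\mu}^{\alpha,\beta}$ is even in $x$, and its derivative is odd. So the family \eqref{Sec4_Eqn4} consists of the even functions $1,\varphi_{\mu_j}^{\alpha,\beta}$ and the odd functions $(\varphi_{\mu_j}^{\alpha,\beta})'$, and even/odd orthogonality is automatic. Restricting to $(0,1)$ identifies $L^2_e$ and $L^2_o$ with $L^2((0,1),A_{\alpha,\beta})$ up to a factor $2$. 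The lemma then reduces to two completeness statements on $(0,1)$, one for the even part and one for the odd part.

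\emph{Even part.} Consider the Neumann problem
\[
-\frac{1}{A_{\alpha,\beta}}\Big(A_{\alpha,\beta}u'\Big)'=\Lambda u \ \text{ on } (0,1), \qquad \text{regular at } 0,\ u'(1)=0 .
\]
The regular solution is $\varphi_\mu^{\alpha,\beta}$ with $\Lambda=\mu^2+\rho^2$. By \eqref{Sec4_Eqn2}, the boundary condition $u'(1)=0$ holds iff $\mu^2+\rho^2=0$ (giving $\Lambda=0$, eigenfunction $\varphi_{\pm i\rho}\equiv 1$) or $\Psi_{\alpha,\beta}(\mu)=0$. I will also rule out purely imaginary $\mu$ with $0<|\mu|<\rho$, i.e.\ $0<\Lambda<\rho^2$ not coming from real $\mu_j$. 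For this I use that the Neumann eigenvalues are the values $0$ together with $\Lambda$ such that $\varphi_\mu^{\alpha+1,\beta+1}(1)=0$. These are exactly the Dirichlet eigenvalues of the $(\alpha+1,\beta+1)$ problem shifted by $(\rho+2)^2-\rho^2>0$, and the Dirichlet problem stated above has positive spectral parameters $\mu_j$. So I take the stated fact that its eigenvalues $\Lambda=\mu_j^2+(\rho+2)^2$ are all of the form with real $\mu_j$.

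This problem is a regular/limit-circle singular Sturm--Liouville problem: the endpoint $0$ has $A_{\alpha,\beta}\sim x^{2\alpha+1}$, limit-circle for $\alpha<0$ and limit-point for $\alpha\ge 0$, with the Friedrichs/regular condition. Standard Sturm--Liouville theory then says the normalized eigenfunctions form an orthonormal basis of $L^2((0,1),A_{\alpha,\beta})$. Hence $\{1/\sqrt{M_0},\varphi_{\mu_j}^{\alpha,\beta}/\sqrt{P_j}\}$ is an orthonormal basis of $L^2_e$.

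\emph{Odd part.} Apply the operator $D=d/dx$. Integration by parts, using $u'(1)=0$ and $A_{\alpha,\beta}(0)u'(0)=0$, gives
\[
\int_{-1}^1 \varphi_{\mu_j}'\overline{\varphi_{\mu_k}'}A_{\alpha,\beta}=(\mu_j^2+\rho^2)\int_{-1}^1\varphi_{\mu_j}\overline{\varphi_{\mu_k}}A_{\alpha,\beta}.
\]
This yields the orthonormality and the normalization $(\mu_j^2+\rho^2)P_j$. For completeness, note that $(\varphi_{\mu}^{\alpha,\beta})'=c_\mu\sinh(2x)\varphi_\mu^{\alpha+1,\beta+1}$. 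So $g$ odd is orthogonal to all $(\varphi_{\mu_j})'$ iff $g/\sinh(2x)$ is orthogonal, in $L^2((0,1),A_{\alpha,\beta}\sinh^2(2x))=L^2((0,1),4A_{\alpha+1,\beta+1})$, to all Dirichlet eigenfunctions $\varphi_{\mu_j}^{\alpha+1,\beta+1}$ of the problem stated before the lemma. Those eigenfunctions are complete, so $g=0$. The main technical obstacle is justifying the self-adjoint realizations and completeness at the singular endpoint $0$ (and the absence of extra eigenvalues with $0<\Lambda<\rho^2$). I would handle it by citing Titchmarsh/Weyl theory for Bessel-type singularities, as is standard for Jacobi functions.

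\emph{Riesz basis.} Use the identity in the statement,
\[
G_{\pm\mu_j}=\varphi_{\mu_j}-\frac{1}{\rho\mp i\mu_j}\varphi_{\mu_j}' .
\]
Writing $e_j=\varphi_{\mu_j}/\sqrt{P_j}$ and $o_j=\varphi'_{\mu_j}/\sqrt{(\mu_j^2+\rho^2)P_j}$, we get
\[
G_{\pm\mu_j}/\sqrt{P_j}=e_j-\omega_j^{\pm}o_j, \qquad \omega_j^{\pm}=\frac{\sqrt{\mu_j^2+\rho^2}}{\rho\mp i\mu_j}, \quad |\omega_j^\pm|=1 .
\]
The new system is therefore the image of the orthonormal basis under a block-diagonal operator. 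Its $2\times2$ blocks $\begin{pmatrix}1&1\\-\omega_j^+&-\omega_j^-\end{pmatrix}$ have determinant $\omega_j^+-\omega_j^-$ and entries bounded by $1$. It remains to check that $|\omega_j^+-\omega_j^-|=2\mu_j/\sqrt{\mu_j^2+\rho^2}$ is bounded below. This holds since $\mu_j\ge\mu_1>0$, so the inverse blocks are uniformly bounded. A bounded invertible operator maps an orthonormal basis to a Riesz basis, which concludes the proof.
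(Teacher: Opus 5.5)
Your proposal is correct and follows essentially the same route as the paper: even/odd splitting, the Neumann problem giving $\{1,\varphi_{\mu_j}\}$ on the even part, transfer of completeness of the Dirichlet eigenfunctions $\varphi_{\mu_j}^{\alpha+1,\beta+1}$ to the odd part via multiplication by $\sinh(2x)$ (using $\sinh^2(2x)A_{\alpha,\beta}=4A_{\alpha+1,\beta+1}$), and the uniformly invertible $2\times2$ block-diagonal change of basis for the Riesz-basis claim. You are slightly more careful than the paper in excluding Neumann eigenvalues in $(0,\rho^2)$, which you take from the stated facts; this indeed holds because the Dirichlet eigenvalues of the $(\alpha+1,\beta+1)$ problem exceed $(\rho+2)^2$. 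One harmless slip: the endpoint $0$ is limit-circle for $|\alpha|<1$, not for $\alpha<0$, which does not affect the argument.
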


\begin{proof}
Let us denote
$$ A(x)=A_{\alpha,\beta}(x), \quad \varphi_\mu(x)=\varphi_\mu^{\alpha,\beta}(x). $$
The Jacobi differential equation has the Sturm–Liouville form
\begin{align}\label{Sec4_Eqn6}
\left(A(x)\varphi_\mu'(x)\right)' = -(\mu^2+\rho^2)A(x)\varphi_\mu(x).
\end{align}
By the derivative identity already established in \eqref{Sec4_Eqn2},
\begin{align}\label{Sec4_Eqn6_1}
\varphi_\mu'(x) = -\frac{\mu^2+\rho^2}{4(\alpha+1)} \sinh(2x) \varphi_\mu^{\alpha+1,\beta+1}(x).
\end{align}
Hence
$$ \Psi_{\alpha,\beta}(\mu_j)=0 \quad\Longrightarrow\quad \varphi_{\mu_j}'(1)=0.$$
Thus $\varphi_{\mu_j}$ satisfies the Neumann boundary condition at $x=1$. At the singular endpoint $x=0$, the regular Jacobi solution satisfies
$$ \lim_{x\rightarrow0}A(x)\varphi_{\mu_j}'(x)=0. $$
Consequently, the functions $\varphi_{\mu_j}$ are the nonconstant eigenfunctions of the self-adjoint Sturm–Liouville problem
$$ -\frac1{A(x)} \frac{d}{dx} \left(A(x)\frac{du}{dx}\right) =\lambda u, \quad 0<x<1,$$
with the natural boundary condition at $0$ and $ u'(1)=0. $ The constant function $u_0(x)=1$ is the eigenfunction corresponding to the Sturm–Liouville eigenvalue $0$. Notice that this does not mean that it corresponds to the Jacobi spectral parameter $\mu=0$; the parametrization in \eqref{Sec4_Eqn6} is $\lambda=\mu^2+\rho^2$. By the Sturm–Liouville spectral theorem,
$$ \left\{ 1,\varphi_{\mu_j}:j\ge1 \right\} $$
is a complete orthogonal system in the even subspace of
$L^2((-1,1),A)$. Indeed, if $j\neq k$, applying the Lagrange identity to $\varphi_{\mu_j}$ and $\varphi_{\mu_k}$ gives
$$ (\mu_k^2-\mu_j^2) \int_{-1}^{1} \varphi_{\mu_j}(x)\varphi_{\mu_k}(x)A(x)\,dx = \left[ A(x) \big( \varphi_{\mu_j}'\varphi_{\mu_k} - \varphi_{\mu_j}\varphi_{\mu_k}' \big) \right]_{-1}^{1}. $$
The boundary term vanishes, since
$$ \varphi_{\mu_j}'(\pm1) = \varphi_{\mu_k}'(\pm1)=0. $$
Therefore,
\begin{align}\label{Sec4_eqn6_2}
 \int_{-1}^{1} \varphi_{\mu_j}(x)\varphi_{\mu_k}(x)A(x)\,dx=0, \quad j\neq k. 
 \end{align}

Moreover, integrating \eqref{Sec4_Eqn6} over $[-1,1]$ and using
$\varphi_{\mu_j}'(\pm1)=0$, we obtain
$$ (\mu_j^2+\rho^2) \int_{-1}^{1} \varphi_{\mu_j}(x)A(x)\,dx=0. $$
Since $\mu_j^2+\rho^2>0$,
$$ \int_{-1}^{1} \varphi_{\mu_j}(x)A(x)\,dx=0.$$
Thus every $\varphi_{\mu_j}$ is orthogonal to the constant function. Next, integration by parts gives
\begin{align*} \int_{-1}^{1} |\varphi_{\mu_j}'(x)|^2A(x)\,dx &= -\int_{-1}^{1} \varphi_{\mu_j}(x) \left(A(x)\varphi_{\mu_j}'(x)\right)'dx \\ &= (\mu_j^2+\rho^2) \int_{-1}^{1} |\varphi_{\mu_j}(x)|^2A(x)\,dx. 
\end{align*}
Hence
$$ \int_{-1}^{1} |\varphi_{\mu_j}'(x)|^2A(x)\,dx = (\mu_j^2+\rho^2)P_j^{\alpha,\beta}.$$
Also, for $j\neq k$,
$$\int_{-1}^{1} \varphi_{\mu_j}'(x) \varphi_{\mu_k}'(x)A(x)\,dx = (\mu_j^2+\rho^2) \int_{-1}^{1} \varphi_{\mu_j}(x)\varphi_{\mu_k}(x)A(x)\,dx =0. $$

Thus the derivatives form an orthogonal system. Since
$\varphi_{\mu_j}$ is even, $\varphi_{\mu_j}'$ is odd, so every derivative term is automatically orthogonal to every even term.

It remains to prove completeness in the odd subspace.
From \eqref{Sec4_Eqn6_1},
\begin{align}\label{Sec4_Eqn6_3}
\varphi_{\mu_j}'(x) = -\frac{\mu_j^2+\rho^2}{4(\alpha+1)} \sinh(2x) \varphi_{\mu_j}^{\alpha+1,\beta+1}(x).
\end{align}
Furthermore,
\begin{align}\label{Sec4_Eqn6_4}
A_{\alpha+1,\beta+1}(x) = \frac14\sinh^2(2x)A_{\alpha,\beta}(x).
\end{align}
Consider the multiplication operator
$$ Uh(x) = \frac12\sinh(2x)h(x). $$
By \eqref{Sec4_Eqn6_4},
$$ \|Uh\|_{L^2((-1,1), \, A_{\alpha,\beta})}^2 = \|h\|_{L^2((-1,1), \, A_{\alpha+1,\beta+1})}^2. $$
Conversely, if $f$ belongs to the odd subspace, then $h(x)=\frac{2f(x)}{\sinh{(2x)}}$ is even and
$$ \|h\|_{L^2((-1,1), \, A_{\alpha+1,\beta+1})} = \|f\|_{L^2((-1,1), \,A_{\alpha,\beta})}. $$
Hence, $U$ is onto and a unitary map from the even subspace of $ L^2((-1,1), A_{\alpha+1,\beta+1})$ onto the odd subspace of
$ L^2((-1,1),A_{\alpha,\beta}).$ 

Now the functions
$\varphi_{\mu_j}^{\alpha+1,\beta+1}$, $j\ge1, $
are precisely the eigenfunctions of the corresponding Sturm–Liouville problem with the natural condition at $0$ and the Dirichlet condition
$u(1)=0.$ Indeed,
$ u(1)=0$ implies that $\Psi_{\alpha,\beta}(\mu_j)=0.$
Hence, by the Sturm–Liouville spectral theorem,
$$ \left\{ \varphi_{\mu_j}^{\alpha+1,\beta+1}:j\ge1 \right\} $$
is complete in the relevant even subspace of
$L^2((-1,1),A_{\alpha+1,\beta+1})$. Applying the unitary map $U$, and using \eqref{Sec4_Eqn6_3}, shows that
$$ \left\{ \varphi_{\mu_j}':j\ge1 \right\} $$
is complete in the odd subspace of
$L^2((-1,1),A_{\alpha,\beta})$. Combining the even and odd parts proves that
$$ \left\{ \frac1{\sqrt{M_0}}, \frac{\varphi_{\mu_j}} {\sqrt{P_j^{\alpha,\beta}}}, \frac{\varphi_{\mu_j}'} {\sqrt{(\mu_j^2+\rho^2)P_j^{\alpha,\beta}}} :\ j\ge1 \right\} $$
is an orthonormal basis.
For the Riesz-basis assertion, put
$$ e_j = \frac{\varphi_{\mu_j}} {\sqrt{P_j^{\alpha,\beta}}}, \quad o_j = \frac{\varphi_{\mu_j}'} {\sqrt{(\mu_j^2+\rho^2)P_j^{\alpha,\beta}}}. $$
Using
$$ G_{\mu_j}^{\alpha,\beta} = \varphi_{\mu_j} - \frac{\varphi_{\mu_j}'} {\rho-i\mu_j}, $$
we obtain
$$ \frac{G_{\mu_j}^{\alpha,\beta}} {\sqrt{P_j^{\alpha,\beta}}} = e_j-c_j o_j,$$
where
$$ c_j = \frac{\sqrt{\rho^2+\mu_j^2}} {\rho-i\mu_j} = \frac{\rho+i\mu_j} {\sqrt{\rho^2+\mu_j^2}}, \quad |c_j|=1. $$
Similarly,
$$ \frac{G_{-\mu_j}^{\alpha,\beta}} {\sqrt{P_j^{\alpha,\beta}}} = e_j-\overline{c_j}\,o_j.$$
Thus on the two-dimensional space
$\operatorname{span}\{e_j,o_j\}$, the change of basis is represented by
$$ M_j = \begin{pmatrix} 1&-c_j\\ 1&-\overline{c_j} \end{pmatrix}. $$
Its determinant is
$$ \det M_j = c_j-\overline{c_j} = \frac{2i\mu_j} {\sqrt{\rho^2+\mu_j^2}}. $$
Since $\mu_j\ge\mu_1>0,$
we have
$$ \inf_{j\ge1}|\det M_j| = \frac{2\mu_1}{\sqrt{\rho^2+\mu_1^2}} >0. $$

Moreover, the entries of $M_j$ are uniformly bounded because
$|c_j|=1$. Hence both $M_j$ and $M_j^{-1}$ are uniformly bounded in $j$. Therefore, the block-diagonal transformation sending
$\{e_j,o_j\}_{j\ge1}$
to
$$ \left\{ \frac{G_{\mu_j}^{\alpha,\beta}}{\sqrt{P_j^{\alpha,\beta}}}, \frac{G_{-\mu_j}^{\alpha,\beta}}{\sqrt{P_j^{\alpha,\beta}}} \right\}_{j\ge1} $$
is bounded and boundedly invertible. Since the former, together with $M_0^{-1/2}$, is an orthonormal basis, the latter is a Riesz basis.
This proves the lemma.
\end{proof}

To identify the coefficients in the Riesz-basis expansion explicitly, we next compute the relevant pairings between the Opdam hypergeometric functions.
\begin{lem} Let $\mu,\nu\in\mathbb R$, and define \begin{align}\label{Sec4_Eqn9} \mathcal I(\mu,\nu) := \int_{-1}^{1} G_{\mu}^{\alpha,\beta}(x) G_{\nu}^{\alpha,\beta}(-x) A_{\alpha,\beta}(x)\,dx . \end{align} 
Let 
\[ \kappa_{\alpha,\beta} := \frac{A_{\alpha,\beta}(1)\sinh(2)} {2(\alpha+1)}\]
and $\mu_j\neq0$ be a zero of
$\Psi_{\alpha,\beta}(\mu) = \varphi_{\mu}^{\alpha+1,\beta+1}(1). $
Then, for $\nu\neq\mu_j$,
\begin{align}\label{Sec4_Eqn10}
\mathcal I(\mu_j,\nu)=
i\kappa_{\alpha,\beta}
\varphi_{\mu_j}^{\alpha,\beta}(1)
\frac{(\rho+i\nu)\Psi_{\alpha,\beta}(\nu)}
{\mu_j-\nu}.
\end{align}
At $\nu=\mu_j$,
\begin{align}\label{Sec4_Eqn11}
\mathcal I(\mu_j,\mu_j)=
-i\kappa_{\alpha,\beta}
\varphi_{\mu_j}^{\alpha,\beta}(1)
(\rho+i\mu_j)
\Psi_{\alpha,\beta}'(\mu_j).
\end{align}
Moreover, for every $\nu\in\mathbb R$,
\begin{align}\label{Sec4_Eqn12_thm}
\int_{-1}^{1}
G_{\nu}^{\alpha,\beta}(-x)
A_{\alpha,\beta}(x)\,dx=
\kappa_{\alpha,\beta}
\Psi_{\alpha,\beta}(\nu).
\end{align}
\end{lem}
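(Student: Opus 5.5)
The plan is to split both kernels into even and odd parts and reduce everything to boundary terms at $x=\pm1$ via the Sturm--Liouville form \eqref{Sec4_Eqn6}. Write $\varphi_\mu=\varphi_\mu^{\alpha,\beta}$, $A=A_{\alpha,\beta}$ and $\lambda_\mu=\mu^2+\rho^2$. Since $\varphi_\mu$ is even and $\varphi_\mu'$ is odd, we have
$$G_\mu(x)=\varphi_\mu(x)-\frac{\varphi_\mu'(x)}{\rho-i\mu},\qquad G_\nu(-x)=\varphi_\nu(x)+\frac{\varphi_\nu'(x)}{\rho-i\nu}.$$
Integrating the product over the symmetric interval $(-1,1)$ kills the mixed (odd) terms. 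This leaves
$$\mathcal I(\mu,\nu)=\int_{-1}^1\varphi_\mu\varphi_\nu A\,dx-\frac{1}{(\rho-i\mu)(\rho-i\nu)}\int_{-1}^1\varphi_\mu'\varphi_\nu'A\,dx .$$

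I would start with the last identity \eqref{Sec4_Eqn12_thm}, since it is the simplest instance of the method. The odd part of $G_\nu(-x)$ integrates to zero, so the integral equals $\int_{-1}^1\varphi_\nu A$. By \eqref{Sec4_Eqn6} this is $-\lambda_\nu^{-1}[A\varphi_\nu']_{-1}^{1}=-2\lambda_\nu^{-1}A(1)\varphi_\nu'(1)$, using that $A\varphi_\nu'$ is odd. Inserting \eqref{Sec4_Eqn2} at $x=1$ cancels $\lambda_\nu$ and gives exactly $\kappa_{\alpha,\beta}\Psi_{\alpha,\beta}(\nu)$. This is valid for every real $\nu$ because $\lambda_\nu\ge\rho^2>0$.

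For \eqref{Sec4_Eqn10}, take $\mu=\mu_j$, so that $\varphi_{\mu_j}'(\pm1)=0$. The Lagrange identity, as in the proof of Lemma \ref{orthogonal}, gives
$$(\lambda_\nu-\lambda_{\mu_j})\int_{-1}^1\varphi_{\mu_j}\varphi_\nu A=-2A(1)\varphi_{\mu_j}(1)\varphi_\nu'(1).$$
Integration by parts together with \eqref{Sec4_Eqn6} for $\varphi_\nu$ gives
$$\int_{-1}^1\varphi_{\mu_j}'\varphi_\nu'A=\lambda_\nu\int_{-1}^1\varphi_{\mu_j}\varphi_\nu A+2A(1)\varphi_{\mu_j}(1)\varphi_\nu'(1).$$
Substituting both into the decomposition expresses $\mathcal I(\mu_j,\nu)$ as $A(1)\varphi_{\mu_j}(1)\varphi_\nu'(1)$ times an explicit rational function of $\mu_j,\nu$. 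Replacing $\varphi_\nu'(1)$ by $-\frac{\lambda_\nu\sinh 2}{4(\alpha+1)}\Psi_{\alpha,\beta}(\nu)$ via \eqref{Sec4_Eqn2} then brings in $\kappa_{\alpha,\beta}$.

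The step I expect to be the main obstacle is simplifying this rational function to $i(\rho+i\nu)/(\mu_j-\nu)$. The plan is to use the factorizations $\lambda_\nu=(\rho-i\nu)(\rho+i\nu)$ and $\lambda_\nu-\lambda_{\mu_j}=(\nu-\mu_j)(\nu+\mu_j)$. One then combines the two terms over the common denominator $(\rho-i\mu_j)(\rho-i\nu)(\lambda_\nu-\lambda_{\mu_j})$. The numerator should factor as $(\rho-i\nu)(\mu_j+\nu)$ times a constant, so that the factor $\nu+\mu_j$ cancels and a single pole at $\nu=\mu_j$ remains. Carrying the signs and factors of $2$ through this step is where care is needed.

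Finally, \eqref{Sec4_Eqn11} follows by continuity. The map $\nu\mapsto\mathcal I(\mu_j,\nu)$ is continuous, since the integrand is analytic in $\nu$ and the interval is compact. Since $\Psi_{\alpha,\beta}(\mu_j)=0$, letting $\nu\to\mu_j$ in \eqref{Sec4_Eqn10} gives
$$\frac{\Psi_{\alpha,\beta}(\nu)}{\mu_j-\nu}\to-\Psi_{\alpha,\beta}'(\mu_j),$$
which yields the stated value.
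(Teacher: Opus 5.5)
Your proposal follows the paper's proof essentially step for step. It uses the same even/odd reduction of $\mathcal I(\mu_j,\nu)$ to $R-Q/\bigl((\rho-i\mu_j)(\rho-i\nu)\bigr)$, the Lagrange identity with $\varphi_{\mu_j}'(\pm1)=0$, substitution of \eqref{Sec4_Eqn2} at $x=1$, integration of the Sturm--Liouville equation for \eqref{Sec4_Eqn12_thm}, and the limit $\nu\to\mu_j$ for \eqref{Sec4_Eqn11}. Your explicit justification of continuity in $\nu$ is more careful than the paper's.

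There is one small gap. You obtain $\int_{-1}^1\varphi_{\mu_j}\varphi_\nu A$ by dividing by $\lambda_\nu-\lambda_{\mu_j}=(\nu-\mu_j)(\nu+\mu_j)$, which also vanishes at $\nu=-\mu_j$. Yet \eqref{Sec4_Eqn10} is asserted for every $\nu\neq\mu_j$, and the case $\nu=-\mu_j$ is exactly what Corollary~\ref{cor:biorthogonal} uses for $k=-j$. Cancelling the factor $\nu+\mu_j$ afterwards does not make the derivation valid at that point. The paper treats this case directly: since $Q=(\mu_j^2+\rho^2)R$ for every $\nu$, one gets $\mathcal I(\mu_j,-\mu_j)=R-(\mu_j^2+\rho^2)R/(\mu_j^2+\rho^2)=0$. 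This agrees with the right-hand side because $\Psi_{\alpha,\beta}$ is even. Alternatively, the continuity argument you already give for $\nu\to\mu_j$ applies verbatim as $\nu\to-\mu_j$, where the right-hand side of \eqref{Sec4_Eqn10} is continuous.

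Your predicted factorization at the step you flagged is also slightly off. Before inserting $\varphi_\nu'(1)$, the numerator over your common denominator is a constant multiple of $(\rho-i\mu_j)(\mu_j+\nu)$, not of $(\rho-i\nu)(\mu_j+\nu)$. The denominator factor $\rho-i\nu$ is removed only later, by $\lambda_\nu=(\rho-i\nu)(\rho+i\nu)$ coming from $\varphi_\nu'(1)$.

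You can bypass this bookkeeping as the paper does. In $Q$, integrate by parts the other way, differentiating $A\varphi_{\mu_j}'$ rather than $A\varphi_\nu'$, so the boundary term $[\varphi_\nu A\varphi_{\mu_j}']_{-1}^{1}$ vanishes. This gives $Q=(\mu_j^2+\rho^2)R$ directly. The bracket then becomes $1-\frac{\rho+i\mu_j}{\rho-i\nu}=\frac{-i(\mu_j+\nu)}{\rho-i\nu}$, and the result follows immediately.
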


\begin{proof}
For simplicity, throughout the proof we write
$$ A(x)=A_{\alpha,\beta}(x), \quad \varphi_{\mu}(x)=\varphi_{\mu}^{\alpha,\beta}(x), \quad \Psi(\mu)=\Psi_{\alpha,\beta}(\mu). $$
Since $\varphi_{\nu}$ is even and $\varphi_{\nu}'$ is odd, we have
$$ G_{\nu}^{\alpha,\beta}(-x) = \varphi_{\nu}(x) + \frac{\varphi_{\nu}'(x)} {\rho-i\nu}. $$
Therefore, using
$$ G_{\mu_j}^{\alpha,\beta}(x) = \varphi_{\mu_j}(x) - \frac{\varphi_{\mu_j}'(x)} {\rho-i\mu_j}, $$
and observing that the two mixed terms are odd, we obtain
\begin{align}\label{Sec4_Eqn12}
\mathcal I(\mu_j,\nu)=
R(\mu_j,\nu)-\frac{Q(\mu_j,\nu)}
{(\rho-i\mu_j)(\rho-i\nu)},
\end{align}
where
$$ R(\mu_j,\nu) = \int_{-1}^{1} \varphi_{\mu_j}(x)\varphi_{\nu}(x) A(x)\,dx $$
and
$$ Q(\mu_j,\nu) = \int_{-1}^{1} \varphi_{\mu_j}'(x)\varphi_{\nu}'(x) A(x)\,dx. $$
The Jacobi functions satisfy the differential equation
$$ \big(A(x)\varphi_{\mu}'(x)\big)' = -(\mu^2+\rho^2) A(x)\varphi_{\mu}(x). $$
Applying the Lagrange identity to
$\varphi_{\mu_j}$ and $\varphi_{\nu}$, we obtain
\begin{align}\label{Sec4_Eqn13}
(\nu^2-\mu_j^2)
R(\mu_j,\nu)=
2A(1)
\left[
\varphi_{\mu_j}'(1)\varphi_{\nu}(1)-
\varphi_{\mu_j}(1)\varphi_{\nu}'(1)
\right].
\end{align}
Since $\Psi(\mu_j)=0$, relation \eqref{Sec4_Eqn2} implies
$$ \varphi_{\mu_j}'(1)=0. $$
Hence, whenever $\nu^2\neq\mu_j^2$,
\begin{align}\label{Sec4_Eqn14}
R(\mu_j,\nu)=\frac{
2A(1)\varphi_{\mu_j}(1)\varphi_{\nu}'(1)}{\mu_j^2-\nu^2}.
\end{align}

Furthermore, integration by parts and
$\varphi_{\mu_j}'(\pm1)=0$ give
\begin{align}\label{Sec4_Eqn15}
Q(\mu_j,\nu)=(\mu_j^2+\rho^2)
R(\mu_j,\nu).
\end{align}
Substituting \eqref{Sec4_Eqn14} and \eqref{Sec4_Eqn15} into \eqref{Sec4_Eqn12}, we obtain
$$\mathcal I(\mu_j,\nu)= \frac{ 2A(1)\varphi_{\mu_j}(1)\varphi_{\nu}'(1) } {\mu_j^2-\nu^2} \left[ 1- \frac{\mu_j^2+\rho^2} {(\rho-i\mu_j)(\rho-i\nu)} \right].$$
Since
$$ \mu_j^2+\rho^2 = (\rho-i\mu_j)(\rho+i\mu_j), $$
we have
$$ 1- \frac{\mu_j^2+\rho^2} {(\rho-i\mu_j)(\rho-i\nu)} = -\frac{i(\mu_j+\nu)} {\rho-i\nu}. $$
Thus, for $\nu\neq\pm\mu_j$,
$$ \mathcal I(\mu_j,\nu) = -\frac{ 2iA(1)\varphi_{\mu_j}(1)\varphi_{\nu}'(1) } {(\mu_j-\nu)(\rho-i\nu)}. $$
Using \eqref{Sec4_Eqn2},
$$ \varphi_{\nu}'(1) = - \frac{\nu^2+\rho^2} {4(\alpha+1)} \sinh(2)\Psi(\nu), $$
and the factorization
$$ \nu^2+\rho^2 = (\rho-i\nu)(\rho+i\nu), $$
we obtain
$$ \mathcal I(\mu_j,\nu) = i\kappa_{\alpha,\beta} \varphi_{\mu_j}(1) \frac{(\rho+i\nu)\Psi(\nu)} {\mu_j-\nu}. $$
This proves \eqref{Sec4_Eqn10} for
$\nu\neq\pm\mu_j$. It remains to consider $\nu=-\mu_j$. Since
$\Psi$ is even,
$$ \Psi(-\mu_j)=0, $$
so the right-hand side of \eqref{Sec4_Eqn10} vanishes. On the other hand, from \eqref{Sec4_Eqn12} and \eqref{Sec4_Eqn15},
\begin{align*}
\mathcal I(\mu_j,-\mu_j) &= R(\mu_j,-\mu_j) - \frac{ Q(\mu_j,-\mu_j) }{ (\rho-i\mu_j)(\rho+i\mu_j) } \\ 
&= R(\mu_j,-\mu_j) - \frac{ (\mu_j^2+\rho^2)R(\mu_j,-\mu_j) }{ \mu_j^2+\rho^2 } =0.
\end{align*}
Thus \eqref{Sec4_Eqn10} also holds for
$\nu=-\mu_j$.
Next, letting $\nu\to\mu_j$ in \eqref{Sec4_Eqn10}, and using the simplicity of the zero $\mu_j$, we have
$$ \Psi(\nu) = \Psi'(\mu_j)(\nu-\mu_j) + o(\nu-\mu_j). $$
Therefore,
$$ \mathcal I(\mu_j,\mu_j) = -i\kappa_{\alpha,\beta} \varphi_{\mu_j}(1) (\rho+i\mu_j) \Psi'(\mu_j), $$
which proves \eqref{Sec4_Eqn11}. Finally, since $\varphi_{\nu}'$ is odd,
$$ \int_{-1}^{1} G_{\nu}^{\alpha,\beta}(-x)A(x)\,dx = \int_{-1}^{1} \varphi_{\nu}(x)A(x)\,dx. $$
Integrating the Jacobi equation over $[-1,1]$, we obtain
$$ 2A(1)\varphi_{\nu}'(1) = -(\nu^2+\rho^2) \int_{-1}^{1} \varphi_{\nu}(x)A(x)\,dx. $$
Hence
$$ \int_{-1}^{1} \varphi_{\nu}(x)A(x)\,dx = -\frac{ 2A(1)\varphi_{\nu}'(1) }{ \nu^2+\rho^2 }. $$
Using \eqref{Sec4_Eqn2} once again,
$$ \int_{-1}^{1} G_{\nu}^{\alpha,\beta}(-x) A_{\alpha,\beta}(x)\,dx = \frac{ A_{\alpha,\beta}(1)\sinh(2) }{ 2(\alpha+1) } \Psi_{\alpha,\beta}(\nu), $$
which is precisely \eqref{Sec4_Eqn12_thm}. This completes the proof.
\end{proof}

\begin{corollary}\label{cor:biorthogonal} 
For $j\in\mathbb Z\setminus\{0\}$, set $D_j:=\mathcal I(\mu_j,\mu_j)$.
Then $D_j\neq0$, and
\begin{align}\label{Sec4_biorth1}
\mathcal I(\mu_k,\mu_j)=
D_j\delta_{jk},
\quad j,k\in\mathbb Z\setminus\{0\}.
\end{align}
Moreover,
\begin{align}\label{Sec4_biorth2}
\int_{-1}^{1}
G_{\mu_j}^{\alpha,\beta}(-x)
A_{\alpha,\beta}(x) dx
=0,\quad j\in\mathbb Z\setminus\{0\}.
\end{align}
Consequently, if
$$ g(x) = c_0+\sum_{k\in\mathbb Z\setminus\{0\}} c_kG_{\mu_k}^{\alpha,\beta}(x) $$
is the Riesz-basis expansion of
$g\in L^2((-1,1),A_{\alpha,\beta})$, then
\begin{align}\label{Sec4_coeff}
c_j=\frac{1}{D_j}
\int_{-1}^{1}
g(x)G_{\mu_j}^{\alpha,\beta}(-x)
A_{\alpha,\beta}(x)\,dx,
\quad j\neq0.
\end{align}
\end{corollary}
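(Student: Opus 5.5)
The corollary follows from the previous lemma, applied with the convention $\mu_{-j}=-\mu_j$. Note that $\Psi_{\alpha,\beta}$ is even, so $\Psi_{\alpha,\beta}(\mu_j)=0$ for every $j\in\mathbb Z\setminus\{0\}$. Hence the lemma's hypothesis, that $\mu_j\neq0$ is a zero of $\Psi_{\alpha,\beta}$, holds for negative indices as well. The proof of \eqref{Sec4_Eqn10}--\eqref{Sec4_Eqn11} used only $\Psi(\mu_j)=0$, $\mu_j\neq0$, and simplicity of the zero. Simplicity at $-\mu_j$ follows from evenness: $\Psi'(-\mu_j)=-\Psi'(\mu_j)\neq0$.

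\emph{Biorthogonality.} For $k\neq j$, apply \eqref{Sec4_Eqn10} with $\mu_k$ in place of $\mu_j$ and $\nu=\mu_j\neq\mu_k$. The right-hand side carries the factor $\Psi_{\alpha,\beta}(\mu_j)=0$, so $\mathcal I(\mu_k,\mu_j)=0$; this covers $k=-j$ as well. For $k=j$, formula \eqref{Sec4_Eqn11} gives
$$D_j=-i\kappa_{\alpha,\beta}\varphi_{\mu_j}^{\alpha,\beta}(1)(\rho+i\mu_j)\Psi_{\alpha,\beta}'(\mu_j).$$
The factors are nonzero for the following reasons:
\begin{itemize}
\item $\kappa_{\alpha,\beta}>0$, since $A_{\alpha,\beta}(1)>0$ and $\sinh 2>0$.
\item $\rho+i\mu_j\neq0$, since $\mu_j$ is real and nonzero.
\item $\Psi'_{\alpha,\beta}(\mu_j)\neq0$ by simplicity of the zero.
\item $\varphi_{\mu_j}^{\alpha,\beta}(1)\neq0$: by \eqref{Sec4_Eqn3}, $\varphi_{\mu_j}'(1)=0$. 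If also $\varphi_{\mu_j}(1)=0$, uniqueness for the second-order ODE \eqref{Sec4_Eqn6} at the regular point $x=1$ would force $\varphi_{\mu_j}\equiv0$, contradicting $\varphi_{\mu_j}(0)=1$.
\end{itemize}
This last factor is the only point needing an argument beyond the lemma. Alternatively, one could argue that $D_j$ is a nonzero multiple of $P_j^{\alpha,\beta}$ via \eqref{Sec4_Eqn12} and \eqref{Sec4_Eqn15}, but the ODE uniqueness argument is cleaner.

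\emph{Orthogonality to constants.} Identity \eqref{Sec4_biorth2} is immediate from \eqref{Sec4_Eqn12_thm}: the integral equals $\kappa_{\alpha,\beta}\Psi_{\alpha,\beta}(\mu_j)=0$.

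\emph{Coefficient formula.} By Lemma~\ref{orthogonal}, $g$ has an $L^2$-convergent Riesz-basis expansion $g=c_0+\sum_k c_kG_{\mu_k}^{\alpha,\beta}$. Fix $j\neq0$. The functional
$$h\mapsto\int_{-1}^1 h(x)G_{\mu_j}^{\alpha,\beta}(-x)A_{\alpha,\beta}(x)\,dx$$
is continuous on $L^2((-1,1),A_{\alpha,\beta})$, because $G_{\mu_j}^{\alpha,\beta}$ is bounded and $A_{\alpha,\beta}$ is integrable on $(-1,1)$. We may therefore apply it term by term. The constant term contributes $c_0\cdot0$ by \eqref{Sec4_biorth2}. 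Each term $c_kG_{\mu_k}^{\alpha,\beta}$ contributes $c_k\mathcal I(\mu_k,\mu_j)=c_kD_j\delta_{jk}$. Summing gives $c_jD_j$, and dividing by $D_j\neq0$ yields \eqref{Sec4_coeff}.
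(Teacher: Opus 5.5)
Your proof is correct and follows the paper's argument essentially step for step. You get biorthogonality from the vanishing factor $\Psi_{\alpha,\beta}(\mu_j)$ in \eqref{Sec4_Eqn10}, show $D_j\neq0$ via \eqref{Sec4_Eqn11} together with ODE uniqueness at $x=1$ for $\varphi_{\mu_j}^{\alpha,\beta}(1)\neq0$, obtain \eqref{Sec4_biorth2} from \eqref{Sec4_Eqn12_thm}, and derive the coefficient formula by applying the bounded functional to the Riesz expansion; your explicit handling of negative indices via evenness of $\Psi_{\alpha,\beta}$ (so $\Psi_{\alpha,\beta}'(-\mu_j)=-\Psi_{\alpha,\beta}'(\mu_j)\neq0$) fills in a point the paper leaves implicit.
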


\begin{proof}
Let $j,k\in\mathbb Z\setminus\{0\}$. If $j\neq k$, then $\mu_j\neq\mu_k$. Applying \eqref{Sec4_Eqn10} with $\mu_k$ in place of $\mu_j$ and $\nu=\mu_j$, we obtain
$$ \mathcal I(\mu_k,\mu_j) = i\kappa_{\alpha,\beta} \varphi_{\mu_k}^{\alpha,\beta}(1) \frac{ (\rho+i\mu_j)\Psi_{\alpha,\beta}(\mu_j) }{ \mu_k-\mu_j }. $$
Since $\mu_j$ is a zero of $\Psi_{\alpha,\beta}$, 
$$ \mathcal I(\mu_k,\mu_j)=0, \quad k\neq j. $$
For $k=j$, by definition,
$$ \mathcal I(\mu_j,\mu_j)=D_j. $$
Hence \eqref{Sec4_biorth1} follows. From \eqref{Sec4_Eqn11},
$$ D_j = -i\kappa_{\alpha,\beta} \varphi_{\mu_j}^{\alpha,\beta}(1) (\rho+i\mu_j) \Psi_{\alpha,\beta}'(\mu_j). $$
Since the zero $\mu_j$ is simple,
$\Psi_{\alpha,\beta}'(\mu_j)\neq0.$
Furthermore, $\varphi_{\mu_j}^{\alpha,\beta}(1)\neq0.$ Indeed, by \eqref{Sec4_Eqn3}, $(\varphi_{\mu_j}^{\alpha,\beta})'(1)=0.$
If also $\varphi_{\mu_j}^{\alpha,\beta}(1)=0$, uniqueness for the Jacobi differential equation with the initial data
$$ \varphi_{\mu_j}^{\alpha,\beta}(1) = (\varphi_{\mu_j}^{\alpha,\beta})'(1)=0 $$
would imply
$$ \varphi_{\mu_j}^{\alpha,\beta}\equiv0, $$
which is impossible because
$$ \varphi_{\mu_j}^{\alpha,\beta}(0)=1. $$
Thus $D_j\neq0$.
Next, \eqref{Sec4_Eqn12_thm} gives
$$ \int_{-1}^{1} G_{\mu_j}^{\alpha,\beta}(-x) A_{\alpha,\beta}(x)\,dx = \kappa_{\alpha,\beta} \Psi_{\alpha,\beta}(\mu_j)=0, $$
which proves \eqref{Sec4_biorth2}.
Finally, applying the bounded linear functional
$$ h\longmapsto \int_{-1}^{1} h(x)G_{\mu_j}^{\alpha,\beta}(-x) A_{\alpha,\beta}(x)\,dx $$
to the Riesz-basis expansion of $g$, we obtain
$$\int_{-1}^{1} g(x)G_{\mu_j}^{\alpha,\beta}(-x) A_{\alpha,\beta}(x)\,dx= c_0 \int_{-1}^{1} G_{\mu_j}^{\alpha,\beta}(-x) A_{\alpha,\beta}(x)\,dx+ \sum_{k\neq0} c_k\mathcal I(\mu_k,\mu_j).$$
By \eqref{Sec4_biorth1} and \eqref{Sec4_biorth2}, the right-hand side reduces to $ c_jD_j.$ Since $D_j\neq0$, we obtain
$$ c_j = \frac1{D_j} \int_{-1}^{1} g(x)\,G_{\mu_j}^{\alpha,\beta}(-x) A_{\alpha,\beta}(x)\,dx, $$
which proves \eqref{Sec4_coeff}.
\end{proof}

\begin{theorem}[Sampling Theorem]
Let $ f\in PW^\theta_{\alpha,\beta}.$
Then
\begin{align}\label{Sec4_Eqn16}
f(\lambda) = f(0)S_0^\theta(\lambda) + \sum_{j\in\mathbb Z\setminus\{0\}} f(s_j)S_j^\theta(\lambda),
\end{align}
where
\begin{align}\label{Sec4_Eqn17}
S_0^\theta(\lambda) = e^{-\frac i2\lambda^2\cot\theta} \frac{ \Psi_{\alpha,\beta}(\lambda\csc\theta) }{ \Psi_{\alpha,\beta}(0) }, 
\end{align}
and, for $j\ne0$,
\begin{align}\label{Sec4_Eqn18}
S_j^\theta(\lambda) = e^{-\frac i2(\lambda^2-s_j^2)\cot\theta} \frac{ \lambda\sin\theta }{ s_j(\lambda-s_j) } \frac{ \Psi_{\alpha,\beta}(\lambda\csc\theta) }{ \Psi_{\alpha,\beta}'(\mu_j) }, \quad \mu_j=s_j\csc\theta. 
\end{align}
The value at $\lambda=s_j$ in \eqref{Sec4_Eqn18} is understood by continuity. The interpolation functions satisfy
\begin{align}\label{Sec4_Eqn19}
S_j^\theta(s_k)=\delta_{jk}, \quad j,k\in\mathbb Z.
\end{align}
The series in \eqref{Sec4_Eqn16} converges in the $PW^\theta_{\alpha,\beta}$-norm and uniformly on compact subsets of $\mathbb R$.
\end{theorem}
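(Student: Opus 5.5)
Since $f\in PW^\theta_{\alpha,\beta}$, set $g=\mathcal T^\theta_{\alpha,\beta}f$, which is supported in $[-1,1]$ and lies in $L^2((-1,1),A_{\alpha,\beta})$. By the inversion formula, the value $f(\lambda)$ is an integral of $g$ against the conjugate kernel. We write this pairing in the form used by the FrOC relation $\mathcal H^\theta_{\alpha,\beta}g(\lambda)=e^{-\frac i2\lambda^2\cot\theta}\mathcal H_{\alpha,\beta}\big(e^{-\frac i2x^2\cot\theta}g\big)(\lambda\csc\theta)$. Put $h(x)=e^{-\frac i2x^2\cot\theta}g(x)\in L^2((-1,1),A_{\alpha,\beta})$. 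Then
$$f(\lambda)=e^{-\frac i2\lambda^2\cot\theta}\int_{-1}^1 h(x)\,G^{\alpha,\beta}_{\lambda\csc\theta}(-x)\,A_{\alpha,\beta}(x)\,dx .$$
Expand $h$ in the Riesz basis of Lemma \ref{orthogonal}:
$$h=c_0+\sum_{k\ne0}c_kG^{\alpha,\beta}_{\mu_k},$$
with the series converging in $L^2((-1,1),A_{\alpha,\beta})$. Corollary \ref{cor:biorthogonal} identifies each coefficient $c_j$ ($j\neq0$) as $D_j^{-1}$ times the pairing of $h$ with $G_{\mu_j}(-x)$. That pairing is exactly $e^{\frac i2 s_j^2\cot\theta}f(s_j)$, since $\mu_j=s_j\csc\theta$. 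For $c_0$, pair $h$ with $G_0(-x)$, i.e.\ evaluate at $\lambda=0$. By \eqref{Sec4_Eqn12_thm} and \eqref{Sec4_biorth2} the $k\ne0$ terms contribute $c_k\mathcal I(\mu_k,0)$. This is generally nonzero, so this step needs care; see below.

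Next, for fixed $\lambda$, the map $h\mapsto\int h\,G_{\nu}(-x)A\,dx$ with $\nu=\lambda\csc\theta$ is a bounded functional, because $|G_\nu|\le2$ and $A$ is integrable on $[-1,1]$. Apply it termwise to the expansion. This gives
$$f(\lambda)=e^{-\frac i2\lambda^2\cot\theta}\Big[c_0\kappa\Psi(\nu)+\sum_{k\ne0}c_k\,\mathcal I(\mu_k,\nu)\Big].$$
Formula \eqref{Sec4_Eqn10} gives $\mathcal I(\mu_k,\nu)=i\kappa\varphi_{\mu_k}(1)(\rho+i\nu)\Psi(\nu)/(\mu_k-\nu)$. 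Dividing by $D_k$ from \eqref{Sec4_Eqn11}, the factor $\varphi_{\mu_k}(1)$ cancels, leaving
$$-\frac{(\rho+i\nu)\Psi(\nu)}{(\rho+i\mu_k)\Psi'(\mu_k)(\mu_k-\nu)}.$$
This is not yet of the claimed form $\frac{\nu}{\mu_k(\nu-\mu_k)}\frac{\Psi(\nu)}{\Psi'(\mu_k)}$. The discrepancy is an affine function of $\nu$:
$$\frac{\rho+i\nu}{(\rho+i\mu_k)(\nu-\mu_k)}-\frac{\nu}{\mu_k(\nu-\mu_k)}=\frac{-\rho}{\mu_k(\rho+i\mu_k)},$$
which is independent of $\nu$. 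The leftover terms are therefore multiples of $\Psi(\nu)$. Collected, they form $\Psi(\nu)$ times a constant, and this combines with the $c_0$ term. Evaluating the whole identity at $\lambda=0$ (where $\Psi(0)\ne0$, since $0$ is not a Dirichlet eigenvalue) fixes that total constant as $f(0)/\Psi(0)$. This resolves the $c_0$ issue above and yields $S_0^\theta$ exactly as stated.

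The main obstacle is making this regrouping legitimate. The reorganized sum $\sum_k f(s_j)\,\mathrm{const}_k$ must converge, which follows from $\{f(s_j)/\sqrt{P_j}\}$-type coefficients being in $\ell^2$. I would then check that $\sum_k |D_k|^{-1}|\cdots|/\mu_k$ is finite using growth asymptotics of $\mu_k$ (Sturm–Liouville gives $\mu_k\sim k\pi$) and of $P_k$. If that is delicate, an alternative is to argue directly: the function $F=f-\sum_j f(s_j)S_j^\theta$ lies in $PW^\theta$, and its $\mathcal T$-transform has vanishing coefficients on all $G_{\mu_k}$. Hence it is a constant times $e^{-\frac i2 x^2\cot\theta}$'s chirp partner, so $F$ is a multiple of $S_0^\theta$ determined by $F(0)$.

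The interpolation property \eqref{Sec4_Eqn19} follows by direct substitution. $\Psi(\mu_k)=0$ kills every off-diagonal value. At $\lambda=s_j$, l'Hôpital gives $1$: the chirp factor equals $1$, and $\lambda\sin\theta/s_j\to\sin\theta$ while $\Psi(\lambda\csc\theta)/(\lambda-s_j)\to\csc\theta\,\Psi'(\mu_j)$.

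For convergence, $\mathcal H^\theta$ is unitary and the $S_j^\theta$ are the transforms of the chirped biorthogonal dual functions. Norm convergence in $PW^\theta$ is therefore the image of the $L^2$ convergence of the Riesz expansion of $h$. Uniform convergence on compacts follows from Cauchy–Schwarz: $|F(\lambda)|\le 2\big(\int_{-1}^1A\big)^{1/2}\|\mathcal T F\|$ for every $F\in PW^\theta$, uniformly in $\lambda$.
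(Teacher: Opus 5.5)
Your proposal is correct and is essentially the paper's proof. You chirp $\mathcal T^\theta_{\alpha,\beta}f$, expand it in the Riesz basis $\{1,G^{\alpha,\beta}_{\mu_k}\}$, identify $c_k$ by biorthogonality, apply the bounded functional $h\mapsto\int_{-1}^1 h\,G^{\alpha,\beta}_{\nu}(-x)A_{\alpha,\beta}(x)\,dx$ termwise, and remove $c_0$ by evaluating at $\lambda=0$; your constant discrepancy $-\rho/(\mu_k(\rho+i\mu_k))$ is exactly the paper's simplification of $R_j^\theta(\lambda)-R_j^\theta(0)S_0^\theta(\lambda)$.

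The regrouping you single out as the main obstacle needs no asymptotics of $\mu_k$, $P_k^{\alpha,\beta}$ or $D_k$, and your fallback via $f-\sum_j f(s_j)S_j^\theta$ would be circular anyway. The $\lambda$-independent series you split off equals $-\Psi_{\alpha,\beta}(0)^{-1}\sum_{k\ne0}c_k\,\mathcal I(\mu_k,0)$, which converges because it is the $\nu=0$ functional applied to the convergent Riesz expansion. The same fact shows the sampling partial sums differ from the transformed Riesz partial sums by a tail multiple of $S_0^\theta$ tending to $0$, which gives norm convergence.
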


\begin{proof}
Let $ F=\mathcal T^\theta_{\alpha,\beta}f. $
Then $\operatorname{supp}F\subset[-1,1].$
Define
\begin{align}\label{Sec4_Eqn20}
g(x) = e^{-\frac i2x^2\cot\theta}F(x). 
\end{align}

Using the definition of the FrOC kernel,
\begin{align} \label{Sec4_Eqn21} 
f(\lambda) &= \mathcal H^\theta_{\alpha,\beta}F(\lambda)= \int_{-1}^{1} F(x) G^\theta_{\alpha,\beta}(-x,\lambda) A_{\alpha,\beta}(x)\,dx \nonumber\\ 
&= e^{-\frac i2\lambda^2\cot\theta} \int_{-1}^{1} g(x) G_{\lambda\csc\theta}^{\alpha,\beta}(-x) A_{\alpha,\beta}(x)\,dx. \end{align}
Let $b=\lambda\csc\theta.$ By Lemma \ref{orthogonal}, $g$ admits a convergent expansion with respect to the Riesz system
$\{1,G_{\mu_j}:j\ne0\}$. For $j\ne0$, its coefficient corresponding to $G_{\mu_j}$ is determined by the biorthogonal functional
$$ \int_{-1}^{1} g(x)G_{\mu_j}(-x)A(x)\,dx. $$
But from \eqref{Sec4_Eqn21},
\begin{align}\label{Sec4_Eqn22}
\int_{-1}^{1} g(x)G_{\mu_j}(-x)A(x)\,dx = e^{\frac i2s_j^2\cot\theta}f(s_j). 
\end{align}
Let $D_j=\mathcal{I}(\mu_j,\mu_j).$
Applying \eqref{Sec4_Eqn22} to the Riesz expansion therefore gives initially
\begin{align}\label{Sec4_Eqn23}
f(\lambda) = c_0 e^{-\frac i2\lambda^2\cot\theta} \kappa_{\alpha,\beta}\Psi(b) + \sum_{j\ne0} f(s_j)R_j^\theta(\lambda),
\end{align}
where
$$ R_j^\theta(\lambda) = e^{-\frac i2(\lambda^2-s_j^2)\cot\theta} \frac{\mathcal{I}(\mu_j,b)}{D_j}. $$
Using \eqref{Sec4_Eqn10} and \eqref{Sec4_Eqn11},
\begin{align}\label{Sec4_Eqn24}
R_j^\theta(\lambda) = e^{-\frac i2(\lambda^2-s_j^2)\cot\theta} \frac{ (\rho+ib)\Psi(b) }{ (b-\mu_j) (\rho+i\mu_j) \Psi'(\mu_j) }. 
\end{align}
The constant coefficient $c_0$ may be eliminated by evaluating \eqref{Sec4_Eqn23} at $\lambda=0$. Since $ \Psi(0)\ne0,$ we obtain
\begin{align}\label{Sec4_Eqn25}
f(\lambda) = f(0)S_0^\theta(\lambda) + \sum_{j\ne0} f(s_j) \left[ R_j^\theta(\lambda) - R_j^\theta(0) S_0^\theta(\lambda) \right], 
\end{align}
where $S_0^\theta$ is given by \eqref{Sec4_Eqn17}. Now a direct simplification gives
$$ R_j^\theta(\lambda) - R_j^\theta(0)S_0^\theta(\lambda) = e^{-\frac i2(\lambda^2-s_j^2)\cot\theta} \frac{ b\Psi(b) }{ \mu_j(b-\mu_j)\Psi'(\mu_j) }. $$
Since $ b=\lambda\csc\theta, \quad \mu_j=s_j\csc\theta,$
we have
$\frac{b}{\mu_j(b-\mu_j)} = \frac{\lambda\sin\theta} {s_j(\lambda-s_j)}.$ This proves \eqref{Sec4_Eqn18}.

For $j\ne0$,
$$ S_j^\theta(0)=0. $$
If $k\ne j$, then
$\Psi(\mu_k)=0$, so
$$ S_j^\theta(s_k)=0. $$
Finally,
$$ \Psi(\lambda\csc\theta) = \Psi'(\mu_j) (\lambda-s_j)\csc\theta + o(\lambda-s_j), $$
and therefore
$$ \lim_{\lambda\to s_j} S_j^\theta(\lambda) = 1. $$
Thus \eqref{Sec4_Eqn19} follows.
The Riesz-basis expansion converges in $L^2((-1,1),A_{\alpha,\beta})$. Since multiplication by the chirp in \eqref{Sec4_Eqn20} is unitary and the FrOC transform is an $L^2$-isomorphism, the sampling series converges in the $PW^\theta_{\alpha,\beta}$-norm.

Moreover, for every compact $K\subset\mathbb R$,
$$ |h(\lambda)| \le \|\mathcal T^\theta h\|_{L^2(\R, A_{\alpha,\beta})} \left( \int_{-1}^{1} |G^\theta_{\alpha,\beta}(-x,\lambda)|^2 A_{\alpha,\beta}(x)\,dx \right)^{1/2}, $$
and the second factor is bounded uniformly for $\lambda\in K$. Hence norm convergence implies uniform convergence on compact subsets.
\end{proof}

\begin{rem}
For
$\theta=\frac{\pi}{2},$
the chirp factors disappear, $s_j=\mu_j$, and the formula becomes
$$ f(\lambda) = f(0) \frac{\Psi_{\alpha,\beta}(\lambda)} {\Psi_{\alpha,\beta}(0)} + \sum_{j\ne0} f(\mu_j) \frac{ \lambda }{ \mu_j(\lambda-\mu_j) } \frac{ \Psi_{\alpha,\beta}(\lambda) }{ \Psi_{\alpha,\beta}'(\mu_j) }. $$
This is the corresponding sampling formula for the non-fractional Opdam–Cherednik setting.
\end{rem}

\section{Landau’s necessary density conditions}\label{Sec5}
In this section, we establish the necessary density conditions for sampling and interpolation. Because the FrOC transform is asymmetric, we present the Landau theory for two distinct bandlimited spaces: the spatially bandlimited space $\mathcal{B}_{\alpha,\beta}^\theta(S)$ and the spectrally bandlimited space $\mathbf{PW}_{\alpha,\beta}^\theta(S)$. Our approach is inspired by Landau's concentration-operator method and its adaptation to the Hankel transform by Abreu and Bandeira \cite{AbreuLandau12}.

\subsection{Density Conditions for the Spatially Bandlimited Space \texorpdfstring{$\mathcal{B}_{\alpha,\beta}^\theta(S)$}{B(S)}}
We first consider functions in the spatial domain whose FrOC transform is compactly supported. Let $S\subset \R$ and define the space
$$\mathcal{B}_{\alpha,\beta}^{\theta}(S)=\{f\in L^2(\R, A_{\alpha,\beta}):\, \operatorname{supp} (\mathcal{H}_{\alpha,\beta}^\theta f)\subseteq S\}.$$
Define the spatial restriction operator on $I=[-R,R]$ by
$$(P_I f)(x)=\mathbf 1_{I}(x)f(x)$$
and let the spectral projection on $S=[-\Omega,\Omega]$ by
$$D^\theta_S f=
\mathcal{T}_{\alpha,\beta}^\theta P_S\H_{\alpha,  \beta}^\theta f.$$
Let this spectral concentration operator be 
\begin{align}\label{Concentration_opt_in_B}
\mathcal{L}_{I,S}^\theta = \mathcal{H}_{\alpha,\beta}^\theta D_S^\theta P_I  \mathcal{T}_{\alpha,\beta}^\theta = P_S \left( \mathcal{H}_{\alpha,\beta}^\theta P_I \mathcal{T}_{\alpha,\beta}^\theta \right) P_S.
\end{align}
The eigenvalues $\lambda_k(I,S)$ are the concentration eigenvalues, and consequently
$$\operatorname{Tr}(\mathcal{L}^\theta_{I,S})
=\sum_k\lambda_k(I,S).$$

We begin the density analysis by determining the leading asymptotic behaviour of the trace of the concentration operator.
\begin{lem}\label{trace_Lemma}
Let $\alpha>-\frac{1}{2}$, $\alpha\ge\beta\ge -\frac{1}{2}$, and
$\rho=\alpha+\beta+1$. Then 
$$\operatorname{Tr}(\mathcal{L}^\theta_{I,S})= \frac{2^{1-2\rho} }{\pi} R\,\Omega \csc{\theta} + \operatorname{O}(1).$$
\end{lem}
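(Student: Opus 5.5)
Write the concentration operator as an integral operator on $L^2(S,d\sigma_{\alpha,\beta}(\lambda\csc\theta))$ and compute its trace as the integral of its kernel along the diagonal. By \eqref{Concentration_opt_in_B}, for $F$ supported in $S$,
$$\mathcal L^\theta_{I,S}F(\lambda)=\mathbf 1_S(\lambda)\int_S F(\xi)\,K^\theta(\lambda,\xi)\,d\sigma_{\alpha,\beta}(\xi\csc\theta),$$
where
$$K^\theta(\lambda,\xi)=\int_{-R}^{R}\mathcal G^\theta_{\alpha,\beta}(-x,\lambda)\,\mathcal G^\theta_{\alpha,\beta}(x,\xi)\,A_{\alpha,\beta}(x)\,dx .$$
The operator is positive and trace class, since it is $P_S\mathcal H P_I(P_S\mathcal H P_I)^*$ up to the unitary identification and the kernel is bounded by $|G|\le2$ on compacta. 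Hence
$$\operatorname{Tr}(\mathcal L^\theta_{I,S})=\int_S K^\theta(\lambda,\lambda)\,d\sigma_{\alpha,\beta}(\lambda\csc\theta).$$
On the diagonal the chirp factors $e^{\mp\frac i2(x^2+\lambda^2)\cot\theta}$ cancel. We obtain
$$K^\theta(\lambda,\lambda)=\int_{-R}^R G^{\alpha,\beta}_{b}(-x)G^{\alpha,\beta}_{b}(x)A_{\alpha,\beta}(x)\,dx,\qquad b=\lambda\csc\theta .$$
Substituting $b=\lambda\csc\theta$ turns the trace into the non-fractional quantity
$$\int_{-\Omega\csc\theta}^{\Omega\csc\theta}\int_{-R}^R G_b(-x)G_b(x)A(x)\,dx\,d\sigma_{\alpha,\beta}(b).$$
So $\theta$ enters only through the bandwidth $\Omega\csc\theta$.

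Next expand $G_b(\pm x)=\varphi_b(x)\mp\varphi_b'(x)/(\rho-ib)$. The product is
$$\varphi_b^2-\frac{(\varphi_b')^2}{(\rho-ib)^2},$$
because the cross terms are odd in $x$ and integrate to zero over $[-R,R]$. Combining this with the factor $\left(1-\frac{\rho}{ib}\right)=\frac{ib-\rho}{ib}$ in $d\sigma$ and symmetrizing $b\mapsto -b$, the odd parts in $b$ cancel. This leaves an integral of $\varphi_b^2$ and $(\varphi_b')^2/(b^2+\rho^2)$ against $\frac{db}{8\pi|C(b)|^2}$, with bounded, explicit rational weights in $b$.

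Then insert the large-$x$ asymptotics of Jacobi functions,
$$\varphi_b(x)=e^{-\rho x}\bigl(C(b)e^{ibx}+C(-b)e^{-ibx}\bigr)\bigl(1+O(e^{-2x})\bigr),$$
uniformly in $b$ on compacta, together with $A(x)\sim 2^{-2\rho}e^{2\rho x}$. The derivative behaves like $\varphi_b'\approx(ib-\rho)$ times the first exponential plus $(-ib-\rho)$ times the second. Then $|C(b)|^{-2}A(x)\bigl(\varphi_b^2+|\varphi_b'|^2/(b^2+\rho^2)\bigr)$ equals $2^{-2\rho}\cdot 4$ plus oscillating terms $e^{\pm 2ibx}$ plus terms that are integrable in $x$. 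Integrating the constant part over $x\in[-R,R]$ and $b$ in $[-\Omega\csc\theta,\Omega\csc\theta]$ against $\frac{1}{8\pi}$ produces the main term $\frac{2^{1-2\rho}}{\pi}R\,\Omega\csc\theta$. The exact numerical constant has to be matched carefully through this bookkeeping.

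The main obstacle is showing the error is $O(1)$ rather than $o(R)$. Three contributions must be controlled:
\begin{itemize}
\item the compact region $x\in[-x_0,x_0]$, which is bounded using $|G|\le2$ and compactness of $S$;
\item the exponentially decaying corrections in the asymptotics, whose integral in $x$ over $[x_0,\infty)$ is finite;
\item the oscillatory terms, which are handled by first integrating in $x$ to get $\frac{\sin(2bR)}{b}$-type expressions.
\end{itemize}
The $b$-integrals of these last expressions against the smooth weight $|C(b)|^{-2}C(\pm b)^2$ are bounded uniformly in $R$. Near $b=0$ this needs $|C(b)|^{-2}=O(b^2)$, which removes the apparent singularity. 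At the endpoints one integrates by parts. I would first try to make the uniform asymptotics explicit via the Harish-Chandra expansion $\varphi_b=C(b)\Phi_b+C(-b)\Phi_{-b}$ on $x\ge x_0$, with $\Phi_b$ a convergent series in $e^{-2x}$.
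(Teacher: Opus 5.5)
Your proposal is correct and follows essentially the same route as the paper. Both write the trace as the diagonal integral of the kernel and discard the parts odd in $x$ and in $b$, which leaves $\varphi_b^2+(\varphi_b')^2/(b^2+\rho^2)$ integrated against $\frac{db}{8\pi|C_{\alpha,\beta}(b)|^2}$ on $[-\Omega\csc\theta,\Omega\csc\theta]$. The Harish--Chandra asymptotics then give the constant $4\cdot 2^{-2\rho}$. The oscillatory terms are bounded by a Dirichlet-integral ($\operatorname{Si}$) estimate, using that $C_{\alpha,\beta}(b)/C_{\alpha,\beta}(-b)$ is smooth and unimodular.

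The differences are minor:
\begin{itemize}
\item You start from the diagonal integrand $G_b(-x)G_b(x)$ dictated by the definitions, whereas the paper uses $|G_b(x)|^2$. The two agree once combined with the factor $1-\frac{\rho}{ib}$ and symmetrized in $b$. Note that the chirps cancel in your version only if the inverse kernel carries the conjugate chirp $e^{+\frac i2(x^2+\xi^2)\cot\theta}$, which is what inverting $\mathcal{H}^\theta_{\alpha,\beta}$ actually yields. It is not $\mathcal{G}^\theta_{\alpha,\beta}(x,\xi)$ as literally written in your kernel formula.
\item You insert the asymptotics into $(\varphi_b')^2$ directly. The paper instead first integrates by parts, trading that term for $(b^2+\rho^2)\int A\varphi_b^2$ plus a bounded boundary term at $x=R$.
\end{itemize}
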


\begin{proof}
 The spectral concentration operator $\mathcal{L}^\theta_{I,S}$ acts on a function $f \in L^2(S, \sigma_{\alpha,\beta}(\lambda \csc \theta))$ and it has an integral kernel of the form
$$(\mathcal{L}^\theta_{I,S} f)(\lambda) = \int_{S} \mathcal{K}_R^\theta(\lambda, \eta) f(\eta) d\sigma_{\alpha,\beta}(\eta \csc \theta) \quad \text{for } \lambda \in S$$
where
$$K_{\alpha,\beta}^\theta(\lambda,\eta)= \int_{I} \mathcal{G}_{\alpha,\beta}^{\theta}(x,\lambda)\, \overline{\mathcal{G}_{\alpha,\beta}^{\theta}(x,\eta)} \,A_{\alpha,\beta}(x)\,dx.$$
Consequently,
\begin{align}\label{trace_def}
\operatorname{Tr}(\mathcal{L}_{I,S})&=
\int_{S}K_{\alpha,\beta}^\theta(\lambda,\lambda)
\, d\sigma_{\alpha,\beta}(\lambda \csc \theta)\nonumber\\
&=\int_{S} \left( \int_{I} |\mathcal{G}_{\alpha,\beta}^{\theta}(x,\lambda)|^2 A_{\alpha,\beta}(x) dx \right) d\sigma_{\alpha,\beta}(\lambda \csc \theta).
\end{align}
Let $\mu=\lambda\csc{\theta}$ and $W=\Omega\csc{\theta}$. Since
$$\mathcal{G}_{\alpha,\beta}^{\theta}(x,\lambda) = e^{-\frac{i}{2}(x^2+\lambda^2)\cot\theta} G_{\mu}^{\alpha,\beta}(x),$$
we have
$$I_R(\mu):=\int_{-R}^{R} |\mathcal{G}_{\alpha,\beta}^{\theta}(x,\lambda)|^2 A_{\alpha,\beta}(x)~dx=\int_{-R}^{R} |G_\mu^{\alpha,\beta}(x)|^2 A_{\alpha,\beta}(x) dx.$$
Using the definition $G_\mu^{\alpha,\beta}(x) = \varphi_\mu^{\alpha,\beta}(x) - \frac{1}{\rho-i\mu} \frac{d}{dx}\varphi^{\alpha,\beta}_\mu(x)$, we evaluate 
\begin{align}\label{abs(G_mu)2_calculation}
|G_\mu^{\alpha,\beta}(x)|^2
  &=\left(\varphi_\mu^{\alpha,\beta}(x)\right)^2+\frac{\left(\frac{d}{dx}\varphi_\mu^{\alpha,\beta}(x)\right)^2}{\rho^2+\mu^2}-\frac{2\rho}{\rho^2+\mu^2}\varphi_\mu^{\alpha,\beta}(x)\,\frac{d}{dx}\varphi_\mu^{\alpha,\beta}(x).
  \end{align}
Since $A_{\alpha,\beta}$ and
$\varphi_\mu^{\alpha,\beta}$ are even while $\frac{d}{dx}\varphi_\mu^{\alpha,\beta}$ is odd, the last term is odd. Consequently, its integral over $[-R, R]$ vanishes. Thus
\begin{align}
I_R(\mu)= 2 \int_{0}^{R} \left( \varphi_\mu(x)^2 + \frac{\left(\frac{d}{dx}\varphi_\mu^{\alpha,\beta}(x)\right)^2}{\rho^2+\mu^2} \right) A_{\alpha,\beta}(x) dx
\end{align}
The Jacobi equation has the Sturm–Liouville form
\begin{align}\label{Strum_Liouville}
\left(A_{\alpha,\beta}(x)\,\frac{d}{dx}
\varphi_\lambda^{\alpha,\beta}(x)\right)^\prime&=
-(\lambda^2+\rho^2)
A_{\alpha,\beta}(x)
\varphi_\lambda^{\alpha,\beta}(x).
\end{align}
Using integration by parts, we obtain
\begin{align*}
\int_0^R
A_{\alpha,\beta}(x)\left(\frac{d}{dx}\varphi_\mu^{\alpha,\beta}(x)\right)^2dx&=
A_{\alpha,\beta}(R)
\varphi_\mu^{\alpha,\beta}(R)\,
\left.\frac{d}{dx}\right|_{x=R}\varphi_\mu^{\alpha,\beta}(x)\\
&\hspace{3cm}+(\mu^2+\rho^2)
\int_0^R
A_{\alpha,\beta}(x)
\left(\varphi_\mu^{\alpha,\beta}(x)\right)^2dx.
\end{align*}
Therefore,
\begin{align}\label{I(R)}
I_R(\mu)=
4\int_0^R
A_{\alpha,\beta}(x)
\left(\varphi_\mu^{\alpha,\beta}(x)\right)^2dx
+
\frac{2}
{\rho^2+\mu^2}A_{\alpha,\beta}(R)
\varphi_\mu^{\alpha,\beta}(R)\, 
\left.\frac{d}{dx}\right|_{x=R}\varphi_\mu^{\alpha,\beta}(x).
\end{align}
The OC function $A_{\alpha,\beta}(R) = (\sinh R)^{2\alpha+1}(\cosh R)^{2\beta+1}$ expands as
  \begin{align}\label{Assymptotic A}
  A_{\alpha,\beta}(R) = 2^{-2\rho} e^{2\rho R} (1 + O(e^{-2R})).
  \end{align}
  
  For real $\mu$,
$C_{\alpha,\beta}(-\mu) = \overline{C_{\alpha,\beta}(\mu)}.$
Although $C_{\alpha,\beta}(\mu)$ has a simple pole at $\mu=0$, the pole disappears after multiplication by the Plancherel density. To make this cancellation explicit, define, for $\mu\neq0$,
$$ r_{\alpha,\beta}(\mu) := \frac{C_{\alpha,\beta}(\mu)} {C_{\alpha,\beta}(-\mu)}.$$
Since $C_{\alpha,\beta}$ has a simple pole at the origin,
$$ \lim_{\mu\to0} r_{\alpha,\beta}(\mu)=-1. $$
Thus $r_{\alpha,\beta}$ extends smoothly to $[-W,W]$ by setting $ r_{\alpha,\beta}(0)=-1.$
Moreover,
$$ |r_{\alpha,\beta}(\mu)|=1, \quad \mu\in\mathbb R.$$
Fix $R_0>0$ sufficiently large so that the Harish–Chandra expansion is valid for $x\ge R_0$. The contribution of $[0,R_0]$ to the trace is independent of $R$, and therefore is $O(1)$.

For $x\ge R_0$, the Harish–Chandra expansion gives (see \cite[Theorem 2.11 and Corollary 2.13, pp. 246--247]{EKN_Pusti}), for $R\to \infty$
\begin{align}\label{Harish_chandra varphi}
\varphi_\mu^{\alpha,\beta}(R) = e^{-\rho R} \left( C_{\alpha,\beta}(\mu)e^{i\mu R} + C_{\alpha,\beta}(-\mu)e^{-i\mu R} \right) + O(e^{-(\rho+2)R}).
\end{align}
After division by $|C_{\alpha,\beta}(\mu)|^2$, this yields 
\begin{align}\label{division_term}
\frac{ A_{\alpha,\beta}(x) \varphi_\mu^{\alpha,\beta}(x)^2 }{ |C_{\alpha,\beta}(\mu)|^2 } = 2^{-2\rho} \Big( 2 + r_{\alpha,\beta}(\mu)e^{2i\mu x} + r_{\alpha,\beta}(\mu)^{-1}e^{-2i\mu x} \Big) + O(e^{-2x}),
\end{align}
uniformly for $\mu\in[-W,W]$.
Integrating \eqref{division_term} from $R_0$ to $R$, we obtain
\begin{align}\label{oscillary_term}
\frac{1}{|C_{\alpha,\beta}(\mu)|^2} \int_0^R A_{\alpha,\beta}(x) \varphi_\mu^{\alpha,\beta}(x)^2\,dx ={}& 2^{1-2\rho}R+ 2^{-2\rho}r_{\alpha,\beta}(\mu) \frac{e^{2i\mu R}-1}{2i\mu}\nonumber \\ 
&\hspace{1cm}+ 2^{-2\rho}r_{\alpha,\beta}(\mu)^{-1} \frac{1-e^{-2i\mu R}}{2i\mu} + E_R(\mu), 
\end{align}
where the fixed $R_0$-terms have been absorbed into $E_R$, and
$$ \int_{-W}^{W}|E_R(\mu)|\,d\mu=O(1).$$
We claim that the two oscillatory terms in \eqref{oscillary_term} also contribute only $O(1)$ after integration in $\mu$. Indeed, since $r_{\alpha,\beta}$ is $C^1$ on $[-W,W]$,
$$ r_{\alpha,\beta}(\mu) = r_{\alpha,\beta}(0) + \mu q_{\alpha,\beta}(\mu), $$
where $q_{\alpha,\beta}$ is bounded. Therefore,
\begin{align*} 
\int_{-W}^{W} r_{\alpha,\beta}(\mu) \frac{e^{2i\mu R}-1}{\mu}\,d\mu= r_{\alpha,\beta}(0) \int_{-W}^{W} \frac{e^{2i\mu R}-1}{\mu}\,d\mu + \int_{-W}^{W} q_{\alpha,\beta}(\mu) (e^{2i\mu R}-1)\,d\mu.
\end{align*}
The second integral is $O(1)$, while
\begin{align*}
\int_{-W}^{W} \frac{e^{2i\mu R}-1}{\mu}\,d\mu &= 2i\int_0^W \frac{\sin(2\mu R)}{\mu}\,d\mu = 2i\int_0^{2WR} \frac{\sin t}{t}\,dt = O(1).
\end{align*}
Thus
$$ \int_{-W}^{W} r_{\alpha,\beta}(\mu) \frac{e^{2i\mu R}-1}{\mu}\,d\mu =O(1).$$
The same argument applies to the term containing
$r_{\alpha,\beta}(\mu)^{-1}$. 

Now differentiating the Harish–Chandra expansion \eqref{Harish_chandra varphi} gives 
\begin{align}\label{Derivative Harish_chandra varphi}
\left.\frac{d}{dx}\right|_{x=R}\varphi_\mu^{\alpha,\beta}(x) = e^{-\rho R} \left( (i\mu - \rho)C_{\alpha,\beta}(\mu)e^{i\mu R} + (-i\mu - \rho)C_{\alpha,\beta}(-\mu)e^{-i\mu R} \right) + O(e^{-(\rho+2)R}).
\end{align}
Multiplying \eqref{Assymptotic A}, \eqref{Harish_chandra varphi}, and \eqref{Derivative Harish_chandra varphi}, we obtain
\begin{align}
&\frac{A_{\alpha,\beta}(R)\varphi_\mu^{\alpha,\beta}(R)\left.\frac{d}{dx}\right|_{x=R}\varphi_\mu^{\alpha,\beta}(x)}{\vert{}C_{\alpha,\beta}(\mu)\vert{}^2}\nonumber\\
&=2^{-2\rho} \left[(i\mu - \rho)r_{\alpha,\beta}(\mu)e^{2i\mu R} - (i\mu+\rho)(r_{\alpha,\beta}(-\mu))^{-1}e^{-2i\mu R}-2\rho \right] + O(e^{-2R}).
\end{align}
Since $r_{\alpha,\beta}$ and $r_{\alpha,\beta}^{-1}$ are bounded on $[-W,W]$, it follows that
$$\int_{-W}^{W}\frac{A_{\alpha,\beta}(R)\varphi_\mu^{\alpha,\beta}(R)\,\left.\frac{d}{dx}\right|_{x=R}\varphi_\mu^{\alpha,\beta}(x)}{(\rho^2+\mu^2)\vert{}C_{\alpha,\beta}(\mu)\vert{}^2}=\operatorname{O}(1).$$
Substituting
$$d\sigma_{\alpha, \beta}(\mu) = \frac{d \mu}{8 \pi \vert{}C_{\alpha, \beta}(\mu)\vert{}^2} - \frac{\rho}{i\mu} \frac{d \mu}{8 \pi \vert{}C_{\alpha, \beta}(\mu)\vert{}^2},$$
we obtain from \eqref{trace_def} that
$$\operatorname{Tr}(\mathcal{L}^\theta_{I,S}) = \int_{-\Omega\csc{\theta}}^{\Omega\csc{\theta}} I_R(\mu) \left( \frac{1}{8 \pi \vert{}C_{\alpha, \beta}(\mu)\vert{}^2} - \frac{\rho}{i\mu} \frac{1}{8 \pi \vert{}C_{\alpha, \beta}(\mu)\vert{}^2} \right) d\mu.$$
Since $I_R(\mu)$ and $\vert{}C_{\alpha,\beta}(\mu)\vert{}^2$ are even functions, and $\frac{\rho}{i\mu}$ is a strictly odd function, 
$$\int_{-\Omega\csc{\theta}}^{\Omega\csc{\theta}} I_R(\mu) \left( \frac{\rho}{i\mu} \right) \frac{d\mu}{8 \pi \vert{}C_{\alpha, \beta}(\mu)\vert{}^2} = 0.$$
Hence,
\begin{align}
\operatorname{Tr}(\mathcal{L}^\theta_{I,S})&= 8 \cdot 2^{-2\rho} R\int_{-W}^{W}\left|C_{\alpha,\beta}(\mu)\right|^2~\frac{d\mu}{8\pi\vert{}C_{\alpha,\beta}(\lambda)\vert{}^{2}}+\operatorname{O}(1)\nonumber\\
&= \frac{2^{-2\rho} R}{\pi} \int_{-W}^{W}  d\mu + \operatorname{O}(1)= \frac{2^{1-2\rho}}{\pi} WR+\operatorname{O}(1).
\end{align}
Since $W=\Omega\csc{\theta}$,
$$\operatorname{Tr}(\mathcal{L}^\theta_{I,S})= \frac{2^{1-2\rho} }{\pi} \Omega R\csc{\theta} + \operatorname{O}(1).$$
This completes the proof.
\end{proof}

The trace asymptotic alone is not sufficient for Landau's eigenvalue-counting argument. We therefore estimate the trace defect
$$ \operatorname{Tr}\left(\mathcal L_{I,S}^{\theta}-(\mathcal L_{I,S}^{\theta})^2\right), $$
which measures the transition of the concentration eigenvalues between $0$ and $1$.
\begin{lem}\label{norm_Lemma}
Let $\alpha\ge \beta\ge -\frac{1}{2}$, $\alpha>-\frac{1}{2}$ and
$\rho=\alpha+\beta+1$. Then 
$$\operatorname{Tr} \left( \mathcal L_{I,S}^\theta - (\mathcal L_{I,S}^\theta)^2 \right) = O(\log R),\quad R\to \infty.$$
Consequently,
$$\operatorname{Tr}\big((\mathcal{L}_{I,S}^\theta)^2\big) = \frac{2^{1-2\rho}}{\pi} \Omega R\csc\theta +  O(\log R).$$
\end{lem}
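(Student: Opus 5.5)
Since $0\le \mathcal L_{I,S}^\theta\le 1$ is a positive trace-class operator, I will write the defect as a Hilbert--Schmidt quantity. Expanding the square of the kernel gives
\[
\operatorname{Tr}\big(\mathcal L-\mathcal L^2\big)=\int_S\int_{\mathbb R\setminus S}|K^\theta_{\alpha,\beta}(\lambda,\eta)|^2\,d\sigma_{\alpha,\beta}(\eta\csc\theta)\,d\sigma_{\alpha,\beta}(\lambda\csc\theta),
\]
with $K^\theta_{\alpha,\beta}(\lambda,\eta)=\int_I \mathcal G^\theta(x,\lambda)\overline{\mathcal G^\theta(x,\eta)}A_{\alpha,\beta}(x)\,dx$. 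This identity uses only that $\mathcal H^\theta_{\alpha,\beta}P_I\mathcal T^\theta_{\alpha,\beta}$ is an orthogonal projection on the full spectral space. It is the dual of the usual formula $\int_I\int_{I^c}$, and I may equally use that form with the roles of space and spectrum swapped.

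The chirp factors have modulus one and pull out as $e^{-\frac i2(\lambda^2-\eta^2)\cot\theta}$, so $|K^\theta(\lambda,\eta)|=|K(\mu,\nu)|$ with $\mu=\lambda\csc\theta$, $\nu=\eta\csc\theta$. Here
\[
K(\mu,\nu)=\int_{-R}^R G_\mu(x)\overline{G_\nu(x)}A\,dx .
\]
The problem therefore reduces to the non-fractional one on $[-W,W]$ with $W=\Omega\csc\theta$.

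Next I compute $K(\mu,\nu)$ in closed form using the Lagrange identity, exactly as in the pairing lemma (the one defining $\mathcal I(\mu,\nu)$) but on $[-R,R]$ instead of $[-1,1]$. By parity, $K$ splits into an even part $R_R(\mu,\nu)=2\int_0^R\varphi_\mu\varphi_\nu A$ and a derivative part $Q_R/((\rho-i\mu)(\rho+i\nu))$. Both reduce to boundary terms:
\[
R_R(\mu,\nu)=\frac{2A(R)\big(\varphi_\mu'(R)\varphi_\nu(R)-\varphi_\mu(R)\varphi_\nu'(R)\big)}{\nu^2-\mu^2},\qquad Q_R=2A(R)\varphi_\mu'(R)\varphi_\nu(R)+(\mu^2+\rho^2)R_R .
\]
I then insert the Harish--Chandra expansions \eqref{Harish_chandra varphi} and \eqref{Derivative Harish_chandra varphi}, together with $A(R)\sim 2^{-2\rho}e^{2\rho R}$. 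After dividing by $|C(\mu)||C(\nu)|$, the boundary terms become a linear combination of $e^{\pm i(\mu\pm\nu)R}$ with bounded smooth coefficients, plus $O(e^{-2R})$. The resulting main term is a Dirichlet-type kernel:
\[
\frac{|K(\mu,\nu)|}{|C(\mu)C(\nu)|}\lesssim \frac{|\sin((\mu-\nu)R)|}{|\mu-\nu|}+\frac{1}{1+|\mu+\nu|}+\text{bounded},
\]
where the $(\mu+\nu)$ contributions are harmless after division by $\nu^2-\mu^2$, up to the factor coming from $\mu-\nu$. The Plancherel density $d\sigma$ contributes $|C|^{-2}$ together with the factor $1-\rho/(i\mu)$. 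On $[-W,W]$ this factor is handled by pairing $\pm\mu$, as in Lemma~\ref{trace_Lemma}. For $|\nu|\to\infty$ it is bounded, and $|C(\nu)|^{-2}\sim|\nu|^{2\alpha+1}$.

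\textbf{Main obstacle.} The hard step is to bound $\int_{-W}^W\int_{|\nu|>W}|K|^2\,d\sigma\,d\sigma$ by $O(\log R)$. Using the model kernel $\sin^2((\mu-\nu)R)/(\mu-\nu)^2$ and cutting at $|\mu-\nu|\lessgtr 1/R$, the classical computation gives
\[
\int_{-W}^W\int_{|\nu|>W}\frac{\sin^2((\mu-\nu)R)}{(\mu-\nu)^2}\,d\nu\,d\mu=O(\log R).
\]
The points requiring care are the following.

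\textbf{(a) Large $|\nu|$.} The growth $|C(\nu)|^{-2}\sim|\nu|^{2\alpha+1}$ must be compensated. For $|\nu|\ge 2W$ I would not use the asymptotic bound. Instead I would use $\int_S\int_{S^c}|K|^2\le\int_S\big(K(\mu,\mu)-\int_S|K(\mu,\nu)|^2\big)$, which, after applying Cauchy--Schwarz and the same Dirichlet structure, reduces to the region $|\nu|\le 2W$ via an $O(1)$ tail estimate.

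\textbf{(b) Uniformity.} The Harish--Chandra remainders must be uniform in $\nu$ on compact sets, and the region $x\in[0,R_0]$ contributes only $O(1)$.

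If the direct $|\nu|>2W$ tail proves troublesome, the alternative is to prove the defect bound in the spatial form $\int_I\int_{I^c}|k_S(x,y)|^2A\,A$, where $k_S(x,y)=\int_{-W}^W G_\mu(x)\overline{G_\mu(y)}\,d\sigma(\mu)$. There the $\mu$-integral is over a compact set, and the bound follows from integrating by parts in $\mu$ to get $|k_S|\lesssim e^{-\rho(x+y)}/(1+|x-y|)$.

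The consequence then follows immediately:
\[
\operatorname{Tr}(\mathcal L^2)=\operatorname{Tr}\mathcal L-O(\log R)=\frac{2^{1-2\rho}}{\pi}\Omega R\csc\theta+O(\log R),
\]
by Lemma~\ref{trace_Lemma}.
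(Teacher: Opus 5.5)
\textbf{Gap in the primary route.} Your spectral-side argument does not close, and the failure is at exactly the point you flag.

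\emph{The tail step (a).} The relation in (a) is an identity, not an inequality: it is the reproducing property of the projection $\mathcal H^\theta_{\alpha,\beta}P_I\mathcal T^\theta_{\alpha,\beta}$. Integrated over $S$, it simply restates the defect as $\operatorname{Tr}\mathcal L_{I,S}^\theta-\operatorname{Tr}((\mathcal L_{I,S}^\theta)^2)$, so it gives no $O(1)$ bound on $|\nu|>2W$. To exploit it you would have to compute $\int_S\int_S|K|^2$ to within $O(\log R)$, with a leading constant matching Lemma~\ref{trace_Lemma} exactly, and you have not done this. A direct tail bound would instead need Harish--Chandra asymptotics at $x=R$ uniform in all real $\nu$, which by your own remark (b) you do not have. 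In addition, unless the ``bounded'' term in your kernel estimate decays in $\nu$, it alone makes $\int_{|\nu|>2W}|C_{\alpha,\beta}(\nu)|^{2}|C_{\alpha,\beta}(\nu)|^{-2}\,d\nu$ diverge.

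\emph{The antidiagonal.} Your kernel estimate is wrong near $\nu=-\mu$. Since $\overline{G_{-\mu}}=G_\mu$ and the cross term is odd,
$K(\mu,-\mu)=\int_{-R}^RG_\mu^2A_{\alpha,\beta}\,dx=\frac{2\rho}{\rho-i\mu}\int_{-R}^R\varphi_\mu^2A_{\alpha,\beta}\,dx+O(|C_{\alpha,\beta}(\mu)|^2)$, which is of size $R|C_{\alpha,\beta}(\mu)|^2$. So the $e^{\pm i(\mu+\nu)R}/(\mu+\nu)$ terms are Dirichlet kernels of height $\sim R$, not $O(1/(1+|\mu+\nu|))$. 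They contribute a further $O(\log R)$ at the corners $(\pm W,\mp W)$ of $S\times S^c$, and a term of order $R$ to $\int_S\int_S|K|^2$.

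\emph{The factor $1-\rho/(i\mu)$.} In Lemma~\ref{trace_Lemma} this factor of $d\sigma_{\alpha,\beta}$ was removed using evenness of $I_R(\mu)$. Here $|K(\mu,\nu)|^2$ is only invariant under $(\mu,\nu)\mapsto(-\mu,-\nu)$, so that step needs its own argument.

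\textbf{The fallback.} Your fallback is the paper's proof. One writes $\operatorname{Tr}(\mathcal L_{I,S}^\theta-(\mathcal L_{I,S}^\theta)^2)=\|P_{I^c}Q_S^\theta P_I\|_{HS}^2=\int_I\int_{I^c}|\mathcal K_S^\theta(x,y)|^2A_{\alpha,\beta}(x)A_{\alpha,\beta}(y)\,dy\,dx$ with $Q_S^\theta=\mathcal T^\theta_{\alpha,\beta}P_S\mathcal H^\theta_{\alpha,\beta}$. The $\mu$-range is then the compact $[-W,W]$ and the measure is positive, which is precisely what sidesteps your main obstacle. However, your one-line sketch omits the step on which it hinges.

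\emph{The singularity at $\mu=0$.} After inserting the Harish--Chandra expansions, the $e^{i\mu(x-y)}$ term carries the amplitude $\frac1{2\pi}\bigl(1-\frac{\rho}{i\mu}\bigr)e^{-\rho(x+y)}$, because $C_{\alpha,\beta}(\mu)C_{\alpha,\beta}(-\mu)=|C_{\alpha,\beta}(\mu)|^2$ cancels the Plancherel weight completely. Term by term the amplitude is therefore $\sim 1/\mu$. Since $\mathrm{p.v.}\int_{-W}^W e^{i\mu t}\,\frac{d\mu}{\mu}=2i\int_0^{Wt}\frac{\sin s}{s}\,ds\to i\pi\operatorname{sgn}t$, integrating by parts gives no decay at all. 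The paper shows that the four singular parts combine, up to a constant, into $\frac{2\rho}{i\mu}[\cos(\mu(x+y))-\cos(\mu(x-y))]$. This is regular and odd, so it integrates to zero over $[-W,W]$. Only after this subtraction are the amplitudes $a_{\varepsilon,\delta}$ of bounded variation, which yields $C_S/(1+|\varepsilon x+\delta y|)$.

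\emph{The sign regions.} You also need both terms $\frac1{1+|x-y|}+\frac1{1+|x+y|}$, since points $x=R-a$, $y=-R-b$ with small $a,b>0$ lie in $I\times I^c$. Your bound $e^{-\rho(x+y)}/(1+|x-y|)$ covers only $x,y\ge0$. Each term contributes $O(\log R)$ via $\int_{-R}^R\int_R^\infty(1+y-x)^{-2}\,dy\,dx=\log(1+2R)$. The deduction for $\operatorname{Tr}((\mathcal L_{I,S}^\theta)^2)$ from Lemma~\ref{trace_Lemma} is then as you state.
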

\begin{proof}
Let
$$ Q_S^\theta = \mathcal T_{\alpha,\beta}^{\theta} P_S \mathcal H_{\alpha,\beta}^{\theta}$$
denote the bandlimiting projection onto
$\mathcal B_{\alpha,\beta}^{\theta}(S)$. Now, we have
$$ \operatorname{Tr} \left( \mathcal L_{I,S}^{\theta} - (\mathcal L_{I,S}^{\theta})^2 \right) = \operatorname{Tr} \left( P_IQ_S^\theta P_I - (P_IQ_S^\theta P_I)^2 \right).$$
Since $Q_S^\theta$ is a projection,
\begin{align}
P_IQ_S^\theta P_I - P_IQ_S^\theta P_IQ_S^\theta P_I &= P_IQ_S^\theta (I-P_I) Q_S^\theta P_I \nonumber\\ &= P_IQ_S^\theta P_{I^c} Q_S^\theta P_I.\nonumber
\end{align}
Thus
\begin{align}\label{trace_norm}
\operatorname{Tr} \left( \mathcal L_{I,S}^{\theta} - (\mathcal L_{I,S}^{\theta})^2 \right) = \left\| P_{I^c}Q_S^\theta P_I \right\|_{HS}^2.
\end{align}
Let $\mathcal K_S^\theta(x,y)$ denote the integral kernel of
$Q_S^\theta$. By the inversion formula,
$$ \mathcal K_S^\theta(x,y) = \int_S \mathcal G_{\alpha,\beta}^{\theta}(x,\lambda)\overline{ \mathcal G_{\alpha,\beta}^{\theta}(y,\lambda)} \,d\sigma_{\alpha,\beta}(\lambda\csc\theta).$$
Consequently, \eqref{trace_norm} becomes
$$\operatorname{Tr} \left( \mathcal L_{I,S}^{\theta} - (\mathcal L_{I,S}^{\theta})^2 \right) = \int_I\int_{I^c} |\mathcal K_S^\theta(x,y)|^2 A_{\alpha,\beta}(x) A_{\alpha,\beta}(y) \,dy\,dx .$$

We now estimate the kernel. Using
$$ \mathcal G_{\alpha,\beta}^{\theta}(x,\lambda) = e^{-\frac{i}{2}(x^2+\lambda^2)\cot\theta} G_{\lambda\csc\theta}^{\alpha,\beta}(x), $$
and setting $W=\Omega\csc\theta$, we can write
\begin{align}\label{Kernel_Lem4.2}
\mathcal K_S^\theta(x,y) = e^{-\frac{i}{2}(x^2-y^2)\cot\theta} \int_{-W}^{W} G_{\mu}^{\alpha,\beta}(x) G_{-\mu}^{\alpha,\beta}(y) \,d\sigma_{\alpha,\beta}(\mu).
\end{align}

We now estimate this kernel. For $x>0$, the Harish--Chandra expansion gives, for $\mu\neq0$,
\begin{align}\label{HC_Lem4.2_eqn1}
G_\mu^{\alpha,\beta}(x) = 2C_{\alpha,\beta}(\mu)e^{(-\rho+i\mu)x} + \frac{2\rho}{\rho-i\mu} C_{\alpha,\beta}(-\mu)e^{(-\rho-i\mu)x} + O(e^{-(\rho+2)x}),
\end{align}
uniformly for $\mu$ in compact subsets away from the origin. Similarly,
\begin{align}\label{HC_Lem4.2_eqn2}
G_{-\mu}^{\alpha,\beta}(y) = 2C_{\alpha,\beta}(-\mu)e^{(-\rho-i\mu)y} + \frac{2\rho}{\rho+i\mu} C_{\alpha,\beta}(\mu)e^{(-\rho+i\mu)y} + O(e^{-(\rho+2)y}). 
\end{align}
For negative arguments, the corresponding expansions follow directly from the fact that
$\varphi_\mu^{\alpha,\beta}$ is even,
$(\varphi_\mu^{\alpha,\beta})'$ is odd, and
$$ G_\mu^{\alpha,\beta}(x) = \varphi_\mu^{\alpha,\beta}(x) - \frac{(\varphi_\mu^{\alpha,\beta})'(x)} {\rho-i\mu}. $$
Furthermore,
\begin{align}
 A_{\alpha,\beta}(x)^{1/2} = 2^{-\rho}e^{\rho|x|} \left(1+O(e^{-2|x|})\right), \quad |x|\to\infty. 
 \end{align}
Hence multiplication by
$\sqrt{A_{\alpha,\beta}(x)A_{\alpha,\beta}(y)}$
cancels the factors $e^{-\rho|x|}$ and $e^{-\rho|y|}$ occurring in the Harish--Chandra expansions. We must treat the point $\mu=0$ carefully. From the explicit expression for the Harish--Chandra $C$-function and the expansion of the Gamma function at the origin,
$$ C_{\alpha,\beta}(\mu) = \frac{c_{\alpha,\beta}}{i\mu}+O(1), \quad \mu\to0, $$
with $c_{\alpha,\beta}\neq0$. Hence
$$ C_{\alpha,\beta}(-\mu) = -\frac{c_{\alpha,\beta}}{i\mu}+O(1), $$
and therefore
\begin{align}\label{Lem4.2_eqn3}
\frac{C_{\alpha,\beta}(\mu)} {C_{\alpha,\beta}(-\mu)} \longrightarrow -1, \quad \frac{C_{\alpha,\beta}(-\mu)} {C_{\alpha,\beta}(\mu)} \longrightarrow -1 \quad (\mu\to0).
\end{align}
Both quotients extend smoothly through the origin. Also,
$$ \frac1{|C_{\alpha,\beta}(\mu)|^2} = O(\mu^2), \quad \mu\to0.$$
Substituting \eqref{HC_Lem4.2_eqn1} and \eqref{HC_Lem4.2_eqn2} into \eqref{Kernel_Lem4.2} and using
$$ d\sigma_{\alpha,\beta}(\mu) = \left(1-\frac{\rho}{i\mu}\right) \frac{d\mu}{8\pi|C_{\alpha,\beta}(\mu)|^2}, $$
the four principal oscillatory phases are
$$ e^{i\mu(x-y)},\quad e^{i\mu(x+y)},\quad e^{-i\mu(x+y)},\quad e^{-i\mu(x-y)}. $$
In view of \eqref{Lem4.2_eqn3}, the possible singular parts of their coefficients at $\mu=0$, up to a common constant, combine as
$$ -\frac{\rho}{i\mu}e^{i\mu(x-y)} + \frac{\rho}{i\mu}e^{i\mu(x+y)} + \frac{\rho}{i\mu}e^{-i\mu(x+y)} - \frac{\rho}{i\mu}e^{-i\mu(x-y)}. $$
Their sum is
$$ \frac{2\rho}{i\mu} \left[ \cos\big(\mu(x+y)\big) - \cos\big(\mu(x-y)\big) \right].$$
The above expression extends regularly through $\mu=0$, since the difference of the two cosine terms vanishes to second order at the origin. Moreover, it is odd in $\mu$, so its integral over $[-W,W]$ is zero. Thus the apparent singularities at $\mu=0$ cancel after the four Harish--Chandra terms are combined.

It follows that, in each of the four sign regions determined by $x$ and $y$, the principal part of the normalized kernel is a finite sum of oscillatory integrals of the form
\begin{align}\label{Oscillatory_int1}
 \int_{-W}^{W} a_{\varepsilon,\delta}(\mu) e^{i\mu(\varepsilon x+\delta y)} \,d\mu, \quad \varepsilon,\delta\in\{-1,1\},
 \end{align}
where $\|a_{\varepsilon,\delta}\|_\infty+\|a_{\varepsilon,\delta}^\prime\|_{L^1(-W,W)}<\infty.$ 
For $t\neq0$, integration by parts gives
\begin{align*} \int_{-W}^{W}a_{\varepsilon,\delta}(\mu)e^{i\mu t}\,d\mu &= \frac{ a_{\varepsilon,\delta}(W)e^{iWt} - a_{\varepsilon,\delta}(-W)e^{-iWt} }{it}-\frac1{it} \int_{-W}^{W} a_{\varepsilon,\delta}'(\mu)e^{i\mu t}\,d\mu. \end{align*}
It follows that
$$ \left| \int_{-W}^{W} a_{\varepsilon,\delta}(\mu)e^{i\mu t}\,d\mu \right| \le \frac{C_S}{|t|}. $$
For bounded $t$, the same integral is trivially bounded. Thus
$$ \left| \int_{-W}^{W} a_{\varepsilon,\delta}(\mu) e^{i\mu(\varepsilon x+\delta y)} \,d\mu \right| \le \frac{C_S} {1+|\varepsilon x+\delta y|}.$$
Combining the finitely many terms in \eqref{Oscillatory_int1}, and enlarging $C_S$ to cover the region where $x$ and $y$ remain bounded, we obtain
$$\sqrt{ A_{\alpha,\beta}(x)A_{\alpha,\beta}(y) } \, |\mathcal K_S^\theta(x,y)| \le C_S \left( \frac{1}{1+|x-y|} + \frac{1}{1+|x+y|} \right), \quad x,y\in\mathbb R.$$
Substituting the above expression into the trace-defect identity
$$ \operatorname{Tr} \left( \mathcal L_{I,S}^{\theta} - (\mathcal L_{I,S}^{\theta})^2 \right) = \int_I\int_{I^c} |\mathcal K_S^\theta(x,y)|^2 A_{\alpha,\beta}(x)A_{\alpha,\beta}(y) \,dy\,dx, $$
and using $(a+b)^2\le2a^2+2b^2$, we obtain
$$\operatorname{Tr} \left( \mathcal L_{I,S}^{\theta} - (\mathcal L_{I,S}^{\theta})^2 \right)\le C_S \int_{-R}^{R} \int_{|y|>R} \left[ \frac{1}{(1+|x-y|)^2} + \frac{1}{(1+|x+y|)^2} \right]dy\,dx.$$
Consider first the contribution corresponding to $y>R$ and
$|x-y|$. Since $x\le R<y$,
$$ |x-y|=y-x, $$
and therefore
\begin{align*} 
\int_{-R}^{R}\int_R^\infty \frac{dy\,dx}{(1+|x-y|)^2} &= \int_{-R}^{R}\int_R^\infty \frac{dy\,dx}{(1+y-x)^2} \\ &= \int_{-R}^{R} \frac{dx}{1+R-x}= \log(1+2R).
\end{align*}
The contribution from $y<-R$ is identical. Likewise,
$$ \int_{-R}^{R}\int_{|y|>R} \frac{dy\,dx}{(1+|x+y|)^2} = O(\log R). $$
It follows that
$$\operatorname{Tr} \left( \mathcal L_{I,S}^{\theta} - (\mathcal L_{I,S}^{\theta})^2 \right) = O(\log R).$$
Finally, Lemma \ref{trace_Lemma} gives
$$ \operatorname{Tr} (\mathcal L_{I,S}^{\theta}) = \frac{2^{1-2\rho}}{\pi} \Omega R\csc\theta + O(1). $$
Since
$$ \operatorname{Tr} \big((\mathcal L_{I,S}^{\theta})^2\big) = \operatorname{Tr} (\mathcal L_{I,S}^{\theta}) - \operatorname{Tr} \left( \mathcal L_{I,S}^{\theta} - (\mathcal L_{I,S}^{\theta})^2 \right), $$
we conclude that
$$\operatorname{Tr} \big((\mathcal L_{I,S}^{\theta})^2\big) = \frac{2^{1-2\rho}}{\pi} \Omega R\csc\theta + O(\log R).$$
This completes the proof.
\end{proof}

\begin{proposition}\label{interpolation_prop}
Let $S \subset \mathbb{R}$ be bounded, and $\Lambda = \{\lambda_n\}$ be an interpolation set for $\mathcal{B}_{\alpha,\beta}^{\theta}(S)$. Then $\Lambda$ is uniformly separated; that is, there exists $d>0$ such that 
\[ \inf_{n\neq m}|\lambda_n-\lambda_m|\ge d,\] 
and interpolation is stable.
\end{proposition}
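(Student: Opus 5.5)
Our plan is to follow the standard route: first show that interpolation is automatically stable, with a bounded interpolation operator, via the open mapping theorem; then use stability together with a uniform derivative (Bernstein-type) bound for functions in $\mathcal B_{\alpha,\beta}^{\theta}(S)$ to force separation. Throughout, $\mathcal B_{\alpha,\beta}^{\theta}(S)$ is a closed subspace of $L^2(\mathbb R,A_{\alpha,\beta})$, since $\mathcal H^\theta_{\alpha,\beta}$ is unitary and $P_S$ is a projection. For $S\subseteq[-\Omega,\Omega]$, point evaluations are bounded there. Indeed, writing
$$f(x)=\int_S \mathcal H^\theta_{\alpha,\beta}f(\lambda)\,\mathcal G^\theta_{\alpha,\beta}(x,\lambda)\,d\sigma_{\alpha,\beta}(\lambda\csc\theta),$$
Cauchy--Schwarz and $|G_\mu^{\alpha,\beta}|\le 2$ give
$$|f(x)|\le 2\,\sigma_{\alpha,\beta}(S\csc\theta)^{1/2}\|f\|.$$
Here $\sigma_{\alpha,\beta}$ has finite mass on bounded sets, because $1/|C_{\alpha,\beta}(\mu)|^2=O(\mu^2)$ near $0$ and the factor $\rho/(i\mu)$ is integrable against it.

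\textbf{Stability.} Let $N$ denote the null space of the sampling map restricted to the set $\{f:\ (f(\lambda_n))\in\ell^2\}$. We argue as follows. The map $E:f\mapsto (f(\lambda_n))_n$ is defined on the subspace of $f$ with square-summable samples, and it is closed: if $f_k\to f$ in norm, then $f_k(\lambda_n)\to f(\lambda_n)$ pointwise. By the interpolation hypothesis $E$ is onto $\ell^2$. To obtain a clean bounded operator, we restrict to $N^\perp$ inside the closure of the domain. Applying the closed graph and open mapping theorems (Banach's theorem for closed surjective operators) then yields a constant $K$ such that every $a\in\ell^2$ has an interpolant $f$ with $\|f\|\le K\|a\|_{\ell^2}$. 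This is the stability asserted in the statement.

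\textbf{Separation.} Apply stability to $a=e_n$, the $n$-th unit vector. This gives $f_n\in\mathcal B^\theta_{\alpha,\beta}(S)$ with $f_n(\lambda_n)=1$, $f_n(\lambda_m)=0$ for $m\ne n$, and $\|f_n\|\le K$. Then
$$1=|f_n(\lambda_n)-f_n(\lambda_m)|\le |\lambda_n-\lambda_m|\sup_{x}|f_n'(x)|.$$
It therefore suffices to have $\sup_x|f'(x)|\le C_S\|f\|$ for all $f$ in the space. We get this by differentiating under the integral:
$$\partial_x\mathcal G^\theta_{\alpha,\beta}(x,\lambda)=e^{-\frac i2(x^2+\lambda^2)\cot\theta}\bigl(-ix\cot\theta\,G_\mu(x)+G_\mu'(x)\bigr),\qquad \mu=\lambda\csc\theta .$$
Note that $\lambda$ is a spatial variable here, since $\mathcal B$ consists of functions on the spatial side, so the $\lambda_n$ live in $x$-space.

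\textbf{Main obstacle.} The chirp term $x\cot\theta$ is unbounded in $x$, so a global derivative bound fails when $\theta\ne\pi/2$. We would circumvent this by conjugating with the chirp. Set $\tilde f(x)=e^{\frac i2x^2\cot\theta}f(x)$. Since the chirp has modulus one, $\tilde f(\lambda_n)$ and $f(\lambda_n)$ vanish or equal one simultaneously, so $\tilde f_n$ still interpolates the unit vector and $\|\tilde f_n\|=\|f_n\|$. Moreover $\tilde f$ is an ordinary inverse Opdam--Cherednik transform of a function supported in $S\csc\theta$. Hence
$$|\tilde f'(x)|\le \sup_{|\mu|\le W}|G_\mu'(x)|\,\sigma(S\csc\theta)^{1/2}\|f\|,$$
so we need $\sup_{x\in\mathbb R,\,|\mu|\le W}|(G_\mu^{\alpha,\beta})'(x)|<\infty$. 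This follows from the representation of $G_\mu$ through $\varphi_\mu$, $\varphi_\mu'$ and $\sinh(2x)\varphi^{\alpha+1,\beta+1}_\mu$, together with the bounds $|\varphi_\mu|\le\varphi_0$ and the decay $\varphi_0(x)=O((1+|x|)e^{-\rho|x|})$. Alternatively, it follows from the Sturm--Liouville equation, which gives $\varphi_\mu''=-(A'/A)\varphi_\mu'-(\mu^2+\rho^2)\varphi_\mu$ with $A'/A$ bounded away from $0$. Near $x=0$ we would instead use the equivalent formula $G_\mu=\varphi_\mu+\frac{\rho+i\mu}{4(\alpha+1)}\sinh(2x)\varphi_\mu^{\alpha+1,\beta+1}$, whose derivative is visibly bounded. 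This yields $|\lambda_n-\lambda_m|\ge 1/(C_SK)=:d$.
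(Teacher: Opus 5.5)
Your proposal is correct and follows the same route as the paper: the open mapping theorem gives a uniform interpolation constant $K$, and then unit-vector interpolants combined with a uniform derivative bound for the chirp-conjugated function force $|\lambda_n-\lambda_m|\ge 1/(C_SK)$. Where the paper simply invokes a Bernstein inequality for OC-bandlimited functions, you prove the bound directly from the inversion integral and the uniform boundedness of $\partial_x G_\mu^{\alpha,\beta}(x)$ for $x\in\mathbb R$, $|\mu|\le\Omega\csc\theta$; your open mapping step for a closed operator on the domain of square-summable samples is also more careful than the paper's closed-graph step, which tacitly assumes every $f\in\mathcal B_{\alpha,\beta}^{\theta}(S)$ already has $\ell^2$ samples (one small slip: $\tilde f_n(\lambda_n)$ is unimodular rather than equal to $1$, but $|\tilde f_n(\lambda_n)-\tilde f_n(\lambda_m)|=1$ is all the argument needs).
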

\begin{proof}
For $f\in\mathcal B_{\alpha,\beta}^{\theta}(S)$, the inversion formula gives
$$ f(x) = \int_S \mathcal H_{\alpha,\beta}^{\theta}f(\lambda) \mathcal G_{\alpha,\beta}^{\theta}(x,\lambda) \,d\sigma_{\alpha,\beta}(\lambda\csc\theta). $$
Since $S$ is bounded, point evaluation at every fixed $x\in\mathbb R$ is a continuous functional on
$\mathcal B_{\alpha,\beta}^{\theta}(S)$.
Consider the restriction operator
$$ f\longmapsto \{f(\lambda_n)\}_{n}. $$
By the definition of an interpolation set, this operator maps
$\mathcal B_{\alpha,\beta}^{\theta}(S)$ onto $\ell^2$. Moreover, its graph is closed. Indeed, if $f_k\to f$ in
$\mathcal B_{\alpha,\beta}^{\theta}(S)$ and
$$ \{f_k(\lambda_n)\}_n\longrightarrow \{a_n\}_n \quad\text{in }\ell^2, $$
then, for every fixed $n$, continuity of point evaluation gives
$$ f_k(\lambda_n)\longrightarrow f(\lambda_n). $$
On the other hand, convergence in $\ell^2$ implies coordinatewise convergence, and hence
$$ a_n=f(\lambda_n) $$
for every $n$. Thus the restriction operator has closed graph and, by the Closed Graph Theorem, it is bounded. Since it is also surjective, the Open Mapping Theorem implies that there exists a constant $K_\Lambda>0$ such that, for every
$\{a_n\}\in\ell^2$, there exists
$f\in\mathcal B_{\alpha,\beta}^{\theta}(S)$ satisfying
$$ f(\lambda_n)=a_n,\quad n\in\mathbb Z, $$
and
\begin{align}\label{interpolation_stable}
 \|f\|_{L^2(\R, \,A_{\alpha,\beta})}^2 \le K_\Lambda \sum_n|a_n|^2.
 \end{align}
Thus interpolation is stable.

It remains to prove separation. Since $e^{-\frac{i}{2}x^2\cot\theta}f(x)$ has OC spectral support contained in $ [-\Omega\csc\theta,\Omega\csc\theta]$, the spectral support remains in a fixed compact interval. The Bernstein inequality for OC-bandlimited functions therefore gives a constant $C_S>0$, depending only on $S,\alpha,\beta,\theta$, such that
\begin{align}\label{bern_inq}
\sup_{x\in\mathbb R} \left| \frac{d}{dx} \left( e^{-\frac{i}{2}x^2\cot\theta}f(x) \right) \right| \le C_S\|f\|_{L^2(\R, \, A_{\alpha,\beta})}. 
\end{align}
Suppose that $\Lambda$ is not uniformly separated. Then there exist distinct points $\lambda_n,\lambda_m\in\Lambda$ with
$|\lambda_n-\lambda_m|$ arbitrarily small.
Choose interpolation data supported only at $\lambda_n$, with
$$ a_n=e^{\frac{i}{2}\lambda_n^2\cot\theta}, \quad a_k=0,\quad k\neq n. $$
The $\ell^2$-norm of these data is one. By \eqref{interpolation_stable}, there exists an interpolating function $f$ satisfying
$$ \|f\|_{L^2(\R, \,A_{\alpha,\beta})}\le \sqrt{K_\Lambda}. $$
Moreover,
$$ e^{-\frac{i}{2}\lambda_n^2\cot\theta}f(\lambda_n)=1, $$
whereas
$$ e^{-\frac{i}{2}\lambda_m^2\cot\theta}f(\lambda_m)=0. $$
Hence, by the fundamental theorem of calculus and \eqref{bern_inq},
$$ \begin{aligned} 1 &\le |\lambda_n-\lambda_m| \sup_{x\in[\lambda_n,\lambda_m]} \left| \frac{d}{dx} \left( e^{-\frac{i}{2}x^2\cot\theta}f(x) \right) \right| \\ &\le C_S\sqrt{K_\Lambda}\, |\lambda_n-\lambda_m|. \end{aligned} $$
Therefore
$$ |\lambda_n-\lambda_m| \ge \frac{1}{C_S\sqrt{K_\Lambda}} $$
for every $n\neq m$. Thus $\Lambda$ is uniformly separated.
\end{proof}

\begin{proposition}\label{sampling}
Let $S=[-\Omega,\Omega]$ be fixed and $d>0$. Then there exists $C_{\Omega,d}>0$ such that every $h\in \mathcal{B}_{\alpha,\beta}^{\theta}(S)$ satisfies
\[
|h(\mu)|^2
\le C_{\Omega,d}
\int_{\mu-d}^{\mu+d}|h(\nu)|^2A_{\alpha,\beta}(\nu)\,d\nu,
\quad \mu\in\mathbb R,
\]
where $C_{\Omega,d}$ is independent of $\mu$.
\end{proposition}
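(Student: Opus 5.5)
The plan is to reduce the estimate to a local inequality for entire functions of exponential type. Three points need to be justified separately: uniformity of the weight, passage from a complex disc to the real interval, and uniformity of the constant in $\mu$.

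\textbf{Step 1: removing the chirp and the weight.} Put $g(x)=e^{-\frac{i}{2}x^2\cot\theta}h(x)$. The relation between $\mathcal H_{\alpha,\beta}^{\theta}$ and $\mathcal H_{\alpha,\beta}$ from Section \ref{sec3} shows that $g$ is Opdam--Cherednik bandlimited to $[-W,W]$ with $W=\Omega\csc\theta$. Moreover $|g|=|h|$ on $\mathbb R$, so it suffices to prove the estimate for $g$. By the inversion formula,
$$ g(z)=\int_{-W}^{W}\widehat g(\lambda)\,G_\lambda^{\alpha,\beta}(z)\,d\sigma_{\alpha,\beta}(\lambda), $$
and this defines a holomorphic extension of $g$ to the strip $|\operatorname{Im}z|<\pi/2$, where $G_\lambda^{\alpha,\beta}$ is analytic, with uniform bounds there for $|\lambda|\le W$.

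The weight is handled next. For $|\nu-\mu|\le d$ and $|\mu|\ge 1+d$, the asymptotic \eqref{Assymptotic A} gives $A_{\alpha,\beta}(\nu)\asymp A_{\alpha,\beta}(\mu)$, with constants depending only on $d$. Near the origin $A_{\alpha,\beta}$ vanishes like $|\nu|^{2\alpha+1}$. Since this is integrable, it only changes constants on a compact range of $\mu$, which Step 3 will treat.

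\textbf{Step 2: passing from the disc to the real segment.} On the disc $D(\mu,r)$ with $r<\min(d,\pi/2)$, subharmonicity of $|g|^2$ gives
$$ |g(\mu)|^2\le \frac{1}{\pi r^2}\int_{D(\mu,r)}|g|^2\,dA. $$
It then remains to bound $\int_{D(\mu,r)}|g|^2$ by $\int_{\mu-d}^{\mu+d}|g|^2$.

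I would attempt this with a Remez/Kovrijkine-type inequality for holomorphic functions of bounded growth in a strip. The bound
$$ \sup_{D(\mu,r)}|g|\le C\,\Big(\tfrac{1}{d}\int_{\mu-d}^{\mu+d}|g|^2\Big)^{1/2} $$
would follow if the ratio $\sup_{D(\mu,2d)}|g|/\sup_{[\mu-d,\mu+d]}|g|$ can be controlled uniformly over the class. For the Harish--Chandra model this class consists, near infinity, of $e^{-\rho x}$ times exponential sums $\int_{-W}^{W}a(\lambda)e^{\pm i\lambda x}\,d\lambda$. This growth control is the main obstacle. For general Paley--Wiener functions, Turán-type inequalities control this ratio only in terms of the global norm, not the local one.

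\textbf{Step 3: uniformity in $\mu$.} I would first try a compactness (normal-families) argument. Suppose the inequality fails along functions $h_k$ with $h_k(\mu_k)=1$ and $\int_{\mu_k-d}^{\mu_k+d}|h_k|^2A\to0$. Translate so that $\mu_k$ sits at the origin, using the approximate translation structure supplied by the Harish--Chandra expansion at large $|\mu|$.

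If the normalized functions were locally bounded, a limit would exist that is analytic, vanishes on an interval, and yet equals $1$ at the centre, which is a contradiction. The hidden difficulty is exactly the local boundedness of the normalized family. The normalization is only local, and nothing in it prevents the family from blowing up elsewhere, so this must be checked carefully.

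If the argument cannot be closed, I expect the statement to require an additional global term. In that case the correct form would be $|h(\mu)|^2\le C\int_{\mu-d}^{\mu+d}|h|^2A+\varepsilon\|h\|^2$, which is what Landau's argument actually needs.
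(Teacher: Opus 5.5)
Your Step 1 is exactly the paper's reduction. The paper removes the chirp, notes that $e^{-\frac{i}{2}x^2\cot\theta}h$ is Opdam--Cherednik bandlimited to $[-\Omega\csc\theta,\Omega\csc\theta]$, and then invokes a ``local Plancherel--P\'olya inequality for OC-bandlimited functions'' with no proof or reference. That invoked inequality is precisely what your Steps 2--3 cannot supply.

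Your suspicion is correct: the inequality, and hence the proposition, is false. It fails already for a single fixed $\mu$, and already in the Euclidean model (formally $\alpha=\beta=-\frac12$, $\theta=\frac{\pi}{2}$, where $G_\lambda^{\alpha,\beta}(x)=e^{i\lambda x}$ and $A_{\alpha,\beta}\equiv1$). With $M=2N+1$, consider
\[
f_N(x)=\Bigl(1-\frac{x^2}{d^2}\Bigr)^{N}\Bigl(\frac{\sin(\Omega x/M)}{\Omega x/M}\Bigr)^{M}.
\]
This is entire of exponential type $\Omega$ and $O(|x|^{-1})$ on $\mathbb R$, so it lies in the Paley--Wiener space with band $[-\Omega,\Omega]$. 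It satisfies $f_N(0)=1$, whereas $\int_{-d}^{d}|f_N|^2\le\int_{-d}^{d}(1-x^2/d^2)^{2N}\,dx\to0$.

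The same failure occurs in the FrOC setting for every $\mu$. Suppose $|h(\mu)|^2\le C\int_J|h|^2A_{\alpha,\beta}$ held on $\mathcal B^{\theta}_{\alpha,\beta}(S)$, with $J=[\mu-d,\mu+d]$. Extending $h|_J\mapsto h(\mu)$ and applying the Riesz representation theorem in $L^2(J,A_{\alpha,\beta})$ gives $k$ with $h(\mu)=\int_J h\,\overline{k}\,A_{\alpha,\beta}$. Testing on $h=\mathcal T^{\theta}_{\alpha,\beta}F$ with $F$ supported in $S$ gives $\mathcal G^{\theta}_{\alpha,\beta}(\mu,\lambda)=\int_J\mathcal G^{\theta}_{\alpha,\beta}(x,\lambda)\overline{k(x)}A_{\alpha,\beta}(x)\,dx$ on $S$. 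Both sides are entire in $\lambda$, so the identity holds for all $\lambda\in\mathbb R$. Then $|h(\mu)|\le\|k\|\,\|h\|_{L^2(J,A_{\alpha,\beta})}$ would hold for $h=\mathcal T^{\theta}_{\alpha,\beta}F$ with $F$ bounded and compactly supported but of \emph{arbitrary} bandwidth. That is absurd: band-truncations of the transform of a narrow smooth bump equal to $1$ at $\mu$ converge to that bump uniformly on $J$. So the gap cannot be closed, in your proposal or in the paper; the statement itself must be changed.

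Your diagnosis of where it breaks is accurate. The subharmonicity bound over $D(\mu,r)$ is the genuine local Plancherel--P\'olya inequality. Passing from the disc to the segment needs control of $g$ off the real line, and only the global norm supplies that. The family $f_N$ is exactly what defeats your Step 3: it is normalized at the centre, tiny on the segment, and unbounded elsewhere on $\mathbb R$ (e.g.\ $|f_N(2d)|\approx3^N$).

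Your repair $|h(\mu)|^2\le C\int_{\mu-d}^{\mu+d}|h|^2A_{\alpha,\beta}+\varepsilon\|h\|^2$ is true for each fixed $\mu$. Your normal-families argument closes once $\|h\|$ is bounded, because the inversion formula over a fixed compact band then gives uniform bounds on a complex neighbourhood of $J$. However, it is not what the density argument needs. In Lemma~\ref{sampling_eigenvalue} it would be summed over the infinitely many nodes outside $I^+$, and in Lemma~\ref{interpolation_eigenvalue} over the $\asymp R$ nodes in $I^-$, so the accumulated error is not small. Landau's scheme instead requires an aggregated estimate such as
\[
\sum_{\lambda_n\notin I^+}|h(\lambda_n)|^2\le C\int_{\mathbb R\setminus I}|h|^2A_{\alpha,\beta}+\varepsilon\|h\|^2,
\]
where the margin between $I$ and $I^+$ may depend on $\varepsilon$, together with its analogue inside $I^-$ for interpolation. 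In the Euclidean case this follows from $h=h*\psi$ with $\widehat\psi=1$ on the band, together with the splitting $h=h\mathbf 1_{I}+h\mathbf 1_{\mathbb R\setminus I}$. This is a window with rapidly decaying tails instead of a compactly supported one. In the FrOC setting you would need a reproducing formula with a smooth spectral cutoff and quantified off-diagonal decay of its kernel, and the paper's eigenvalue lemmas would have to be rerun with that estimate.
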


\begin{proof}
Let $h\in\mathcal B_{\alpha,\beta}^{\theta}(S)$. Consequently,
$e^{-\frac{i}{2}x^2\cot\theta}h(x) $
is OC-bandlimited to the compact interval
$ [-\Omega\csc\theta,\Omega\csc\theta].$
The local Plancherel–Pólya inequality for OC-bandlimited functions with fixed compact spectral support therefore yields, for every $d>0$,
$$ \left| e^{-\frac{i}{2}\mu^2\cot\theta}h(\mu) \right|^2 \le C_{\Omega,d} \int_{\mu-d}^{\mu+d} \left| e^{-\frac{i}{2}\nu^2\cot\theta}h(\nu) \right|^2 A_{\alpha,\beta}(\nu)\,d\nu, $$
where $C_{\Omega,d}$ is independent of $\mu$.
Therefore,
$$ |h(\mu)|^2 \le C_{\Omega,d} \int_{\mu-d}^{\mu+d} |h(\nu)|^2A_{\alpha,\beta}(\nu)\,d\nu. $$
This proves the result.
\end{proof}

We now relate stable sampling to the concentration eigenvalues. The next lemma provides the eigenvalue estimate required for the lower density bound.
\begin{lem}\label{sampling_eigenvalue}
Let $S\subset \mathbb{R}$ be bounded and let $\Lambda = \{\lambda_n\}$ be a sampling set for the space $\mathcal{B}_{\alpha,\beta}^{\theta}(S)$, whose points are separated by at least $2d > 0$. For $R>0$, let $I = [-R, R]$ and $I^+ = [-R-d, R+d]$, and $n(I^+)$ be the number of points of $\Lambda$ contained in $I^+$. Then 
$$\lambda_{n(I^+)}(I, S) \le \gamma < 1,$$
where $\gamma$ depends on $S$ and $\Lambda$, but independent of $R$.
\end{lem}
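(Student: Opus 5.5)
This is Landau's classical min-max argument. Write the concentration operator in its spatial form $P_I Q_S^\theta P_I$ acting on $\mathcal B_{\alpha,\beta}^\theta(S)$. Its nonzero eigenvalues coincide with those of $\mathcal L_{I,S}^\theta$, and
$$\langle P_I Q_S^\theta P_I f,f\rangle=\|P_I f\|^2 \quad\text{for } f\in \mathcal B_{\alpha,\beta}^\theta(S).$$
By the Courant--Fischer min-max principle,
$$\lambda_{n(I^+)}(I,S)\le \sup\Big\{\frac{\int_I|f|^2A_{\alpha,\beta}}{\|f\|^2}: f\in\mathcal B_{\alpha,\beta}^\theta(S),\ f(\lambda_n)=0 \text{ for all } \lambda_n\in I^+\Big\}.$$
This holds because the vanishing conditions at the $n(I^+)$ points of $\Lambda\cap I^+$ define a subspace of codimension at most $n(I^+)$. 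Point evaluations are continuous functionals, as noted in Proposition \ref{interpolation_prop}.

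It therefore suffices to show that every such $f$ satisfies $\int_{I^c}|f|^2A_{\alpha,\beta}\ge c\|f\|^2$ with $c>0$ independent of $R$. Then the Rayleigh quotient is at most $\gamma:=1-c$. Take $f$ vanishing on $\Lambda\cap I^+$. The sampling inequality gives
$$A_\Lambda\|f\|^2\le\sum_n|f(\lambda_n)|^2=\sum_{\lambda_n\notin I^+}|f(\lambda_n)|^2 .$$
Apply Proposition \ref{sampling} at each such $\lambda_n$ with radius $d$:
$$|f(\lambda_n)|^2\le C_{\Omega,d}\int_{\lambda_n-d}^{\lambda_n+d}|f|^2A_{\alpha,\beta}.$$
Since $\lambda_n\notin I^+$ means $|\lambda_n|>R+d$, each interval $(\lambda_n-d,\lambda_n+d)$ lies in $I^c$. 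The $2d$-separation makes these intervals pairwise disjoint. Summing over $n$ gives
$$A_\Lambda\|f\|^2\le C_{\Omega,d}\int_{I^c}|f|^2A_{\alpha,\beta}.$$
Hence $\int_I|f|^2A_{\alpha,\beta}\le (1-A_\Lambda/C_{\Omega,d})\|f\|^2$. We set $\gamma=1-A_\Lambda/C_{\Omega,d}$. This quantity is automatically below $1$, and it is nonnegative because the ratio of integrals is at most $1$. It depends only on $S$, $d$ and $\Lambda$, not on $R$.

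The main point needing care is the min-max step. Two things must be checked. First, the operator restricted to $\mathcal B_{\alpha,\beta}^\theta(S)$ is unitarily equivalent, through $\mathcal H_{\alpha,\beta}^\theta$, to $\mathcal L_{I,S}^\theta$ as defined in \eqref{Concentration_opt_in_B}. Second, the eigenvalue indexing must be in decreasing order, starting at $\lambda_0$. With this convention the $(n(I^+)+1)$-st largest eigenvalue is bounded by the supremum over a subspace of codimension $n(I^+)$. If the paper instead indexes from $1$, the same argument bounds $\lambda_{n(I^+)+1}$, and the statement follows after this harmless index shift. A general $S$ may be handled by enclosing it in some $[-\Omega,\Omega]$, since Proposition \ref{sampling} is applied only to functions in the smaller space.
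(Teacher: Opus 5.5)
Your proposal is correct and follows the paper's proof essentially step for step. It uses the subspace of $\mathcal B_{\alpha,\beta}^\theta(S)$ vanishing on $\Lambda\cap I^+$, which has codimension at most $n(I^+)$. It combines the lower sampling bound with the local Plancherel--P\'olya estimate on the disjoint intervals $[\lambda_n-d,\lambda_n+d]\subset\mathbb R\setminus I$ to get $\gamma=1-A_\Lambda/C_{S,d}$. It then applies min--max through the unitary equivalence with $\mathcal L_{I,S}^\theta$; your zero-based indexing is the convention under which the stated bound on $\lambda_{n(I^+)}$ is exactly what min--max gives.

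One small correction concerns the quadratic-form identity. The identity $\langle Tf,f\rangle=\int_I|f|^2A_{\alpha,\beta}$ for $f\in\mathcal B_{\alpha,\beta}^\theta(S)$ holds for $T=Q_S^\theta P_IQ_S^\theta$, which is the paper's $D_S^\theta P_ID_S^\theta$. That is the operator on $\mathcal B_{\alpha,\beta}^\theta(S)$ to which min--max should be applied. For the operator you wrote, $\langle P_IQ_S^\theta P_If,f\rangle=\|Q_S^\theta P_If\|^2$ instead. Since the two operators share their nonzero eigenvalues, nothing else in your argument changes.
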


\begin{proof}
By the local Plancherel--Pólya estimate for $\mathcal{B}_{\alpha,\beta}^{\theta}(S)$, there exists a constant $C_{S,d} > 0$, independent of $\lambda$, such that
$$\vert{}f(\lambda)\vert{}^2 \le C_{S,d} \int_{\lambda-d}^{\lambda+d} \vert{}f(\eta)\vert{}^2 \, A_{\alpha,\beta}(\eta)\,d\eta, \quad f \in \mathcal{B}_{\alpha,\beta}^{\theta}(S).$$
Let $N = n(I^+)$, and consider the closed subspace
$$E = \{ f \in \mathcal{B}_{\alpha,\beta}^{\theta}(S) : f(\lambda_n) = 0 \text{ for every } \lambda_n \in I^+ \}.$$
Since we impose exactly $N$ homogeneous linear conditions, $\operatorname{codim} E \le N$. Since $\Lambda$ is a sampling set, there exists $A_\Lambda > 0$ such that 
$$A_\Lambda \|f\|^2_{L^2(\R, A_{\alpha,\beta})} \le \sum_{\lambda_n \in \Lambda} |f(\lambda_n)|^2.$$ For any $f \in E$, the evaluations at the nodes within $I^+$ vanish perfectly. Hence,
\begin{align}\label{Sample_local_bound}
A_\Lambda \Vert{}f\Vert{}^2_{L^2(\R, A_{\alpha,\beta})} &\le \sum_{\lambda_n \notin I^+} \vert{}f(\lambda_n)\vert{}^2 \nonumber\\
&\le C_{S,d} \sum_{\lambda_n \notin I^+} \int_{\lambda_n-d}^{\lambda_n+d} \vert{}f(\eta)\vert{}^2 \, A_{\alpha,\beta}(\eta)\,d\eta.
\end{align}
Since the points of $\Lambda$ are separated by at least $2d$, the integration intervals $[\lambda_n-d, \lambda_n+d]$ are mutually disjoint up to endpoints. Furthermore, if $\lambda_n \notin I^+ = [-R-d, R+d]$, then $[\lambda_n-d,\lambda_n+d]\subset\mathbb R\setminus I.$ Consequently, \eqref{Sample_local_bound} becomes
$$A_\Lambda \Vert{}f\Vert{}^2_{L^2(\R, A_{\alpha,\beta})} \le C_{S,d} \int_{\mathbb{R} \setminus I} \vert{}f(\eta)\vert{}^2 \, A_{\alpha,\beta}(\eta)\,d\eta.$$
Since 
$$\|f\|^2_{L^2(\R, A_{\alpha,\beta})}=
\int_{\mathbb R}|f(\eta)|^2\,
A_{\alpha,\beta}(\eta)\,d\eta,$$
we have
$$A_\Lambda \Vert{}f\Vert{}^2_{L^2(\R, A_{\alpha,\beta})} \le C_{S,d} \left( \Vert{}f\Vert{}^2_{L^2(\R, A_{\alpha,\beta})} - \int_I \vert{}f(\eta)\vert{}^2 \, A_{\alpha,\beta}(\eta)\,d\eta\right).$$
Thus
$$\frac{\int_I |f(\eta)|^2 \, A_{\alpha,\beta}(\eta)\,d\eta}{\|f\|^2_{L^2(\R, A_{\alpha,\beta})}} \le 1 - \frac{A_\Lambda}{C_{S,d}} =: \gamma < 1.$$
The constants $A_\Lambda$ and $C_{S,d}$ are independent of $R$, and therefore so is $\gamma$. Since $\mathcal H_{\alpha,\beta}^{\theta}$ is unitary, the spatial concentration operator $ D_S^\theta P_I D_S^\theta $
is unitarily equivalent to $\mathcal L_{I,S}^\theta$ and therefore has the same concentration eigenvalues. For $f\in\mathcal B_{\alpha,\beta}^{\theta}(S)$, $D_S^\theta f=f$, and hence
$$\frac{\langle D_S^\theta P_I D_S^\theta f,f\rangle_{L^2(\R, A_{\alpha,\beta})}}{\|f\|^2_{L^2(\R, A_{\alpha,\beta})}}=
\frac{\displaystyle\int_I|f(\eta)|^2\,A_{\alpha,\beta}(\eta)\,d\eta}
{\|f\|^2_{L^2(\R, A_{\alpha,\beta})}}.$$ 
Since $D_S^\theta P_I D_S^\theta$ and $\mathcal L_{I,S}^\theta$ have the same eigenvalues, the min--max principle gives
$$\lambda_N(I,S)\le\sup_{0\neq f\in E}\frac{\langle D_S^\theta P_I D_S^\theta f,f\rangle_{L^2(\R, A_{\alpha,\beta})}}{\|f\|^2_{L^2(\R, A_{\alpha,\beta})}}\le\gamma.$$
Since $N=n(I^+)$, it follows that
$$\lambda_{n(I^+)}(I,S) \le \gamma < 1.$$
\end{proof}

The interpolation case requires the complementary eigenvalue estimate. The following lemma gives a uniform lower bound for the concentration eigenvalues determined by the interpolation points lying inside the contracted interval.
\begin{lem}\label{interpolation_eigenvalue}
Let $S\subset\mathbb R$ be bounded, and let $\Lambda=\{\lambda_n\}$ be an interpolation set for $\mathcal{B}_{\alpha,\beta}^{\theta}(S)$. Assume that the points of $\Lambda$ are separated by at least $2d>0$. For $R>d>0$, let $I=[-R,R]$, $I^-=[-R+d,R-d],$ and $n(I^-)$ denote the number of points of $\Lambda$ contained in $I^-$. Then 
$$\lambda_{n(I^-)-1}(I,S)\ge\delta>0,$$
where $\delta>0$ depends on $S$ and $\Lambda$ but is independent of $R$.
\end{lem}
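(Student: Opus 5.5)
We argue by the max--min principle, in the spirit of Lemma \ref{sampling_eigenvalue} but dually. Let $N=n(I^-)$ and let $\lambda_{n_1},\dots,\lambda_{n_N}$ be the points of $\Lambda$ in $I^-$. By Proposition \ref{interpolation_prop}, interpolation is stable with constant $K_\Lambda$. For each $k$, choose $f_k\in\mathcal B_{\alpha,\beta}^{\theta}(S)$ with $f_k(\lambda_m)=\delta_{m,n_k}$ for all $m$ and $\|f_k\|^2\le K_\Lambda$. Set $F=\operatorname{span}\{f_1,\dots,f_N\}$. The functions are linearly independent, because evaluation on the $N$ nodes maps $F$ onto $\mathbb C^N$, so $\dim F=N$. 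As in Lemma \ref{sampling_eigenvalue}, the operator $D_S^\theta P_I D_S^\theta$ is unitarily equivalent to $\mathcal L_{I,S}^\theta$. With the eigenvalues indexed from $0$, the max--min principle therefore gives
$$\lambda_{N-1}(I,S)\ \ge\ \inf_{0\ne f\in F}\frac{\int_I|f|^2A_{\alpha,\beta}}{\|f\|^2_{L^2(\R,A_{\alpha,\beta})}}.$$
It remains to bound this Rayleigh quotient from below uniformly in $R$.

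Fix $f=\sum_k a_kf_k\in F$. Then $f(\lambda_{n_k})=a_k$ for $\lambda_{n_k}\in I^-$, and $f$ vanishes at every other point of $\Lambda$.

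\emph{Lower bound for the numerator.} Each $\lambda_{n_k}$ lies in $I^-$, so $[\lambda_{n_k}-d,\lambda_{n_k}+d]\subset I$. The $2d$-separation makes these intervals essentially disjoint. Proposition \ref{sampling} then gives
$$\sum_k|a_k|^2=\sum_k|f(\lambda_{n_k})|^2\le C_{\Omega,d}\sum_k\int_{\lambda_{n_k}-d}^{\lambda_{n_k}+d}|f|^2A_{\alpha,\beta}\le C_{\Omega,d}\int_I|f|^2A_{\alpha,\beta}.$$

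\emph{Upper bound for the denominator.} This is the main obstacle: we need $\|f\|^2\le K'\sum_k|a_k|^2$ with $K'$ independent of $R$ and $N$. The naive triangle inequality $\|\sum a_kf_k\|^2\le N K_\Lambda\sum|a_k|^2$ loses a factor of $N$, so we avoid it. Instead we apply stable interpolation directly to the data $\{f(\lambda_m)\}_m$, which equals $a_k$ at $\lambda_{n_k}$ and $0$ elsewhere. This produces a function $g$ with the same samples as $f$ and $\|g\|^2\le K_\Lambda\sum|a_k|^2$.

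To identify $f$ with such a $g$, we make a canonical choice of the $f_k$. The solution operator obtained from the open mapping argument can be taken to be the bounded linear operator $T:\ell^2\to\mathcal B_{\alpha,\beta}^{\theta}(S)$ sending $\{a_m\}$ to the minimal-norm interpolant, namely the orthogonal projection onto the complement of the kernel of the restriction map. This $T$ has $\|T\|^2\le K_\Lambda$. Setting $f_k=T e_{n_k}$, linearity gives $f=T(\sum_k a_ke_{n_k})$, and hence $\|f\|^2\le K_\Lambda\sum_k|a_k|^2$.

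Combining the two bounds yields
$$\frac{\int_I|f|^2A_{\alpha,\beta}}{\|f\|^2}\ \ge\ \frac{1}{C_{\Omega,d}K_\Lambda}=:\delta>0,$$
which is independent of $R$. This proves $\lambda_{n(I^-)-1}(I,S)\ge\delta$. The case $n(I^-)=0$ is vacuous, and bounded $S$ may be enlarged to some $[-\Omega,\Omega]$ so that Proposition \ref{sampling} applies.
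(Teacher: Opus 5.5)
Your proposal is correct and follows essentially the same route as the paper: take the $N$-dimensional space $T(\ell^2(I^-))$ for a bounded linear interpolation operator $T$, bound $\|f\|^2\le K_\Lambda\sum_{\lambda_n\in I^-}|f(\lambda_n)|^2$, apply the local Plancherel--P\'olya estimate on the disjoint intervals $[\lambda_n-d,\lambda_n+d]\subset I$, and conclude by the max--min principle with $\delta=1/(K_\Lambda C_{S,d})$. Your explicit choice of $T$ as the minimal-norm interpolant usefully justifies the linearity of $T$ (and so the $R$-independent bound on the denominator), which the paper simply asserts.
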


\begin{proof}
By the stability of interpolation, there exists a constant $K_\Lambda>0$ and a bounded interpolation operator $T:\ell^2(\Lambda) \longmapsto \mathcal{B}_{\alpha,\beta}^{\theta}(S)$ such that $(Ta)(\lambda_n)=a_n$ for all $n\in\mathbb Z$, and
\begin{align}\label{interpolation_opt}
\Vert{}Ta\Vert{}^2_{L^2(\R, A_{\alpha,\beta})} \le K_\Lambda\sum_n\vert{}a_n\vert{}^2.
\end{align}
Let $N=n(I^-)$ and denote by $\ell^2(I^-)$ the subspace of sequences supported on the points of $\Lambda$ $I^-$. Define 
$$E=T\bigl(\ell^2(I^-)\bigr).$$
Since $T$ interpolates the data, it is injective on $\ell^2(I^-)$. Thus, $\dim E=N$. 

For every $f\in E$, the interpolation data vanish outside $I^-$; hence $f(\lambda_n)=0$ for all $\lambda_n\notin I^-$. By \eqref{interpolation_opt}, 
\begin{align}\label{bound}
\Vert{}f\Vert{}^2_{L^2(\R, A_{\alpha,\beta})} \le K_\Lambda \sum_{\lambda_n\in I^-} \vert{}f(\lambda_n)\vert{}^2.
\end{align}
By the local Plancherel–Pólya estimate, there exists $C_{S,d}>0$, independent of $\lambda$, such that
$$\vert{}f(\lambda)\vert{}^2\le C_{S,d} \int_{\lambda-d}^{\lambda+d} \vert{}f(\eta)\vert{}^2 \, A_{\alpha,\beta}(\eta)\,d\eta.$$
Therefore, \eqref{bound} gives
\begin{align}\label{bound1}
\Vert{}f\Vert{}^2_{L^2(\R, A_{\alpha,\beta})} \le K_\Lambda C_{S,d} \sum_{\lambda_n\in I^-} \int_{\lambda_n-d}^{\lambda_n+d} \vert{}f(\eta)\vert{}^2 \, A_{\alpha,\beta}(\eta)\,d\eta.
\end{align}
Since the points of $\Lambda$ are separated by at least $2d$, the integration intervals $[\lambda_n-d,\lambda_n+d]$ are mutually disjoint up to endpoints. Furthermore, for every $\lambda_n\in I^-=[-R+d,R-d]$, the geometric condition guarantees $[\lambda_n-d,\lambda_n+d]\subset I$. Consequently, \eqref{bound1} yields
$$\Vert{}f\Vert{}^2_{L^2(\R, A_{\alpha,\beta})} \le K_\Lambda C_{S,d} \int_I \vert{}f(\eta)\vert{}^2 \, A_{\alpha,\beta}(\eta)\,d\eta.$$
Hence, for $0\ne f\in E$,
\begin{align}\label{lower_bound2}
\frac{\int_I\vert{}f(\eta)\vert{}^2 \, A_{\alpha,\beta}(\eta)\,d\eta}{\Vert{}f\Vert{}^2_{L^2(\R, A_{\alpha,\beta})}} \ge \frac{1}{K_\Lambda C_{S,d}} =:\delta>0.
\end{align}
The constants $K_\Lambda$ and $C_{S,d}$ are strictly independent of $R$ and therefore so is $\delta$. Similarly, using the same argument as in Lemma \ref{sampling_eigenvalue}, we obtain
$$\lambda_{N-1}(I,S) \ge \inf_{0\neq f\in E} \frac{\langle D_S^\theta P_I D_S^\theta f,f\rangle_{L^2(\R, A_{\alpha,\beta})}}{\|f\|^2_{L^2(\R, A_{\alpha,\beta})}} \ge \delta.$$
Substituting $N=n(I^-)$ concludes $\lambda_{n(I^-)-1}(I,S)\ge\delta>0$.
\end{proof}


\begin{theorem}[\textbf{Sampling}]
Let $S=[-\Omega,\Omega]$. If $\Lambda = \{\lambda_n\}$ is a sampling set for $\mathcal{B}_{\alpha,\beta}^\theta(S)$ with separation constant $2d>0$, then 
$$\liminf_{R \to \infty} \frac{n([-R,R])}{R} \ge \frac{2^{1-2\rho}}{\pi} \Omega \csc \theta.$$
\end{theorem}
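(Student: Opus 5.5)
We follow Landau's eigenvalue-counting argument. The inputs are Lemma \ref{sampling_eigenvalue}, which bounds the concentration eigenvalues from above, and the trace estimates of Lemma \ref{trace_Lemma} and Lemma \ref{norm_Lemma}.

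Fix $R>0$, put $I=[-R,R]$ and $I^+=[-R-d,R+d]$, and let $N=n(I^+)$. Order the concentration eigenvalues $1\ge\lambda_0(I,S)\ge\lambda_1(I,S)\ge\cdots\ge0$. We will count how many of them exceed $\gamma$, where $\gamma<1$ is the constant of Lemma \ref{sampling_eigenvalue}. Since the eigenvalues are non-increasing and $\lambda_N(I,S)\le\gamma$ by that lemma, we get
$$\#\{k:\lambda_k(I,S)>\gamma\}\le N=n(I^+).$$

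The main step is a lower bound for this count in terms of traces. Write $\operatorname{Tr}(\mathcal L)=\sum_k\lambda_k$ and split the sum at the threshold $\gamma$. The eigenvalues above $\gamma$ each contribute at most $1$, so their total is at most the count. For an eigenvalue $\lambda\le\gamma$ we have $\lambda\le\lambda(1-\lambda)/(1-\gamma)$. Summing over the eigenvalues below $\gamma$ then gives
$$\#\{k:\lambda_k>\gamma\}\ \ge\ \operatorname{Tr}(\mathcal L^\theta_{I,S})-\frac{1}{1-\gamma}\operatorname{Tr}\big(\mathcal L^\theta_{I,S}-(\mathcal L^\theta_{I,S})^2\big).$$
For this we need $\mathcal L^\theta_{I,S}$ to be positive, trace class, and bounded by $1$. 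This holds because $\mathcal L^\theta_{I,S}=P_S\mathcal H P_I\mathcal T P_S$ is a compression of a projection and its trace is finite by Lemma \ref{trace_Lemma}. Inserting Lemma \ref{trace_Lemma} and Lemma \ref{norm_Lemma} yields
$$n(I^+)\ \ge\ \frac{2^{1-2\rho}}{\pi}\,\Omega R\csc\theta+O(1)-\frac{C}{1-\gamma}\log R.$$

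To pass to the limit, note that $n(I^+)=n([-R-d,R+d])$. Setting $R'=R+d$ and dividing by $R'$, the error terms $O(\log R')/R'$ tend to $0$, and $(R'-d)/R'\to1$. Taking $\liminf$ as $R'\to\infty$ gives the claimed bound $\liminf_{R\to\infty} n([-R,R])/R\ge \frac{2^{1-2\rho}}{\pi}\Omega\csc\theta$. The essential point is that the constant $\gamma$ is independent of $R$, which Lemma \ref{sampling_eigenvalue} guarantees; this is what lets the trace-defect term $O(\log R)$ be absorbed.

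The step I expect to be the main obstacle is justifying the eigenvalue bookkeeping. First, the indexing convention must match the one used in Lemma \ref{sampling_eigenvalue}: if indexing starts at $0$, the bound $\lambda_N\le\gamma$ gives at most $N$ eigenvalues above $\gamma$; with other conventions there is a harmless shift by one. Second, the trace formulas must actually apply, meaning $0\le\mathcal L^\theta_{I,S}\le1$ and $\mathcal L^\theta_{I,S}$ is trace class; both follow from the factorization \eqref{Concentration_opt_in_B}. Everything else reduces to the earlier lemmas.
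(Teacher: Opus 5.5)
Your proposal is correct and follows the paper's proof: Lemma \ref{sampling_eigenvalue} caps the number of concentration eigenvalues above the threshold by $n(I^+)$, the inequality $\lambda\le\lambda(1-\lambda)/(1-\gamma)$ turns Lemmas \ref{trace_Lemma} and \ref{norm_Lemma} into $n(I^+)\ge\frac{2^{1-2\rho}}{\pi}\Omega R\csc\theta-O(\log R)$, and shifting the window by $d$ gives the $\liminf$. The only difference is cosmetic: you threshold directly at $\gamma$, whereas the paper introduces an auxiliary $\delta\in(\gamma,1)$, which is not needed.
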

\begin{proof}
Let $I = [-R, R]$, $I^+ = [-R-d, R+d]$, and $N = n(I^+)$. By Lemma \ref{sampling_eigenvalue}, we have 
\begin{align}\label{sampling_eigenvalue_result}
\lambda_N(I, S) \le \gamma < 1,
\end{align}
where $\gamma$ is independent of $R$.
Choose $\delta$ such that $\gamma < \delta < 1$ and define 
$$ N_\delta(I) = \#\{k : \lambda_k(R) > \delta\}.$$
Therefore, \eqref{sampling_eigenvalue_result} implies 
\begin{align}\label{bound3}
N_\delta(I) \le n(I^+).
\end{align}
Indeed, if more than $n(I^+)$ eigenvalues were larger than $\delta$, then $\lambda_{n(I^+)}(I,S) > \delta > \gamma$, directly contradicting \eqref{sampling_eigenvalue_result}. 

We now estimate $N_\delta(I)$ from below. Since $0 \le \lambda_k(I,S) \le 1$, we write
$$\operatorname{Tr}(\mathcal{L}_{I,S}^{\theta}) = \sum_{\lambda_k > \delta} \lambda_k + \sum_{\lambda_k \le \delta} \lambda_k.$$
The first sum satisfies $\sum_{\lambda_k > \delta} \lambda_k \le N_\delta(I)$. For the second sum, $\lambda_k \le \delta$ implies $1 - \lambda_k \ge 1 - \delta$, and hence
$$\lambda_k \le \frac{\lambda_k(1 - \lambda_k)}{1 - \delta}.$$
Therefore,
\begin{align}
\operatorname{Tr}(\mathcal{L}_{I,S}^{\theta}) &\le N_\delta(I) + \frac{1}{1 - \delta} \sum_k \lambda_k(I,S)(1 - \lambda_k(I,S)) \nonumber\\ 
&= N_\delta(I) + \frac{1}{1 - \delta} \left[ \operatorname{Tr}(\mathcal{L}_{I,S}^{\theta}) - \operatorname{Tr}\big((\mathcal{L}_{I,S}^{\theta})^2\big) \right]. 
\end{align}
Consequently, we obtain from Lemmas \ref{trace_Lemma} and \ref{norm_Lemma} that
\begin{align}\label{bound4}
N_\delta(I) &\ge \operatorname{Tr}(\mathcal{L}_{I,S}^{\theta}) - \frac{1}{1 - \delta} \left[ \operatorname{Tr}(\mathcal{L}_{I,S}^{\theta}) - \operatorname{Tr}\big((\mathcal{L}_{I,S}^{\theta})^2\big) \right]\nonumber\\
&\ge \frac{2^{1-2\rho}}{\pi} \Omega R \csc \theta - \operatorname{O}(\log R). 
\end{align}
Combining \eqref{bound3} and \eqref{bound4} yields 
$$n(I^+) \ge \frac{2^{1-2\rho}}{\pi} \Omega R \csc \theta - \operatorname{O}(\log R). $$
Replacing $R$ by $R-d$ shifts the counting window to $I$, yielding
\begin{align}\label{bound5}
n(I) &\ge \frac{2^{1-2\rho}}{\pi} \Omega (R - d) \csc \theta - \operatorname{O}(\log R)\nonumber\\
&\ge \frac{2^{1-2\rho}}{\pi} \Omega R \csc \theta - \operatorname{O}(\log R).
\end{align}
Finally, we obtain from \eqref{bound5} that
$$\liminf_{R \to \infty} \frac{n(I)}{R} \ge \frac{2^{1-2\rho}}{\pi} \Omega \csc \theta.$$
\end{proof}

\begin{theorem}[\textbf{Interpolation}]
Let $S=[-\Omega,\Omega]$. If $\Lambda = \{\lambda_n\}$ is an interpolation set for $\mathcal{B}_{\alpha,\beta}^\theta(S)$, then 
$$\limsup_{R \to \infty} \frac{n([-R,R])}{R} \le \frac{2^{1-2\rho}}{\pi} \Omega \csc \theta.$$
\end{theorem}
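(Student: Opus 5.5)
The plan is to mirror the proof of the Sampling theorem, using the eigenvalue lower bound of Lemma \ref{interpolation_eigenvalue} in place of Lemma \ref{sampling_eigenvalue}.

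First, by Proposition \ref{interpolation_prop}, an interpolation set for $\mathcal B_{\alpha,\beta}^\theta(S)$ is uniformly separated. Interpolation is also stable, so we may fix $d>0$ with separation at least $2d$. For $R>d$ set $I=[-R,R]$, $I^-=[-R+d,R-d]$ and $N=n(I^-)$. Lemma \ref{interpolation_eigenvalue} gives $\lambda_{N-1}(I,S)\ge\delta$, with $\delta\in(0,1)$ independent of $R$. After shrinking $\delta$ if necessary, we may take $\delta<1$. Ordering the eigenvalues decreasingly, at least $N$ of them (indices $0,\dots,N-1$) are $\ge\delta$. Hence
$$n(I^-)\le \#\{k:\lambda_k(I,S)\ge\delta\}=:N^\delta(I).$$

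Next, bound $N^\delta(I)$ from above by traces. For $\lambda_k\ge\delta$ we have $1\le \lambda_k/\delta$. We also have $\lambda_k\le 1$, so $1-\lambda_k\le (1-\lambda_k)\lambda_k/\delta$. Then
$$N^\delta(I)=\sum_{\lambda_k\ge\delta}1=\sum_{\lambda_k\ge\delta}\lambda_k+\sum_{\lambda_k\ge\delta}(1-\lambda_k)\le \operatorname{Tr}(\mathcal L_{I,S}^\theta)+\frac1\delta\sum_k\lambda_k(1-\lambda_k),$$
and the last sum equals $\operatorname{Tr}\big(\mathcal L_{I,S}^\theta-(\mathcal L_{I,S}^\theta)^2\big)$. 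Lemma \ref{trace_Lemma} bounds the first term by $\frac{2^{1-2\rho}}{\pi}\Omega R\csc\theta+O(1)$. Lemma \ref{norm_Lemma} bounds the second by $O(\log R)$. Hence
$$n([-R+d,R-d])\le \frac{2^{1-2\rho}}{\pi}\Omega R\csc\theta+O(\log R).$$

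Finally, replace $R$ by $R+d$. This gives $n([-R,R])\le \frac{2^{1-2\rho}}{\pi}\Omega (R+d)\csc\theta+O(\log R)$. Dividing by $R$ and letting $R\to\infty$ yields the claimed $\limsup$ bound, since $d/R\to0$ and $\log R/R\to0$.

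The only delicate point is the index bookkeeping. Lemma \ref{interpolation_eigenvalue} is stated with index $N-1$, which presupposes that eigenvalues are indexed from $0$ in decreasing order, consistent with the min--max argument over the $N$-dimensional space $E$. I would check that this convention matches the one used in Lemma \ref{sampling_eigenvalue}. If the paper's indexing instead starts at $1$, the count becomes $N-1$, which only changes the bound by $O(1)$ and is harmless for the density conclusion.
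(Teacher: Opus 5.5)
Your proposal is correct and follows the paper's proof essentially step for step. The chain is the same: Lemma \ref{interpolation_eigenvalue} gives $n(I^-)\le\#\{k:\lambda_k(I,S)\ge\delta\}$; the inequality $1-\lambda_k\le\delta^{-1}\lambda_k(1-\lambda_k)$ bounds this count by $\operatorname{Tr}(\mathcal L_{I,S}^\theta)+\delta^{-1}\operatorname{Tr}\big(\mathcal L_{I,S}^\theta-(\mathcal L_{I,S}^\theta)^2\big)$; and Lemmas \ref{trace_Lemma} and \ref{norm_Lemma}, followed by the shift $R\mapsto R+d$, finish the argument. You also make explicit two points the paper leaves implicit: you invoke Proposition \ref{interpolation_prop} to obtain the separation constant $d$, and you check that the eigenvalues are indexed from $0$, which is what the min--max arguments in both eigenvalue lemmas use.
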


\begin{proof}
Let $I = [-R, R]$, $I^- = [-R+d, R-d]$,  and $N = n(I^-)$. By Lemma \ref{interpolation_eigenvalue}, 
\begin{align}\label{interpolation_eigenvalue_result}
\lambda_{N-1}(I, S) \ge \delta.
\end{align}
Let 
$$N_\delta(I) = \#\{k : \lambda_k(I,S) \ge \delta\}.$$
Since the eigenvalues are arranged in non-increasing order, \eqref{interpolation_eigenvalue_result} implies
\begin{align}\label{bound_interp1}
n(I^-) \le N_\delta(I).
\end{align}

We estimate $N_\delta(I)$ from above. Since $0\le \lambda_k\le 1$, for every eigenvalue satisfying $\lambda_k\ge \delta$
\begin{align*}
1-\lambda_k
&\le
\frac{1}{\delta}\lambda_k(1-\lambda_k)\\
1
&\le
\lambda_k+
\frac{1}{\delta}\lambda_k(1-\lambda_k).
\end{align*}
Summing over all $k$ such that $\lambda_k\ge\delta$, we obtain
\begin{align}\label{bound6_interpolation}
N_\delta(I)
&\le
\sum_{\lambda_k\ge\delta}\lambda_k+\frac{1}{\delta}
\sum_{\lambda_k\ge\delta}
\lambda_k(1-\lambda_k)
\nonumber\\
&\le\operatorname{Tr}(\mathcal L_{I,S}^{\theta})
+\frac{1}{\delta}
\left[\operatorname{Tr}(\mathcal L_{I,S}^{\theta})-
\operatorname{Tr}\big((\mathcal L_{I,S}^{\theta})^2\big)
\right].
\end{align}
Using Lemmas \ref{trace_Lemma} and \ref{norm_Lemma}, we obtain from \eqref{bound6_interpolation} that
\begin{align}
N_\delta(I)
\le\frac{2^{1-2\rho}}{\pi}\Omega R\csc\theta+O(\log R).
\end{align}
Combining it with \eqref{interpolation_eigenvalue_result}, 
we obtain
$$n(I^-)
\le\frac{2^{1-2\rho}}{\pi}\Omega R\csc\theta
+O(\log R).$$
Replacing $R$ by $R+d$ shifts the counting window outward from $I^-$ to $I$, yielding
\begin{align}\label{bound_interp3}
n(I) &\le \frac{2^{1-2\rho}}{\pi} \Omega (R + d) \csc \theta + \operatorname{O}(\log R)\nonumber\\
&\le \frac{2^{1-2\rho}}{\pi} \Omega R \csc \theta + \operatorname{O}(\log R).
\end{align}
Finally, we obtain from \eqref{bound_interp3} that
$$\limsup_{R \to \infty} \frac{n(I)}{R} \le \frac{2^{1-2\rho}}{\pi} \Omega \csc \theta.$$
\end{proof}

\subsection{Density Conditions for the Spectrally Bandlimited Space \texorpdfstring{$PW_{\alpha,\beta}^\theta(S)$}{PW(S)}}
We now return to the Paley--Wiener space introduced in Section $\ref{sec4}$. Let
$ S=[-\Omega,\Omega], \quad I=[-R,R].$
No new concentration construction is needed. Indeed, the concentration of
$f\in PW_{\alpha,\beta}^{\theta}(S)$ on the spectral interval $I$ is governed, after applying the inverse FrOC transform, by
$$ P_S\left(\mathcal T_{\alpha,\beta}^{\theta} P_I\mathcal H_{\alpha,\beta}^{\theta}\right)P_S. $$
The nonzero eigenvalues of this operator coincide with those of
$$ P_I \left(\mathcal H_{\alpha,\beta}^{\theta} P_S\mathcal T_{\alpha,\beta}^{\theta}\right)P_I, $$
since the two operators are respectively of the form $A^*A$ and $AA^*$. Thus the finite-window spectral duality established in the preceding subsection remains valid. However, the asymptotic regime is now different: $S$ is fixed while the spectral interval $I=[-R,R]$ expands. Consequently, the corresponding trace asymptotic must be determined separately.

Let $W=R\csc{\theta}$. The diagonal trace calculation reduces to integrals involving the Jacobi function
$\varphi_\mu^{\alpha,\beta}$. We first record the asymptotic estimate needed below. Here
\begin{align}
K_{\alpha,\beta}^\theta(x,y)
&=\int_{-W}^{W}
\mathcal G_{\alpha,\beta}^\theta(x,\mu)
\overline{\mathcal G_{\alpha,\beta}^\theta(y,\mu)}
\,d\sigma_{\alpha,\beta}(\mu)\nonumber\\
&=e^{-\frac{i}{2}(x^2-y^2)\cot{\theta}}\int_{-W}^{W}
G^{\alpha,\beta}_\mu(x)
\overline{G^{\alpha,\beta}_\mu(y)}
\,d\sigma_{\alpha,\beta}(\mu),
\end{align}
where
$$d\sigma_{\alpha,\beta}(\mu)=\left(1-\frac{\rho}{i\mu}\right)
\frac{d\mu}{8\pi |C_{\alpha,\beta}(\mu)|^2}.$$

We now turn to the spectrally bandlimited Paley--Wiener space. The corresponding trace asymptotic requires uniform high-frequency estimates for the Jacobi function on a fixed spatial interval, which we record first.
\begin{lem}\label{Assymptotic_for_PW}
Assume
$ -\frac12<\alpha<0, \quad -\frac12\le \beta\le\alpha, $
and fix $\Omega>0$. Then there exists $C_\Omega>0$ such that, for every $\mu\ge1$ and $0\le x\le\Omega$,
\begin{align}\label{Lem_5.9_A}
|\varphi_\mu^{\alpha,\beta}(x)| \le C_\Omega(1+\mu x)^{-\alpha-\frac12}.
\end{align}
Moreover, for every $0<\delta<\Omega$,
\begin{align}\label{Lem_5.9_B}
A_{\alpha,\beta}(x)^{1/2} \varphi_\mu^{\alpha,\beta}(x) = 2^\alpha\Gamma(\alpha+1)\mu^{-\alpha} x^{1/2}J_\alpha(\mu x) + O_{\delta,\Omega} \left(\mu^{-\alpha-\frac32}\right)
\end{align}
uniformly for $\delta\le x\le \Omega$.
\end{lem}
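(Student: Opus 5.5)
We will derive both estimates from the classical uniform Hilb/Bessel-type asymptotics for Jacobi functions on compact sets, due to Stanton--Tomas and Koornwinder, applied with $x\in[0,\Omega]$ and $\mu\ge1$. Consider the Liouville normal form. Put $u(x)=A_{\alpha,\beta}(x)^{1/2}\varphi_\mu^{\alpha,\beta}(x)$. The Sturm--Liouville equation \eqref{Strum_Liouville} then becomes
$$u''+\Big(\mu^2-q(x)\Big)u=0,\qquad q(x)=\frac{\alpha^2-\frac14}{x^2}+r(x),$$
where the correction $r$ is smooth and bounded on $[0,\Omega]$. This follows from the expansion $A_{\alpha,\beta}(x)=x^{2\alpha+1}(1+O(x^2))$ near $0$, and one computes $r$ explicitly from $(A^{1/2})''/A^{1/2}-\rho^2$. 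The comparison function is $v(x)=2^\alpha\Gamma(\alpha+1)\mu^{-\alpha}x^{1/2}J_\alpha(\mu x)$. It solves the unperturbed equation $v''+(\mu^2-(\alpha^2-\frac14)x^{-2})v=0$, and it has the same normalization $v(x)\sim x^{\alpha+1/2}$ at $0$ as $u$, since $\varphi_\mu(0)=1$ and $J_\alpha(z)\sim (z/2)^\alpha/\Gamma(\alpha+1)$.

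Next, I would write $u$ through variation of constants against the Bessel pair $x^{1/2}J_\alpha(\mu x)$ and $x^{1/2}Y_\alpha(\mu x)$:
$$u(x)=v(x)+\frac{\pi}{2}\int_0^x\big[\,J_\alpha(\mu x)Y_\alpha(\mu t)-J_\alpha(\mu t)Y_\alpha(\mu x)\big]\,x^{1/2}t^{1/2}\,r(t)\,u(t)\,dt .$$
The kernel satisfies $|x^{1/2}t^{1/2}[\cdots]|\lesssim \mu^{-1}\,(\frac{x}{x+\mu^{-1}})^{1/2}\,(\frac{t}{t+\mu^{-1}})^{1/2}\,(\frac{\mu t}{1+\mu t})^{\alpha}$-type bounds for $0<t\le x$. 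These follow from the standard estimates $|J_\alpha(z)|\lesssim z^\alpha(1+z)^{-\alpha-1/2}$ and $|Y_\alpha(z)|\lesssim z^{-\alpha}(1+z)^{\alpha-1/2}$; here $\alpha>-\frac12$, and the hypothesis $-\frac12<\alpha<0$ keeps the $Y_\alpha$ singularity mild. A Gronwall argument on $[0,\Omega]$ then gives $|u(x)|\lesssim |v|_{\max}$-type bounds. Concretely, I expect $|u(x)-v(x)|\le C\mu^{-1}\sup_{t\le x}|v(t)|\cdot$(weight). Since $|v(x)|\lesssim x^{\alpha+1/2}(1+\mu x)^{-\alpha-1/2}$, dividing by $A^{1/2}\asymp x^{\alpha+1/2}$ yields \eqref{Lem_5.9_A}.

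For \eqref{Lem_5.9_B}, restrict to $\delta\le x\le\Omega$. Feed the bound \eqref{Lem_5.9_A} back into the integral, using $|u(t)|\lesssim t^{\alpha+1/2}(1+\mu t)^{-\alpha-1/2}$ and the large-argument kernel bound $O(\mu^{-1}\cdot\mu^{-1/2}\cdots)$. The error integral should then be $O(\mu^{-\alpha-3/2})$: the factor $\mu^{-1}$ comes from the Wronskian normalization, and $\mu^{-\alpha-1/2}$ comes from the decay of $u$ on the bulk of $[0,x]$. The contribution from $t\lesssim\mu^{-1}$ is smaller. I expect the main obstacle to be this bookkeeping of the Volterra kernel near $t\approx\mu^{-1}$ and keeping all constants uniform in $\mu$. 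If it becomes awkward, I would fall back on quoting the Stanton--Tomas expansion $\varphi_\mu(x)=c\,(x^{2\alpha+1}/A(x))^{1/2}\,\frac{J_\alpha(\mu x)}{(\mu x)^\alpha}+$ error, which gives both claims directly.
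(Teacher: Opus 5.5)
Your proposal is correct and follows essentially the paper's proof: the Liouville normal form $u=A_{\alpha,\beta}^{1/2}\varphi_\mu^{\alpha,\beta}$ with a bounded remainder potential, a Volterra equation against the Bessel pair $x^{1/2}J_\alpha(\mu x)$, $x^{1/2}Y_\alpha(\mu x)$, a weighted Gronwall bound $|u(x)|\lesssim x^{\alpha+1/2}(1+\mu x)^{-\alpha-1/2}$, and feeding this back in to get the error $\mu^{-1}\cdot\mu^{-\alpha-1/2}$ (the paper merely rescales to $s=\mu x$ first, and the overall sign of your Volterra integral is flipped, which is harmless since only absolute values enter). One slip to fix: for $-\frac12<\alpha<0$ one has $|Y_\alpha(z)|\asymp z^{\alpha}$ as $z\to0$ (the $\cot(\alpha\pi)J_\alpha$ part dominates), not $z^{-\alpha}$, so the triangle inequality yields only the symmetric kernel bound $\lesssim\mu^{-1}m(\mu x)m(\mu t)$ with $m(s)=\bigl(\frac{s}{1+s}\bigr)^{\alpha+1/2}$, which is exactly what the paper uses and is sufficient; your sharper bound is still true, but only because for $\alpha\notin\mathbb{Z}$ the Green kernel can be rewritten with $J_{-\alpha}$ in place of $Y_\alpha$, and the estimate you quoted is the one valid for $J_{-\alpha}$.
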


\begin{proof}
The Jacobi function satisfies
$$ \varphi_\mu''(x) + \left[ (2\alpha+1)\coth x + (2\beta+1)\tanh x \right]\varphi_\mu'(x) + (\mu^2+\rho^2)\varphi_\mu(x)=0. $$
Put
$$ u_\mu(x) = A_{\alpha,\beta}(x)^{1/2} \varphi_\mu^{\alpha,\beta}(x). $$
Since
$$ \frac{A_{\alpha,\beta}'(x)}{A_{\alpha,\beta}(x)} = (2\alpha+1)\coth x+(2\beta+1)\tanh x, $$
the standard elimination of the first derivative gives
\begin{align}\label{eqn1_Lem_5.9(1)}
 u_\mu''(x) + \left[ \mu^2 + \frac{\frac14-\alpha^2}{\sinh^2x} + \frac{\beta^2-\frac14}{\cosh^2x} \right]u_\mu(x)=0.
 \end{align}
Separate the singular Bessel part by writing
\begin{align}\label{eqn1_Lem_5.9(2)}
 u_\mu''(x) + \left[ \mu^2+\frac{\frac14-\alpha^2}{x^2} \right]u_\mu(x) = -Q_{\alpha,\beta}(x)u_\mu(x),
 \end{align}
where
\begin{align}\label{eqn1_Lem_5.9(3)}
 Q_{\alpha,\beta}(x) = \left(\frac14-\alpha^2\right) \left( \frac1{\sinh^2x}-\frac1{x^2} \right) + \left(\beta^2-\frac14\right)\frac1{\cosh^2x}.
 \end{align}
Now
$$\frac1{\sinh^2x} = \frac1{x^2}-\frac13+O(x^2), \quad x\to0,$$
so $Q_{\alpha,\beta}$ extends continuously to $x=0$. Therefore
\begin{align}\label{eqn1_Lem_5.9(4)}
\sup_{0\le x\le\Omega}|Q_{\alpha,\beta}(x)| <\infty.
\end{align}
Set
$$ s=\mu x, \quad U_\mu(s) = \mu^\nu u_\mu(s/\mu), \quad \nu=\alpha+\frac12. $$

Equation \eqref{eqn1_Lem_5.9(2)} becomes
\begin{align}\label{eqn1_Lem_5.9(5)}
 U_\mu''(s) + \left[ 1+\frac{\frac14-\alpha^2}{s^2} \right]U_\mu(s) = -\frac1{\mu^2} Q_{\alpha,\beta}(s/\mu)U_\mu(s).
 \end{align}
Since
$$ A_{\alpha,\beta}(x)^{1/2} \sim x^{\alpha+1/2} =x^\nu \text{ and } \varphi_\mu(0)=1, $$
we have
\begin{align}\label{eqn1_Lem_5.9(6)}
U_\mu(s)\sim s^\nu, \quad s\downarrow0.
\end{align}
The regular solution of the unperturbed equation
$$ V''+ \left[ 1+\frac{\frac14-\alpha^2}{s^2} \right]V=0 $$
with exactly this normalization is
\begin{align}\label{eqn1_Lem_5.9(7)}
V_\alpha(s) = 2^\alpha\Gamma(\alpha+1)\sqrt{s}\,J_\alpha(s), 
\end{align}
since
$$ J_\alpha(s) = \frac1{\Gamma(\alpha+1)} \left(\frac{s}{2}\right)^\alpha (1+O(s^2)). $$
Take
$$ f_\alpha(s)=\sqrt{s}J_\alpha(s), \quad g_\alpha(s)=\sqrt{s}Y_\alpha(s). $$

Their Wronskian is constant
$$ W(f_\alpha,g_\alpha)=\frac2\pi. $$
Hence the Green kernel of the Bessel equation is
\begin{align}\label{eqn1_Lem_5.9(8)}
 K_\alpha(s,t) = \frac{\pi}{2} \left[ f_\alpha(t)g_\alpha(s) - g_\alpha(t)f_\alpha(s) \right], \quad 0<t\le s. 
 \end{align}
Thus $U_\mu$ satisfies the Volterra equation
\begin{align}\label{eqn1_Lem_5.9(9)}
U_\mu(s) = V_\alpha(s) - \frac1{\mu^2} \int_0^s K_\alpha(s,t) Q_{\alpha,\beta}(t/\mu) U_\mu(t)\,dt.
\end{align}
Now define
$$ m(s) = \left(\frac{s}{1+s}\right)^\nu. $$
Since $0<\nu<\frac12$, the elementary Bessel estimates imply
\begin{align}\label{eqn1_Lem_5.9(10)}
|\sqrt{s}J_\alpha(s)|+ |\sqrt{s}Y_\alpha(s)| \le C_\alpha m(s), \quad s>0. 
\end{align}
Indeed, for $0<s\le1$,
$$ \sqrt{s}J_\alpha(s)=O(s^\nu), $$
while, because $-\frac12<\alpha<0$,
$$ \sqrt{s}Y_\alpha(s) = O(s^\nu)+O(s^{1-\nu}) = O(s^\nu). $$
For $s\ge1$, both quantities are $O(1)$.
Consequently, from \eqref{eqn1_Lem_5.9(8)},
\begin{align}\label{eqn1_Lem_5.9(11)}
|K_\alpha(s,t)| \le C_\alpha m(s)m(t) \le C_\alpha\frac{m(s)}{m(t)}, \quad 0<t\le s. 
\end{align}
Also
\begin{align}\label{eqn1_Lem_5.9(12)}
 |V_\alpha(s)|\le C_\alpha m(s).
 \end{align}
Divide \eqref{eqn1_Lem_5.9(9)} by $m(s)$. Using \eqref{eqn1_Lem_5.9(4)}, \eqref{eqn1_Lem_5.9(11)}, and \eqref{eqn1_Lem_5.9(12)},
$$ \frac{|U_\mu(s)|}{m(s)} \le C+ \frac{C_\Omega}{\mu^2} \int_0^s \frac{|U_\mu(t)|}{m(t)}\,dt. $$
Gronwall's inequality yields
$$ \frac{|U_\mu(s)|}{m(s)} \le C \exp\left(\frac{C_\Omega s}{\mu^2}\right). $$
Since $0\le s\le\mu\Omega$,
$$ \frac{s}{\mu^2}\le\frac{\Omega}{\mu}\le\Omega, $$
and therefore
\begin{align}\label{eqn1_Lem_5.9(13)}
|U_\mu(s)|\le C_\Omega m(s), \quad 0\le s\le\mu\Omega.
\end{align}
Returning to $x$,
$$ |u_\mu(x)| = \mu^{-\nu}|U_\mu(\mu x)| \le C_\Omega \mu^{-\nu} \left( \frac{\mu x}{1+\mu x} \right)^\nu, $$
hence
\begin{align}\label{eqn1_Lem_5.9(14)}
|u_\mu(x)| \le C_\Omega \frac{x^\nu}{(1+\mu x)^\nu}.
\end{align}
On the fixed interval $0\le x\le\Omega$,
$$ A_{\alpha,\beta}(x)^{1/2}\asymp_\Omega x^\nu. $$
Since $u_\mu=A^{1/2}\varphi_\mu$, we obtain
$$ |\varphi_\mu^{\alpha,\beta}(x)| \le C_\Omega (1+\mu x)^{-\nu}, $$
which is precisely \eqref{Lem_5.9_A}.
For the sharper estimate away from $x=0$, subtract $V_\alpha$ in \eqref{eqn1_Lem_5.9(9)}. From \eqref{eqn1_Lem_5.9(11)} and \eqref{eqn1_Lem_5.9(13)},
\begin{align}\label{eqn1_Lem_5.9(15)}
 |U_\mu(s)-V_\alpha(s)| \le \frac{C_\Omega}{\mu^2} m(s)\int_0^s \frac{|U_\mu(t)|}{m(t)}dt \le C_\Omega\frac{s}{\mu^2}m(s).
 \end{align}
If $x\in[\delta,\Omega]$, then $s=\mu x\ge\mu\delta$. Thus, for large $\mu$, $m(s)\asymp1$, and since $s\le\mu\Omega$,
$$ U_\mu(s)-V_\alpha(s) = O_{\delta,\Omega}(\mu^{-1}). $$
Multiplication by $\mu^{-\nu}$ gives
$$ u_\mu(x) = \mu^{-\nu}V_\alpha(\mu x) + O_{\delta,\Omega}(\mu^{-\nu-1}). $$
Using \eqref{eqn1_Lem_5.9(7)},
$$ \mu^{-\nu}V_\alpha(\mu x) = 2^\alpha\Gamma(\alpha+1) \mu^{-\alpha}x^{1/2}J_\alpha(\mu x), $$
while
$$ \mu^{-\nu-1} = \mu^{-\alpha-\frac32}. $$
Therefore,
$$ A_{\alpha,\beta}(x)^{1/2} \varphi_\mu^{\alpha,\beta}(x) = 2^\alpha\Gamma(\alpha+1) \mu^{-\alpha}x^{1/2}J_\alpha(\mu x) + O_{\delta,\Omega} (\mu^{-\alpha-\frac32}), $$
which proves \eqref{Lem_5.9_B}.
\end{proof}

Using the preceding Jacobi-function estimates, we next determine the leading asymptotic behaviour of the integral appearing in the diagonal trace calculation.
\begin{lem}\label{jacobi_weyl}
Assume $\alpha\geq\beta\geq-\frac12$, with $\alpha>-\frac12$. For any fixed $\Omega>0$, 
$$\int_0^\Omega \left( \int_0^W \vert{}\varphi_{\mu}^{\alpha,\beta}(x)\vert{}^2 \frac{d\mu}{2\pi\vert{}C_{\alpha,\beta}(\mu)\vert{}^2} \right) A_{\alpha,\beta}(x)\,dx = \frac{2^{-2\rho}}{\pi}\Omega W + o(W), \quad W \to \infty.$$
\end{lem}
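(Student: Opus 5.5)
The plan is a local Weyl law: on compact $x$-sets away from the origin, the Jacobi function behaves like a normalized Bessel function, and the Plancherel density behaves like a power of $\mu$ that exactly compensates the Bessel amplitude. Write the integrand as
\[
F(\mu,x):=\frac{A_{\alpha,\beta}(x)\,|\varphi_\mu^{\alpha,\beta}(x)|^2}{2\pi|C_{\alpha,\beta}(\mu)|^2}.
\]
Fix $\mu_0\ge1$ and $0<\delta<\Omega$, and split the domain $[0,W]\times[0,\Omega]$ into three regions:
\[
[0,\mu_0]\times[0,\Omega],\qquad [\mu_0,W]\times[0,\delta],\qquad [\mu_0,W]\times[\delta,\Omega].
\]
The first region contributes $O(1)$, since $|\varphi_\mu|\le1$ and $|C_{\alpha,\beta}(\mu)|^{-2}=O(\mu^2)$ near $0$, as recorded in the proof of Lemma \ref{norm_Lemma}.

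The first ingredient is the large-$\mu$ behaviour of the density. Applying Stirling's formula to $|\Gamma(i\mu)|^2=\pi/(\mu\sinh\pi\mu)$ and to $|\Gamma(a+i\mu/2)|^2\sim 2\pi(\mu/2)^{2a-1}e^{-\pi\mu/2}$ in the definition of $C_{\alpha,\beta}$ should give
\[
\frac{1}{2\pi|C_{\alpha,\beta}(\mu)|^2}=\frac{\mu^{2\alpha+1}}{2^{2\rho+2\alpha}\Gamma(\alpha+1)^2}\bigl(1+O(\mu^{-1})\bigr),\qquad \mu\to\infty,
\]
using that the two exponents satisfy $(\rho-1)+(\alpha-\beta)=2\alpha$.

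On the strip $[\mu_0,W]\times[0,\delta]$, estimate \eqref{Lem_5.9_A} gives $|\varphi_\mu|^2\le C(1+\mu x)^{-2\nu}$ with $\nu=\alpha+\frac12$. Combined with $A_{\alpha,\beta}(x)\asymp x^{2\nu}$ and the density asymptotic, this yields
\[
F(\mu,x)\le C\Bigl(\frac{\mu x}{1+\mu x}\Bigr)^{2\nu}\le C.
\]
Hence this strip contributes at most $C\delta W$.

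On $[\mu_0,W]\times[\delta,\Omega]$, I will insert \eqref{Lem_5.9_B} together with the Bessel asymptotic $\sqrt{s}J_\alpha(s)=\sqrt{2/\pi}\cos(s-\tfrac{\alpha\pi}{2}-\tfrac{\pi}{4})+O(s^{-1})$. The main term then becomes
\[
F(\mu,x)=\frac{2^{-2\rho}}{\pi}\Bigl(1+\sin(2\mu x-\alpha\pi)\Bigr)+O_{\delta,\Omega}(\mu^{-1}),
\]
since the prefactor $2^{2\alpha}\Gamma(\alpha+1)^2\mu^{-2\alpha-1}\cdot\frac{2}{\pi}$ cancels exactly against the density. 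The error terms, namely the cross term from \eqref{Lem_5.9_B}, the Bessel remainder and the $O(\mu^{-1})$ in the density, are all $O(\mu^{-1})$ uniformly in $x\in[\delta,\Omega]$. They therefore integrate to $O(\log W)$. The oscillatory term integrates in $x$ to $O(1/\mu)$, and hence also contributes only $O(\log W)$. The constant term gives $\frac{2^{-2\rho}}{\pi}(\Omega-\delta)(W-\mu_0)$.

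Collecting the three regions, the difference between the double integral and $\frac{2^{-2\rho}}{\pi}\Omega W$ is at most
\[
C\delta W+O_\delta(\log W)+O(1).
\]
Dividing by $W$, letting $W\to\infty$ and then $\delta\to0$ gives $o(W)$.

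The main obstacle is that Lemma \ref{Assymptotic_for_PW} is proved only for $-\frac12<\alpha<0$, whereas the statement allows any $\alpha\ge\beta\ge-\frac12$ with $\alpha>-\frac12$. Outside that range I would first try to rerun the same Volterra/Gronwall argument, replacing the weight $m(s)$ by $s^{\nu}(1+s)^{-\nu}$ for the $J_\alpha$ part and handling the $Y_\alpha$ factor in the Green kernel via $|K_\alpha(s,t)|\le C\,(s/t)^{\nu}$-type bounds for $t\le s\le1$. If that fails, I would instead invoke the known uniform Bessel-type (Stanton--Tomas) expansion of Jacobi functions, which yields the same main term and the same $O(\mu^{-1})$ errors on $[\delta,\Omega]$, as well as the bound $|\varphi_\mu(x)|\le C(1+\mu x)^{-\nu}$ needed near the origin.
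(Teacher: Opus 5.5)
Your proposal is correct and follows essentially the paper's route: discard bounded $\mu$, bound the strip $x\in[0,\delta]$ by $C\delta W$ using \eqref{Lem_5.9_A}, extract the main term on $[\delta,\Omega]$ from \eqref{Lem_5.9_B} together with the Stirling asymptotics of $|C_{\alpha,\beta}(\mu)|^{-2}$, and then let $\delta\to0$; integrating the pointwise Bessel asymptotics instead of using the Lommel identity \eqref{Lem_5.10_eqn6} is only a cosmetic difference. For $\alpha\ge0$ the paper simply cites Wong--Wang, and your Volterra fallback also goes through: the Gronwall step only needs $|K_\alpha(s,t)|\le C\,m(s)/m(t)$ for $0<t\le s$, and this holds for every $\alpha>-\frac12$ because $|\sqrt{s}J_\alpha(s)|\le C\,m(s)$ and $|\sqrt{s}Y_\alpha(s)|\le C/m(s)$.
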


\begin{proof}
Let
$$J_\Omega(W) = \int_0^\Omega \left( \int_0^W |\varphi_\mu^{\alpha,\beta}(x)|^2 \frac{d\mu}{2\pi |C_{\alpha,\beta}(\mu)|^2} \right) A_{\alpha,\beta}(x)\,dx.$$
The contribution of $0\leq\mu\leq1$ is bounded independently of $W$. Hence it is enough to consider $\mu\geq1$.

Fix $0<\delta<\Omega$, and write
$J_\Omega(W) = I_{\delta}^{(1)}(W) + J_\omega{\delta}^{(2)}(W)$, where
$$J_{\delta}^{(1)}(W) = \int_\delta^\Omega \left( \int_0^W |\varphi_\mu^{\alpha,\beta}(x)|^2 \frac{d\mu}{2\pi |C_{\alpha,\beta}(\mu)|^2} \right) A_{\alpha,\beta}(x)\,dx$$
and
$$J_{\delta}^{(2)}(W) = \int_0^\delta \left( \int_0^W |\varphi_\mu^{\alpha,\beta}(x)|^2 \frac{d\mu}{2\pi \vert{}C_{\alpha,\beta}(\mu)\vert{}^2} \right) A_{\alpha,\beta}(x)\,dx.$$

Using equation (5.2) in \cite{Wong_and_Wang}, we obtain that for $\alpha\ge 0$
\begin{align}\label{Lem_5.10_eqn1}
A_{\alpha,\beta}(x)^{1/2} \varphi_\mu^{\alpha,\beta}(x) = 2^\alpha\Gamma(\alpha+1) \mu^{-\alpha}x^{1/2}J_\alpha(\mu x) + O_{\delta,\Omega} (\mu^{-\alpha-\frac32}).
\end{align}
From Lemma \ref{Assymptotic_for_PW}, we obtain \eqref{Lem_5.10_eqn1} also holds for $\alpha>-\frac{1}{2}$.

Now, Stirling's formula yields
\begin{align}\label{Lem_5.10_eqn2}
 |C_{\alpha,\beta}(\mu)|^2 = \frac{ 2^{2\rho+2\alpha-1} \Gamma(\alpha+1)^2 }{\pi} \mu^{-2\alpha-1} \left(1+O(\mu^{-1})\right). 
 \end{align}
Therefore
\begin{align}\label{Lem_5.10_eqn3}
\frac1{2\pi|C_{\alpha,\beta}(\mu)|^2} = \frac{ \mu^{2\alpha+1} }{ 2^{2\rho+2\alpha}\Gamma(\alpha+1)^2 } \left(1+O(\mu^{-1})\right). 
\end{align}
Combining \eqref{Lem_5.10_eqn1} with \eqref{Lem_5.10_eqn3}, uniformly for $x\in[\delta,\Omega]$,
\begin{align}\label{Lem_5.10_eqn4}
A_{\alpha,\beta}(x) |\varphi_\mu^{\alpha,\beta}(x)|^2 \frac1{2\pi|C_{\alpha,\beta}(\mu)|^2} = 2^{-2\rho} \mu xJ_\alpha(\mu x)^2 + O_{\delta,\Omega}(\mu^{-1}).
\end{align}
Hence
\begin{align}\label{Lem_5.10_eqn5}
J_{\delta,\Omega}(W) = 2^{-2\rho} \int_1^W \mu \left( \int_\delta^\Omega xJ_\alpha(\mu x)^2\,dx \right)d\mu + O_{\delta,\Omega}(\log W). 
\end{align}
Using the identity
\begin{align}\label{Lem_5.10_eqn6}
 \int_0^a xJ_\alpha(\mu x)^2\,dx = \frac{a^2}{2} \left[ J_\alpha(\mu a)^2 - J_{\alpha-1}(\mu a) J_{\alpha+1}(\mu a) \right], 
 \end{align}
and the classical large-argument Bessel asymptotics,
$$ J_\alpha(z) = \sqrt{\frac2{\pi z}} \cos\left( z-\frac{\pi\alpha}{2}-\frac{\pi}{4} \right) + O(z^{-3/2}), $$
one obtains
$$ J_\alpha(z)^2 - J_{\alpha-1}(z)J_{\alpha+1}(z) = \frac{2}{\pi z}+O(z^{-2}). $$
Therefore, for fixed $a>0$,
\begin{align}\label{Lem_5.10_eqn7}
\int_0^a xJ_\alpha(\mu x)^2\,dx = \frac{a}{\pi\mu} + O_a(\mu^{-2}),
\end{align}
and hence
\begin{align}\label{Lem_5.10_eqn8}
\int_\delta^\Omega xJ_\alpha(\mu x)^2\,dx = \frac{\Omega-\delta}{\pi\mu} + O_{\delta,\Omega}(\mu^{-2}).
\end{align}
Substitution in \eqref{Lem_5.10_eqn5} gives
\begin{align}\label{Lem_5.10_eqn9}
J_{\delta,\Omega}(W) = \frac{2^{-2\rho}}{\pi} (\Omega-\delta)W + O_{\delta,\Omega}(\log W).
\end{align}
It remains to control $J_{0,\delta}(W)$.
For $-\frac12<\alpha<0$, estimate \eqref{Lem_5.9_A}, together with
$$ A_{\alpha,\beta}(x) \le C_\Omega x^{2\alpha+1} $$
and \eqref{Lem_5.10_eqn3}, yields
$$ A_{\alpha,\beta}(x) |\varphi_\mu^{\alpha,\beta}(x)|^2 \frac1{2\pi|C_{\alpha,\beta}(\mu)|^2} \le C_\Omega \left( \frac{\mu x}{1+\mu x} \right)^{2\alpha+1}. $$
Since $2\alpha+1>0$,
$$ A_{\alpha,\beta}(x) |\varphi_\mu^{\alpha,\beta}(x)|^2 \frac1{2\pi|C_{\alpha,\beta}(\mu)|^2} \le C_\Omega.$$

For $\alpha\ge0$, the uniform Wong–Wang \cite{Wong_and_Wang} expansion together with its remainder estimate in (3.5) for $n=0$ gives the same bound on every fixed interval $0\le x\le\Omega$. Their expansion is uniform in the spatial variable and is accompanied by explicit error control. Thus, for every $\alpha>-\frac12$,
\begin{align}\label{Lem_5.10_eqn10}
0\le J_{0,\delta}(W) \le C_\Omega\delta W.
\end{align}
Since all integrands are nonnegative, $J_\Omega(W)\ge J_{\delta,\Omega}(W)$, and hence
$$ \liminf_{W\to\infty} \frac{J_\Omega(W)}{W} \ge \frac{2^{-2\rho}}{\pi}(\Omega-\delta), $$
whereas \eqref{Lem_5.10_eqn9} and \eqref{Lem_5.10_eqn10} give
$$ \limsup_{W\to\infty} \frac{J_\Omega(W)}{W} \le \frac{2^{-2\rho}}{\pi}(\Omega-\delta) + C_\Omega\delta. $$
Finally, letting $\delta\downarrow0$, we obtain
$$ \lim_{W\to\infty} \frac{J_\Omega(W)}{W} = \frac{2^{-2\rho}\Omega}{\pi}. $$
Equivalently,
$$J_\Omega(W)= \frac{2^{-2\rho}\Omega}{\pi}W+o(W).$$
This proves the lemma. 
\end{proof}

We can now combine the preceding asymptotic estimate with the diagonal representation of the concentration operator to obtain its trace asymptotic in the Paley--Wiener setting.
\begin{lem}\label{4.11_Lemma}
Let $S=[-\Omega,\Omega]$, $I=[-R,R]$, and put
$W=R\csc\theta$. Then
\begin{align}
\operatorname{Tr}(\mathbf{L}_{I,S}^\theta)& = \frac{2^{1-2\rho}}{\pi} \Omega R \csc \theta + \operatorname{o}(R),\quad R\to \infty
\end{align}
\end{lem}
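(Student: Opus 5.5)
We plan to reduce the trace to the diagonal of the kernel $K^\theta_{\alpha,\beta}$ and then invoke Lemma \ref{jacobi_weyl}. The operator $\mathbf L^\theta_{I,S}$ is unitarily equivalent to $P_S\mathcal T^\theta_{\alpha,\beta}P_I\mathcal H^\theta_{\alpha,\beta}P_S$ acting on $L^2(\mathbb R,A_{\alpha,\beta})$. Here $S=[-\Omega,\Omega]$ is the fixed spatial window and $I=[-R,R]$ is the expanding spectral window. The integral kernel is $\mathbf 1_S(x)K^\theta_{\alpha,\beta}(x,y)\mathbf 1_S(y)$, where the spectral integral runs over $\mu=\lambda\csc\theta\in[-W,W]$. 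Hence
\[
\operatorname{Tr}(\mathbf L^\theta_{I,S})=\int_{-\Omega}^{\Omega}\left(\int_{-W}^{W}|G^{\alpha,\beta}_\mu(x)|^2\,d\sigma_{\alpha,\beta}(\mu)\right)A_{\alpha,\beta}(x)\,dx .
\]
The chirp factors have modulus one, so they drop out on the diagonal. By \eqref{abs(G_mu)2_calculation}, $|G_\mu|^2$ is even in $\mu$. The odd part $-\rho/(i\mu)$ of the Plancherel density therefore integrates to zero, exactly as in Lemma \ref{trace_Lemma}. Splitting $|G_\mu|^2$ and using that the cross term is odd in $x$ gives
\[
\operatorname{Tr}(\mathbf L^\theta_{I,S})=4\int_0^\Omega\!\!\int_0^W\left(|\varphi_\mu(x)|^2+\frac{|\varphi_\mu'(x)|^2}{\rho^2+\mu^2}\right)\frac{d\mu}{8\pi|C_{\alpha,\beta}(\mu)|^2}\,A_{\alpha,\beta}(x)\,dx .
\]

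The first term is $\int_0^\Omega\int_0^W|\varphi_\mu|^2\,\frac{d\mu}{2\pi|C|^2}\,A\,dx$. Lemma \ref{jacobi_weyl} evaluates it as $\frac{2^{-2\rho}}{\pi}\Omega W+o(W)$. For the derivative term, we integrate by parts in $x$ for each fixed $\mu$, as in the derivation of \eqref{I(R)}. Using \eqref{Strum_Liouville}, this gives
\[
\int_0^\Omega A|\varphi_\mu'|^2\,dx=A(\Omega)\varphi_\mu(\Omega)\varphi_\mu'(\Omega)+(\mu^2+\rho^2)\int_0^\Omega A\varphi_\mu^2\,dx .
\]
After division by $\rho^2+\mu^2$, the bulk part equals the first term again. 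The two together yield $2\cdot\frac{2^{-2\rho}}{\pi}\Omega W+o(W)=\frac{2^{1-2\rho}}{\pi}\Omega R\csc\theta+o(R)$.

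It remains to show that the boundary contribution
\[
B(W)=\int_0^W\frac{A(\Omega)\varphi_\mu(\Omega)\varphi_\mu'(\Omega)}{(\rho^2+\mu^2)|C_{\alpha,\beta}(\mu)|^2}\,d\mu
\]
is $o(W)$; this is the main obstacle. At the fixed point $x=\Omega>0$, use \eqref{Lem_5.9_B} (or the Wong--Wang expansion for $\alpha\ge0$). It gives $A(\Omega)^{1/2}\varphi_\mu(\Omega)=O(\mu^{-\alpha-1/2})$. Differentiating, either via \eqref{Sec4_Eqn2} with the analogous bound for $\varphi^{\alpha+1,\beta+1}_\mu$ or via the Bessel--Volterra representation, gives $\varphi_\mu'(\Omega)=O(\mu^{-\alpha+1/2})$. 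By \eqref{Lem_5.10_eqn3}, $1/|C|^2\asymp\mu^{2\alpha+1}$. The integrand is therefore $O(\mu^{-\alpha-1/2}\mu^{-\alpha+1/2}\mu^{2\alpha+1}\mu^{-2})=O(\mu^{-1})$, so $B(W)=O(\log W)=o(W)$.

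I expect the delicate point to be justifying the derivative bound uniformly in $\mu$. I would try \eqref{Sec4_Eqn2} first: it reduces the estimate to $\sinh(2\Omega)\mu^2|\varphi^{\alpha+1,\beta+1}_\mu(\Omega)|$, which is $O(\mu^{2-\alpha-3/2})$ by the same Bessel asymptotics at parameter $\alpha+1$. This is the required $O(\mu^{-\alpha+1/2})$. The region $0\le\mu\le1$ contributes $O(1)$ because $1/|C|^2=O(\mu^2)$ there.
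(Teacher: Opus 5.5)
Your proposal is correct and follows essentially the same route as the paper. Both arguments reduce the trace to the diagonal integral of $|G_\mu^{\alpha,\beta}|^2$ against the even part of $d\sigma_{\alpha,\beta}$, integrate by parts in $x$ to get $2J_\Omega(W)$ plus the boundary term at $x=\Omega$, and show that boundary term is $O(\log W)$ using the Bessel-type asymptotics, the derivative identity \eqref{Sec4_Eqn2}, and $|C_{\alpha,\beta}(\mu)|^{-2}=O(\mu^{2\alpha+1})$. Your note that the bound for $\varphi_\mu^{\alpha+1,\beta+1}(\Omega)$ needs the Wong--Wang expansion (Lemma \ref{Assymptotic_for_PW} only covers $-\frac12<\alpha<0$) justifies a step the paper states in one line.
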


\begin{proof}
By the definition of the concentration operator,
$$ \operatorname{Tr}\big(\mathbf L_{I,S}^{\theta}\big) = \int_{-\Omega}^{\Omega} \int_{-R}^{R} \left| \mathcal G_{\alpha,\beta}^{\theta}(x,\lambda) \right|^2 \,d\sigma_{\alpha,\beta}(\lambda\csc\theta) A_{\alpha,\beta}(x)\,dx . $$
Let
$\mu=\lambda\csc\theta$, $W=R\csc\theta$. Since
$$ \mathcal G_{\alpha,\beta}^{\theta}(x,\lambda) = e^{-\frac{i}{2}(x^2+\lambda^2)\cot\theta} G_\mu^{\alpha,\beta}(x), $$
we have
$$ \operatorname{Tr}\big(\mathbf L_{I,S}^{\theta}\big) = \int_{-\Omega}^{\Omega} \int_{-R}^{R} \left|G^{\alpha,\beta}_{\mu}(x) \right|^2 \,d\sigma_{\alpha,\beta}(\lambda\csc\theta) A_{\alpha,\beta}(x)\,dx . $$
Using \eqref{abs(G_mu)2_calculation} and the odd part of the Plancherel density over $[-W,W]$, we obtain
\begin{align}\label{Trace_PW_int}
\operatorname{Tr}\big(\mathbf L_{I,S}^{\theta}\big) &= \int_{0}^{\Omega} \int_{0}^{W} \frac{|\varphi_{\mu}^{\alpha,\beta}(x)|^2}{2\pi|C_{\alpha,\beta}(\mu)|^2} \, d\mu \, A_{\alpha,\beta}(x)dx \nonumber\\
&\hspace{2cm}+ \int_{0}^{W} \frac{1}{\rho^2 + \mu^2} \left( \int_{0}^{\Omega} |(\varphi_{\mu}^{\alpha,\beta})'(x)|^2 A_{\alpha,\beta}(x)dx \right) \frac{d\mu}{2\pi|C_{\alpha,\beta}(\mu)|^2}.
\end{align}
By the definition in Lemma $\ref{jacobi_weyl}$, the first term is precisely
$J_\Omega(W)$. For the second term, using the Jacobi equation in Sturm--Liouville form \eqref{Strum_Liouville}, and integration by parts over $[0,\Omega]$ gives
\begin{align}\label{integration_by_parts_in_PW}
\int_0^\Omega |(\varphi_\mu^{\alpha,\beta})'(x)|^2 A_{\alpha,\beta}(x)\,dx =& A_{\alpha,\beta}(\Omega) \varphi_\mu^{\alpha,\beta}(\Omega) (\varphi_\mu^{\alpha,\beta})'(\Omega) \nonumber\\ 
&+ (\mu^2+\rho^2) \int_0^\Omega |\varphi_\mu^{\alpha,\beta}(x)|^2 A_{\alpha,\beta}(x)\,dx . 
\end{align}
The boundary term at $x=0$ vanishes because
$(\varphi_\mu^{\alpha,\beta})'(0)=0.$
Substituting \eqref{integration_by_parts_in_PW} into \eqref{Trace_PW_int}, we obtain
\begin{align}\label{final_trace}
\operatorname{Tr}\big(\mathbf L_{I,S}^{\theta}\big) ={}& 2J_\Omega(W)+ \int_0^W \frac{ A_{\alpha,\beta}(\Omega) \varphi_\mu^{\alpha,\beta}(\Omega) (\varphi_\mu^{\alpha,\beta})'(\Omega) }{ \rho^2+\mu^2 } \frac{d\mu} {2\pi|C_{\alpha,\beta}(\mu)|^2}. 
\end{align}

For fixed $\Omega>0$, the Jacobi--Bessel asymptotic used in Lemma
$\ref{jacobi_weyl}$ gives
\begin{align}\label{Asymptotic_PW_1}
 A_{\alpha,\beta}(\Omega)^{1/2} \varphi_\mu^{\alpha,\beta}(\Omega) = O(\mu^{-\alpha-\frac12}), \quad \mu\to\infty.
 \end{align}
For the derivative, recall the identity
\begin{align}\label{derivative_varphi}
(\varphi_\mu^{\alpha,\beta})'(x) = -\frac{\rho^2+\mu^2}{4(\alpha+1)} \sinh(2x) \varphi_\mu^{\alpha+1,\beta+1}(x).
\end{align}
Since $\Omega$ is fixed, \eqref{derivative_varphi} yields
\begin{align}\label{Asymptotic_PW_2}
A_{\alpha,\beta}(\Omega)^{1/2} (\varphi_\mu^{\alpha,\beta})'(\Omega) = O(\mu^{-\alpha+\frac12}).
\end{align}
Moreover, by Stirling's formula,
\begin{align}\label{Asymptotic_PW_3}
|C_{\alpha,\beta}(\mu)|^{-2} = O(\mu^{2\alpha+1}). 
\end{align}
Combining \eqref{Asymptotic_PW_1}, \eqref{Asymptotic_PW_2}, and \eqref{Asymptotic_PW_3}, we obtain
$$\frac{ A_{\alpha,\beta}(\Omega) \varphi_\mu^{\alpha,\beta}(\Omega) (\varphi_\mu^{\alpha,\beta})'(\Omega) }{ \rho^2+\mu^2 } \frac{1} {2\pi|C_{\alpha,\beta}(\mu)|^2}= O\left( \mu^{-\alpha-\frac12} \mu^{-\alpha+\frac12} \mu^{-2} \mu^{2\alpha+1} \right) = O(\mu^{-1}).$$
Thus
$$ \int_1^W \frac{ A_{\alpha,\beta}(\Omega) \varphi_\mu^{\alpha,\beta}(\Omega) (\varphi_\mu^{\alpha,\beta})'(\Omega) }{ \rho^2+\mu^2 } \frac{d\mu} {2\pi|C_{\alpha,\beta}(\mu)|^2} = O(\log W).$$
The contribution from $0\le\mu\le1$ is bounded. Hence \eqref{final_trace} gives
$$\operatorname{Tr}\big(\mathbf L_{I,S}^{\theta}\big) = 2J_\Omega(W)+O(\log W).$$
By Lemma $\ref{jacobi_weyl}$,
$$ J_\Omega(W) = \frac{2^{-2\rho}}{\pi}\Omega W+o(W). $$
Since $|S|=2\Omega$ and $W=R\csc\theta$,
we conclude that
\begin{align} 
\operatorname{Tr}\big(\mathbf L_{I,S}^{\theta}\big) &= \frac{2^{1-2\rho}}{\pi}\Omega W+o(W) \nonumber\\
&= \frac{2^{-2\rho}}{\pi}(2\Omega)W+o(W)=\frac{2^{-2\rho}}{\pi} |S|R\csc\theta+o(R). \nonumber 
\end{align}
\end{proof}

We can now state our final density result for $PW_{\alpha,\beta}^{\theta}(S)$.
\begin{theorem}
Let $S=[-\Omega,\Omega]$, $\alpha\ge\beta\ge-\frac12$, $\alpha>-\frac12$, and $0<\theta<\pi$. Suppose that
$$ \operatorname{Tr}\bigl(\mathbf{L}_{I,S}^{\theta}-(\mathbf{L}_{I,S}^{\theta})^2\bigr)=o(R), \quad I=[-R,R]. $$

If $\Lambda$ is a separated sampling set for $PW_{\alpha,\beta}^{\theta}(S)$, then
$$ \liminf_{R\to\infty}\frac{n([-R,R])}{R} \ge \frac{2^{1-2\rho}}{\pi}\Omega\csc\theta. $$

If $\Lambda$ is a separated interpolation set, then
$$ \limsup_{R\to\infty}\frac{n([-R,R])}{R} \le \frac{2^{1-2\rho}}{\pi}\Omega\csc\theta. $$
\end{theorem}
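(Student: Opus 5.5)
The argument follows the template of the two density theorems for $\mathcal B_{\alpha,\beta}^{\theta}(S)$, with the roles of space and spectrum interchanged. Functions $f\in PW_{\alpha,\beta}^{\theta}(S)$ are sampled in the spectral variable $\lambda$. The relevant concentration operator is $\mathbf L_{I,S}^{\theta}=P_S\bigl(\mathcal T_{\alpha,\beta}^{\theta}P_I\mathcal H_{\alpha,\beta}^{\theta}\bigr)P_S$, or equivalently $D P_I D$ with $D=\mathcal H_{\alpha,\beta}^{\theta}P_S\mathcal T_{\alpha,\beta}^{\theta}$ the projection onto $PW_{\alpha,\beta}^{\theta}(S)$ inside $L^2(\mathbb R,\sigma_{\alpha,\beta}(\lambda\csc\theta))$. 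Its eigenvalues $\lambda_k(I,S)\in[0,1]$ are the maxima of the Rayleigh quotients
$$\frac{\int_I|f(\lambda)|^2\,d\sigma_{\alpha,\beta}(\lambda\csc\theta)}{\|f\|^2},\qquad f\in PW_{\alpha,\beta}^{\theta}(S).$$
The two analytic inputs are Lemma \ref{4.11_Lemma}, which gives $\operatorname{Tr}(\mathbf L_{I,S}^\theta)=\frac{2^{1-2\rho}}{\pi}\Omega R\csc\theta+o(R)$, and the hypothesis $\operatorname{Tr}(\mathbf L-\mathbf L^2)=o(R)$. Together they play the role that Lemmas \ref{trace_Lemma} and \ref{norm_Lemma} played earlier.

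\emph{Step 1 (local Plancherel--P\'olya in the spectral variable).} I would prove that there is $C_{S,d}$, independent of $\lambda$, with
$$|f(\lambda)|^2\le C_{S,d}\int_{\lambda-d}^{\lambda+d}|f(\eta)|^2\,d\sigma_{\alpha,\beta}(\eta\csc\theta)$$
for all $f\in PW_{\alpha,\beta}^{\theta}(S)$. Write $f(\lambda)=e^{-\frac i2\lambda^2\cot\theta}\,\mathcal H_{\alpha,\beta}(g)(\lambda\csc\theta)$ with $g$ supported in $[-\Omega,\Omega]$. Then $F(\mu)=\mathcal H_{\alpha,\beta}(g)(\mu)$ is entire of exponential type $\Omega$ with $F'$ controlled by $\|g\|$, because $|\partial_\mu G_\mu^{\alpha,\beta}(x)|$ is bounded for $|x|\le\Omega$. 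A sub-mean-value estimate for $|F|^2$ on a disc of radius $d\csc\theta$ then gives the inequality. This step is the main obstacle, because the Plancherel density $\frac{1}{|C_{\alpha,\beta}(\mu)|^2}\asymp|\mu|^{2\alpha+1}$ is not uniform in $\lambda$. I would first try to work in the weighted form and absorb the weight, using that $|\mu|^{2\alpha+1}$ is essentially constant on intervals of fixed length once $|\mu|$ is large. If that fails, I would state the inequality with the weight $|C_{\alpha,\beta}(\lambda\csc\theta)|^{-2}$ on the left, and use the weighted sampling inequality $\sum_n|f(\lambda_n)|^2w(\lambda_n)$ accordingly.

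\emph{Step 2 (eigenvalue bounds).} I would repeat the proofs of Lemmas \ref{sampling_eigenvalue} and \ref{interpolation_eigenvalue} verbatim, with $A_{\alpha,\beta}(\eta)\,d\eta$ replaced by $d\sigma_{\alpha,\beta}(\eta\csc\theta)$. For a separated sampling set, the subspace vanishing on $\Lambda\cap I^+$ gives $\lambda_{n(I^+)}(I,S)\le\gamma<1$. For a separated interpolation set, the bounded interpolation operator (obtained by the closed-graph argument of Proposition \ref{interpolation_prop}) applied to data supported in $I^-$ gives $\lambda_{n(I^-)-1}(I,S)\ge\delta>0$.

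\emph{Step 3 (counting).} For sampling, with $\gamma<\delta'<1$ and $N_{\delta'}=\#\{k:\lambda_k>\delta'\}$, I would combine
$$n(I^+)\ge N_{\delta'}\ge\operatorname{Tr}\mathbf L-\frac{1}{1-\delta'}\operatorname{Tr}(\mathbf L-\mathbf L^2)=\frac{2^{1-2\rho}}{\pi}\Omega R\csc\theta-o(R).$$
For interpolation, I would use
$$n(I^-)\le N_\delta\le\operatorname{Tr}\mathbf L+\frac1\delta\operatorname{Tr}(\mathbf L-\mathbf L^2)=\frac{2^{1-2\rho}}{\pi}\Omega R\csc\theta+o(R).$$
Replacing $R$ by $R\mp d$ changes these counts by $O(1)\cdot d$, which is $o(R)$. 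Dividing by $R$ and taking $\liminf$ or $\limsup$ then yields both density inequalities.
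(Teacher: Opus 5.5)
Your Steps 2 and 3 coincide with the paper's proof. That proof transfers the min--max arguments of Lemmas \ref{sampling_eigenvalue} and \ref{interpolation_eigenvalue} to $PW^{\theta}_{\alpha,\beta}(S)$ and then counts with Lemma \ref{4.11_Lemma} and the trace-defect hypothesis. The transfer rests entirely on your Step 1, the spectral-variable analogue of Proposition \ref{sampling}, and Step 1 is a genuine gap. The non-uniform weight is not the real obstruction.

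A sub-mean-value inequality bounds $|F(\mu)|^2$ only by the area integral of $|F|^2$ over a disc. To come back to a short real segment you would need a \emph{local} Plancherel--P\'olya inequality comparing $\int|F(t+iy)|^2\,dt$ with $\int|F(t)|^2\,dt$ over short segments. Only the global inequality over $\mathbb R$ exists; it gives Bessel-type bounds for separated sets, not localization.

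In fact the claimed estimate is false. Suppose that, for some $\lambda_0$ with $|\lambda_0|>d$ and $J=[\lambda_0-d,\lambda_0+d]$, one had $|f(\lambda_0)|^2\le C\int_J|f|^2\,d\sigma_{\alpha,\beta}(\eta\csc\theta)$ on $PW^{\theta}_{\alpha,\beta}(S)$. The Riesz representation theorem would then give $h$ on $J$ with
\[
\mathcal G^{\theta}_{\alpha,\beta}(-x,\lambda_0)=\int_J\mathcal G^{\theta}_{\alpha,\beta}(-x,\eta)\,\overline{h(\eta)}\,d\sigma_{\alpha,\beta}(\eta\csc\theta),\qquad |x|\le\Omega .
\]
Both sides are real-analytic in $x$, so this identity would hold for all $x\in\mathbb R$. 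But as $|x|\to\infty$ the right-hand side is $o(e^{-\rho|x|})$ by the Harish--Chandra expansion and the Riemann--Lebesgue lemma, while the left-hand side is not. The Euclidean analogue is the failure of $|f(0)|^2\le C\int_{-d}^{d}|f|^2$ on the classical Paley--Wiener space. Consequently, the bounds $\lambda_{n(I^+)}(I,S)\le\gamma$ and $\lambda_{n(I^-)-1}(I,S)\ge\delta$ in your Step 2 are not established.

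What Landau's method actually needs is a tail estimate with a small error term and with $C$ independent of $R$:
\[
\sum_{|\lambda_n|>R+r}|f(\lambda_n)|^2\le C\int_{|\eta|>R}|f(\eta)|^2\,d\sigma_{\alpha,\beta}(\eta\csc\theta)+\varepsilon(r)\|f\|^2,\qquad \varepsilon(r)\to0\ \ (r\to\infty).
\]
You also need the companion bound of $\sum_{|\lambda_n|\le R-r}|f(\lambda_n)|^2$ by $C\int_{|\eta|\le R}|f|^2\,d\sigma_{\alpha,\beta}(\eta\csc\theta)+\varepsilon(r)\|f\|^2$. Fix $r$ large enough that $\varepsilon(r)$ is below the sampling constant, respectively below $1/K_\Lambda$. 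Your Step 2 argument then gives $\lambda_{n(I_r^+)}(I,S)\le\gamma<1$ and $\lambda_{n(I_r^-)-1}(I,S)\ge\delta>0$, where $I_r^{\pm}=[-R\mp r,R\pm r]$. Separation gives $n(I_r^{\pm})=n(I)+O(r)$, so your Step 3 is unaffected.

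The natural source of such a bound is a smoothed reproducing formula. Take $\chi\in C_c^\infty(\mathbb R)$ with $\chi=1$ on $S$, so that $f=\mathcal H^{\theta}_{\alpha,\beta}\bigl(\chi\,\mathcal T^{\theta}_{\alpha,\beta}f\bigr)$ is an integral operator in the spectral variable. One must show that its kernel, normalized by the Plancherel weight, decays rapidly in $|\lambda-\eta|$, and then apply Schur's test. For the decay, the Lagrange identity moves the cutoff onto $\chi'$, which is supported away from $x=0$, where the Jacobi functions have oscillatory asymptotics. This off-diagonal decay lemma is the missing ingredient, in your proposal and equally in the paper.

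Finally, your fallback of weighting the sampling sum by $w(\lambda_n)$ changes the definition of sampling set, and hence the theorem. Note, though, that $|f(\lambda)|^2\lesssim(1+|\lambda|)^{-2\alpha-1}\|f\|^2$ on $PW^{\theta}_{\alpha,\beta}(S)$; this is the size of $\int_{-\Omega}^{\Omega}|G^{\alpha,\beta}_{\lambda\csc\theta}(x)|^2A_{\alpha,\beta}(x)\,dx$. So under the paper's unweighted definitions:
\begin{itemize}
\item stable interpolation forces $\Lambda$ to be bounded;
\item testing on normalized reproducing kernels at large $|\lambda|$, no separated set satisfies the lower sampling inequality.
\end{itemize}
Even the nodes $s_j$ of the sampling theorem satisfy it only with weights $\asymp|s_j|^{2\alpha+1}$. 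The weighted normalization is therefore the one in which the statement has content.
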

\begin{proof}
By the finite-window spectral duality described above, the same min--max argument used in Lemmas \ref{sampling_eigenvalue} and \ref{interpolation_eigenvalue} applies to $PW_{\alpha,\beta}^{\theta}(S)$. Let \(d>0\) be such that the points of \(\Lambda\) are separated by at least \(2d\), and set
$I_+=[-R-d,R+d]$, $I_{-}=[-R+d,R-d]$. Hence, there exists $0<\gamma<1$, independent of $R$, such that
$$ N_\delta(I)\le n(I^+),\quad \gamma<\delta<1, $$
where
$$ N_\delta(I)=\#\{k:\lambda_k(I,S)>\delta\}. $$
Since $0\le\lambda_k(I,S)\le1$,
$$ N_\delta(I) \ge \operatorname{Tr}(\mathcal L_{I,S}^{\theta}) - \frac{1}{1-\delta} \operatorname{Tr}\left( \mathcal L_{I,S}^{\theta} - (\mathcal L_{I,S}^{\theta})^2 \right). $$
By Lemma \ref{4.11_Lemma} and the assumed trace-defect estimate,
$$ N_\delta(I) \ge \frac{2^{1-2\rho}}{\pi}\Omega R\csc\theta+o(R). $$
Therefore,
$$ n(I^+) \ge \frac{2^{1-2\rho}}{\pi}\Omega R\csc\theta+o(R). $$
Since enlarging $I=[-R,R]$ by a fixed amount does not affect the limiting density, we obtain
$$ \liminf_{R\to\infty} \frac{n([-R,R])}{R} \ge \frac{2^{1-2\rho}}{\pi}\Omega\csc\theta. $$
For interpolation, the corresponding min--max argument gives, for some fixed $0<\delta<1$,
$$ n(I^-)\le N_\delta(I). $$
Moreover,
$$ N_\delta(I) \le \operatorname{Tr}(\mathcal L_{I,S}^{\theta}) + \frac1{\delta} \operatorname{Tr}\!\left( \mathcal L_{I,S}^{\theta} - (\mathcal L_{I,S}^{\theta})^2 \right). $$
Again using Lemma \ref{4.11_Lemma} and the hypothesis,
$$ n(I^-) \le \frac{2^{1-2\rho}}{\pi}\Omega R\csc\theta+o(R).$$
Shrinking $I$ by a fixed amount does not affect the limiting density, and consequently
$$ \limsup_{R\to\infty} \frac{n([-R,R])}{R} \le \frac{2^{1-2\rho}}{\pi}\Omega\csc\theta. $$
This proves both assertions.
\end{proof}

\section{Conclusion}
We developed a sampling framework for the FrOC transform. Using the Sturm--Liouville structure of Jacobi functions, we obtained a Riesz-basis representation and an explicit sampling formula whose nodes are determined by zeros of a shifted Jacobi function. We also established Landau-type necessary density conditions for sampling and interpolation through concentration-operator estimates. For $\theta=\pi/2$, the results reduce to the corresponding non-fractional Opdam--Cherednik setting. In future work, we will study sufficient density conditions and characterize complete sampling and interpolation sets for the FrOC Paley--Wiener spaces. We will also study quantitative stability of the sampling expansion under perturbations of the sampling nodes and measurement noise in the FrOC setting.

\section*{Acknowledgments}
The second author is partially supported by the XJTLU Research Development Fund (RDF-23-01-027).

\section*{Conflict of Interest}
The authors declare that they have no conflicts of interest.

\section*{Data Availability}
Data sharing is not applicable to this article as no new data were created or analyzed in this study.


\end{document}